\title[Topology of hyperbolic $\mathbb{R}^n$-actions on $n$-manifolds]{Some remarks on the topology of hyperbolic actions of $\mathbb{R}^n$ on $n$-manifolds}
\author{Damien Bouloc}
\address{Institut de Math\'ematiques de Toulouse \\ 118, route de Narbonne \\ 31062 Toulouse Cedex 9, France }
\email{damien.bouloc@math.univ-toulouse.fr}
\date{\today}
\newtheorem{theorem}{Theorem}
\newtheorem{corollary}{Corollary}
\newtheorem{proposition}{Proposition}
\newtheorem{lemma}{Lemma}
\theoremstyle{remark}
\newtheorem{remark}{Remark}
\theoremstyle{definition}
\newtheorem{definition}{Definition}
\newcommand*{\Hcal}{\mathcal{H}}
\newcommand*{\Ocal}{\mathcal{O}}
\newcommand*{\Scal}{\mathcal{S}}
\newcommand*{\complexset}{\mathbb{C}}
\newcommand*{\realset}{\mathbb{R}}
\newcommand*{\integerset}{\mathbb{Z}}
\newcommand*{\modintegerset}[1]{\mathbb{Z}/{#1}\mathbb{Z}}
\newcommand*{\modclass}[1]{\overline{#1}}
\newcommand*{\projspace}[1]{\realset\mathbb{P}^{#1}}
\newcommand*{\slope}[2]{[#1 : #2]}
\newcommand*{\sphere}[1]{S^{#1}}
\newcommand*{\closeddisk}{\overline{D^2}}
\newcommand*{\torus}[1]{\mathbb{T}^{#1}}
\newcommand*{\lensspace}[2]{L(#1;#2)}
\newcommand*{\set}[1]{\left\lbrace #1 \right\rbrace}
\newcommand*{\closure}[1]{\overline{#1}}
\newcommand*{\boundary}[1]{\partial #1}
\newcommand*{\abs}[1]{\left| #1 \right|}
\newcommand*{\partdiff}[2]{\frac{\partial #1}{\partial #2}}
\newcommand*{\pair}[2]{\set{#1, #2}}
\newcommand*{\norm}[1]{\| #1 \|}
\newcommand*{\Norm}[1]{\left \| #1 \right \|}
\newcommand*{\dd}{\mathrm{d}}
\newcommand*{\e}{\mathrm{e}}
\DeclareMathOperator{\card}{card}
\DeclareMathOperator{\Ind}{Ind}
\begin{document}
	
\begin{abstract}
	This paper contains some more results on the topology of a nondegenerate action of $\realset^n$ on a compact connected $n$-manifold $M$ when the action is totally hyperbolic (i.e. its toric degree is zero). We study the $\realset$-action generated by a fixed vector of $\realset^n$, that provides some results on the number of hyperbolic domains and the number of fixed points of the action. We study with more details the case of the 2-sphere, in particular we investigate some combinatorial properties of the associated 4-valent graph embedded in $\sphere{2}$. We also construct hyperbolic actions in dimension 3, on the sphere $\sphere{3}$ and on the projective space $\projspace{3}$.
\end{abstract}

\keywords{topology of integrable systems, hyperbolic action}
\subjclass[2010]{37J35, 37C85}
	
\maketitle

\tableofcontents

\section{Introduction}

In the theory of dynamical systems, integrability appears as a natural condition since it is satisfied by many physical systems, from molecular dynamics to celestial mechanics. Integrable Hamiltonian systems have the particularity to have all locally the same normal form, given by Arnold--Liouville--Mineur theorem, and the latter result has motivated the geometric study of such systems. The research on the subject is still active and of great interest (see e.g. \cite{fomenko2014algebra,marco2013polynomial,vungoc2013}), and a lot of work has be done already on these systems, e.g. on their global topology or geometry, their invariants or their singularities (\cite{babelon2003introduction,bolsinov2004integrable,bolsinov2006singularities,zung1996symplectic} to give a few examples). However, there are also physical systems that are non-Hamiltonian because of the existence of non-holonomic constraints or dissipation phenomenon, but are still integrable in a natural sense. These systems have also been studied from various points of view (see \cite{ayoul2010galoisian,bogoyavlenskij1998extended,bolsinov2014singularities,bountis1984complete,cushman2001non,fedorov2006quasi,ishida2013topological,stolovitch2000singular}) but less is know yet about them compared with the widely developed theory of Hamiltonian systems. Formally, a non-Hamiltonian integrable system is defined by $p$ commuting vector fields and $q$ functions on a manifold of dimension $p+q$. As in the Hamiltonian case, we can impose conditions on the singularities of the system and define the notion of non-degenerate singularity. This way we can restrict our study to systems that are not totally regular, but still reasonable enough to expect some global information on the ambient manifold. Minh and Zung \cite{zung2014geometry} initiated a detailed study of integrable non-Hamiltonian systems with non-degenerate singularities by working on the case $(p,q)=(n,0)$, which is of particular importance because any integrable system is actually of this kind when restricted to adequate submanifolds. In this case the system is nothing more than an action of $\realset^n$ on a $n$-dimensional manifold. Because of isotropy, the action of $\realset^n$ descends to an action of $\torus{k} \times \realset^{n-k}$ on the manifold, and the maximal $k$ satisfying this property is called the toric degree of the action. When it is equal to zero we have what is called a (totally) hyperbolic action. Hyperbolic actions form an interesting subclass of integrable systems, admitting for example a global classification based on the decomposition of the manifold into its $n$-dimensional orbits. The aim of this paper is to provide some more results on totally hyperbolic actions of $\realset^n$ on compact connected $n$-manifolds.

In Section~\ref{s:preliminaries}, we summarize without proofs what is known about totally hyperbolic actions of $\realset^n$ on $n$-manifolds. Further details can be found in the fifth section of \cite{zung2014geometry}. In Section~\ref{s:Morse_etc}, we study the flows generated by restricting the action to a direction in $\realset^n$, and we prove the existence of Morse functions whose singularities are analogous to the singularities of such a flow. In Section~\ref{s:number_of_domains}, we investigate some properties satisfied by the number of hyperbolic domains of a totally hyperbolic action. Section~\ref{s:on_the_2_sphere} is devoted to a more detailed study of hyperbolic actions on the 2-sphere. In this case the decomposition into hyperbolic domains can be seen as the embedding in the sphere of a 4-valent graph, and we investigate some combinatorial conditions it has to satisfy. Finally, Section~\ref{s:in_dimension_3} presents a construction of hyperbolic actions on the 3-dimensional sphere, that extends then naturally to the projective space $\projspace{3}$.

\textbf{Acknowledgments} The author wishes to thank his advisors Philippe Monnier and Nguyen Tien Zung for several helpful comments and many stimulating conversations.

\section{Preliminaries}
\label{s:preliminaries}

In this section, we recall the definition of totally hyperbolic actions and their classification by complete fans described by Zung and Minh in~\cite{zung2014geometry}.

\subsection{Nondegenerate action and toric degree}
Let $\rho : \realset^n \times M \rightarrow M$ be a smooth action of $\realset^n$ on a compact connected $n$-dimensional manifold $M$. It is generated by $n$ commuting vector fields $X_1, \dots, X_n$, defined by
\[ X_i(p) =  \left. \frac{d}{dt} \rho(t e_i, p) \right|_{t=0} \]
for all $p \in M$, where $e_i = (0, \dots, 1, \dots, 0)$ is the $i$-th vector in the canonical basis of $\realset^n$. More generally, if $v = (v^1, \dots, v^n)$ is a vector in $\realset^n$, we define the \emph{the generator of the action $\rho$ associated to $v$} as the vector field
\[ X_v = v^1 X_1 + \cdots + v^n X_n. \]
Recall that the \emph{rank} of a point $p \in M$ is defined as the dimension $r$ of the subspace of $T_p M$ spanned by $X_1(p), \dots, X_n(p)$. The point is said to be \emph{singular} if $r < n$, in particular it is a \emph{fixed point} if $r = 0$.

If $p$ is a fixed point of the action, then the linear parts $X_i^{(1)}$ of the vectors fields $X_i(p)$ at $p$ form a family of commuting well-defined linear vector fields on the tangent space $T_p M$. The linear action induced on the $n$-dimensional vector space $T_p M$ is called the \emph{linear part} of the action $\rho$ at $p$ and is denoted by $\rho^{(1)}$. This linear action is \emph{nondegenerate} if the Abelian Lie algebra spanned by $X_1^{(1)}, \dots, X_n^{(1)}$ is a Cartan sub-algebra of $\mathfrak{gl}(T_p M)$ (that is to say it has dimension $n$ and all of its elements are semi-simple). In this case, we say that the point $p$ is a \emph{nondegenerate} fixed of point of the action.

This definition of non-degeneracy extends to non-fixed singular points as follows. Suppose $p$ has rank $0 < r < n$. Without loss of generality, we may suppose that $X_1(p) = \cdots = X_{k}(p) = 0$ and $X_i = \frac{\partial}{\partial x_i}$ for all $k < i \leq n$ in a local coordinate system around $p$, where $k = n - r$ is the \emph{co-rank} of $p$. The projections of $X_1, \dots, X_k$ induce an infinitesimal action $\tilde{\rho}$ of $\realset^k$ on the local $k$-dimensional manifold $N = \set{x_{k+1} = \cdots = x_n = 0}$. We say that $p$ is \emph{nondegenerate} if its image in $N$ is a nondegenerate fixed point of $\tilde{\rho}$. The action $\rho$ is nondegenerate if all of its singular points are nondegenerate in the above sense.

Denote by 
\[ Z = \set{g \in \realset^n \mid g \cdot p = p \text{ for all } p \in M} \]
the isotropy group of the action $\rho$ on $M$. When $\rho$ is nondegenerate, one can show that it is locally free almost everywhere, and then $Z$ is a discrete subgroup of $\realset^n$. The classification of such subgroups tells us that $Z$ is then isomorphic to $\integerset^t$ for some $0 \leq t \leq n$. The integer $t$ is called the \emph{toric degree} of the action $\rho$. It can be interpreted as the maximal number $t$ such that the action $\rho$ descends to an action of $\torus{t} \times \realset^{n-t}$ on $M$.

We say that the nondegenerate action $\rho$ is \emph{totally hyperbolic} if its toric degree is zero, i.e. if $\rho$ is faithful.

\subsection{Local normal form of totally hyperbolic actions} In a neighborhood of any point of a non-degenerate action $\rho : \realset^n \times M \rightarrow M$, there is a normal form. Let us recall what it looks like when $\rho$ is totally hyperbolic.

Suppose $p \in M$ is a point of rank $r = n - h$ of a totally hyperbolic action $\rho$ of $\realset^n$ on $M$. Then there exists local coordinates $(x_1, \dots, x_n)$ around $p$ and a basis $(v_1, \dots, v_n)$ of $\realset^n$ such that the generators of the action associated to the elements of this basis have the form:
\[ X_{v_1} = x_1 \partdiff{}{x_1}, \dots, X_{v_h} = x_h \partdiff{}{x_h},
\quad
X_{v_{h+1}} = \partdiff{}{x_{h+1}}, \dots, X_{v_n} = \partdiff{}{x_n}, \]
and the $r$-dimensional orbit containing $p$ is locally defined by the equations $\set{x_1 = \dots = x_h = 0}$.
We say that $(x_1, \dots, x_n)$ are \emph{canonical coordinates} and that $(v_1, \dots, v_n)$ is an \emph{adapted basis} of the action $\rho$ around the point $p$.

If $(y_1, \dots, y_n)$ is another system of canonical coordinates with adapted basis $(w_1, \dots, w_n)$, then one can show that the family $(w_1, \dots, w_h)$ is a permutation of $(v_1, \dots, v_h)$. Conversely, if $(w_1, \dots, w_n)$ is a basis of $\realset^n$ such that $(w_1, \dots, w_h)$ is a permutation of $(v_1, \dots, v_h)$, then $(w_1, \dots, w_n)$ is an adapted basis corresponding to some canonical coordinates $(y_1, \dots, y_n)$.

In particular, if $h = 1$, that is on the neighborhood of a point $p$ on a $(n-1)$-dimensional orbit, there is a unique $v_1 \in \realset^n$ such that we have locally $X_{v_1} = x_1 \partdiff{}{x_1}$ in some local coordinates $(x_1, \dots, x_n)$. We say that $v_1$ is the vector associated to the orbit containing $p$.

\subsection{Classification of totally hyperbolic actions}
Suppose $\rho$ is a totally hyperbolic action of $\realset^n$ on a $n$-dimensional compact connected manifold $M$. Its $n$-dimensional orbits are called \emph{hyperbolic domains}.

If $\Ocal$ is a hyperbolic domain of $\rho$, then its closure $\closure{\Ocal}$ is a contractible manifold with boundary and corners, and admits a cell decomposition where each $k$-dimensional cell is a $k$-dimensional orbit of $\rho$. It is a ``curved polytope'' in the sense that it is very similar, but not necessarily diffeomorphic, to a simple convex polytope in $\realset^n$. For any $p \in \Ocal$ and $w \in \realset^n$, the curve $\rho(-tw, p)$ converges to a point in $\closure{\Ocal}$ when $t$ tends to $+\infty$. By commutativity, the orbit of $\rho$ in which this limit lies does not depend on the point $p \in \Ocal$. Thus we can decompose $\realset^n$ into sets
\[ C_{\Hcal} = \set{w \in \realset^n \mid \lim\limits_{t \to \infty} \rho(-tw, p) \in \Hcal \text{ for any } p \in \Ocal} \]
indexed by the orbits $\Hcal \subset \closure{\Ocal}$. Each $\closure{C_\Hcal}$ is a convex cone in $\realset^n$ with simplicial base and dimension $n - \dim \Hcal$. In particular when $\Hcal = \Ocal$ we have $C_\Ocal = \set{0}$, and when $\Hcal = \set{p}$ is a fixed point, $C_{\set{p}}$ is a $n$-dimensional cone. If $\Hcal$ is $(n-1)$-dimensional orbit, then $C_{\Hcal}$ is precisely the one-dimensional cone spanned by the vector $v \in \realset$ associated to the orbit $\Hcal$ that we define above.

\begin{lemma}
	\label{l:gluing_n-1_dim_orbits}
	The $(n-1)$-dimensional orbits of a totally hyperbolic action $\rho : \realset^n \times M \rightarrow M$ can be glued into smooth closed hypersurfaces $H_1, \dots, H_N$ which intersect transversally, and such that two $(n-1)$-orbits lying in a same hypersurface $H_i$ have the same associated vector $v_i \in \realset^n$. 
\end{lemma}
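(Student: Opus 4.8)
The plan is to assemble the hypersurfaces by resolving the singular locus into its smooth local branches, using the associated vectors as globally consistent labels. Write $\Sigma \subset M$ for the union of all orbits of dimension at most $n-1$; since it is closed in the compact manifold $M$, it is compact. The first step is to record the local picture. Around a singular point $q$ of co-rank $h$, the normal form recalled above gives canonical coordinates $(x_1, \dots, x_n)$ with adapted basis $(v_1, \dots, v_n)$ and $X_{v_i} = x_i \partdiff{}{x_i}$ for $1 \le i \le h$, so that on a chart $U_q$ one has $\Sigma \cap U_q = \bigcup_{i=1}^{h} \set{x_i = 0}$, a transverse union of $h$ smooth coordinate hyperplanes. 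On the sheet $\set{x_i = 0}$ the points with the remaining $x_j$ ($j \le h$) nonzero lie on $(n-1)$-orbits, and each such orbit has associated vector $v_i$ by the $h=1$ case of the normal form. Crucially, $v_1, \dots, v_h$ are linearly independent, so the $h$ local sheets through $q$ carry \emph{pairwise distinct} associated vectors.

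The second step is to glue these branches globally. I would introduce the branch space $\widehat{\Sigma} = \set{(q, S) : q \in \Sigma, \ S \text{ a maximal smooth hypersurface-germ of } \Sigma \text{ at } q}$, with its natural projection $\pi : \widehat{\Sigma} \to \Sigma$. The charts above identify $\pi^{-1}(U_q)$ with the disjoint union of the $h$ sheets $\set{x_i = 0}$, which shows that $\widehat{\Sigma}$ is a smooth $(n-1)$-manifold on which $\pi$ is a local embedding with $\card \pi^{-1}(q)$ equal to the co-rank of $q$; properness of $\pi$ and compactness of $\Sigma$ make $\widehat{\Sigma}$ compact, hence with finitely many components. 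The associated vector then defines a map $\widehat{\Sigma} \to \realset^n$ which is locally constant: it equals $v_i$ everywhere on a sheet $\set{x_i = 0}$, in particular it does not change when one crosses a lower stratum lying inside that sheet, and on overlaps the branches (being intrinsic subsets of $\Sigma$) match together with their vectors. Thus the vector is constant on each connected component $\widehat{\Sigma}_1, \dots, \widehat{\Sigma}_N$, and I define $H_i = \pi(\widehat{\Sigma}_i)$ and let $v_i$ denote the common value on $\widehat{\Sigma}_i$.

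The third step verifies the three assertions. Each $H_i$ is a smooth embedded closed hypersurface once $\pi|_{\widehat{\Sigma}_i}$ is injective: if $a \ne a'$ in $\widehat{\Sigma}_i$ satisfied $\pi(a) = \pi(a') = q$, then $a, a'$ would be distinct local branches through $q$, hence would carry distinct associated vectors by the linear independence above, contradicting that both equal $v_i$; properness then upgrades the injective immersion to a closed embedding of a boundaryless compact manifold. If $H_i$ and $H_j$ meet at $q$ for $i \ne j$, they arise from distinct local sheets $\set{x_a = 0}$, $\set{x_b = 0}$ with $a \ne b$, whose conormals $dx_a, dx_b$ are independent, so the intersection is transverse (and any collection meeting at $q$ is in normal-crossing position). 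Finally, every $(n-1)$-orbit of $H_i$ lies on a sheet labelled $v_i$, which is exactly the last claim.

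The step I expect to be the main obstacle is making the branch space $\widehat{\Sigma}$ precise and checking that it is a genuine smooth manifold on which the labelling by associated vectors is well defined and locally constant. The decisive ingredient throughout — securing simultaneously the global consistency of the gluing, the injectivity of $\pi$ on each component (no self-intersections of a single $H_i$), and the transversality of distinct $H_i$ — is the linear independence of the adapted basis vectors $v_1, \dots, v_h$ at each singular point, which guarantees that two distinct branches never share an associated vector.
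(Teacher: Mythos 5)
Your overall strategy is sound and, in its global packaging, genuinely different from the paper's. The paper glues only along codimension-two strata: it observes that the closures of two $(n-1)$-orbits meet (if at all) along the closure of an $(n-2)$-orbit, performs the gluing there, and lets connectedness propagate the associated vector along each resulting hypersurface. Your normalization via the branch space $\widehat{\Sigma}$ treats all coranks at once and buys two things the paper leaves implicit: that each $H_i$ is embedded (no self-intersections, since two distinct branches at a point carry linearly independent, hence distinct, adapted-basis vectors), and that the whole family is in normal-crossing position. Both arguments ultimately rest on the same two facts: the local normal form exhibits the singular set as a transverse union of sheets $\set{x_i=0}$, and the sheet $\set{x_i=0}$ is labelled by $v_i$.

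However, the one assertion that carries all the weight --- ``each such orbit has associated vector $v_i$ by the $h=1$ case of the normal form'' --- is exactly the step the paper actually proves, and your one-line justification does not suffice as stated. The $h=1$ normal form at a nearby corank-one point $q \in \set{x_i = 0}$ guarantees that $q$ has a \emph{unique} associated vector; it does not by itself identify that vector as $v_i$. To identify it you must produce canonical coordinates centered at $q$ in which $X_{v_i}$ is the hyperbolic generator and the remaining basis vectors generate coordinate fields. The paper does this by translating the coordinates to be centered at $q$ (so that for the other indices $j \le h$ the generator becomes $X_{v_j} = (y_j + c_j)\,\partial/\partial y_j$ with $c_j \neq 0$, hence nonvanishing) and then rectifying those generators by a change of coordinates that leaves the $i$-th coordinate untouched, so that $X_{v_i} = x_i\,\partial/\partial x_i$ survives and $(v_1, \dots, v_n)$ remains an adapted basis at $q$ with $v_i$ in the hyperbolic slot. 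You need this (or an equivalent) argument; once it is inserted, both the local constancy of your labelling on $\widehat{\Sigma}$ and the injectivity of $\pi$ on each component are justified, and the rest of your proof goes through. The remaining point you flag yourself --- Hausdorffness, compactness and smooth structure of $\widehat{\Sigma}$ --- is routine given the finitely many normal-form charts.
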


\begin{proof}
	The closures of two $(n-1)$-dimensional orbits are either disjoint, or intersect along the closure of a $(n-2)$-dimensional orbit. Let $p \in M$ be a point of rank $n-2$, with canonical coordinates $(x_1, \dots, x_n)$ and adapted basis $(v_1, \dots, v_n)$. The orbit $\Ocal_p$ containing $p$ is locally defined by $\set{x_1 = x_2 = 0}$. It lies in the closure of exactly four $(n-1)$-dimensional orbits $\Ocal_1^+$, $\Ocal_1^-$, $\Ocal_2^+$, $\Ocal_2^-$, where $\Ocal_i^\pm$ is the orbit locally defined by $\set{x_i = 0, \pm x_j > 0}$ (with $j$ such that $\set{1,2} = \set{i,j}$). It follows that $\Ocal_i^+$ and $\Ocal_i^-$ can be glued along $\Ocal_p$ into a smooth manifold defined locally by the equation $\set{x_i = 0}$. Now it remains to show that the vector associated to $\Ocal_i^\pm$ is precisely $v_i$. We do this for $\Ocal_1^+$, the proof is similar in the other cases. Using a change of coordinates $y_2 = x_2 - \varepsilon$, $y_i = x_i$ for $i \neq 2$, we obtain new local coordinates centered at $q \in \Ocal_1^+$ in which we have $X_{v_2} = (y_2 + \varepsilon) \partdiff{}{y_2}$ while the expressions of the other generators remain unchanged, and then in particular do not depend on $y_2$. It follows that $X_{v_2}$ is rectifiable around $q$ by a change of coordinates that preserves the $y_i$ for $i \neq 2$: we have new coordinates $(z_1, \dots, z_n)$ in which
	\[ X_{v_1} = z_1 \partdiff{}{z_1}, X_{v_2} = \partdiff{}{z_2}, \dots, X_{v_n} = \partdiff{}{z_n}. \]
	It follows that $(z_1, \dots, z_n)$ are canonical coordinates around $q \in \Ocal_1^+$ with the same adapted basis $(v_1, \dots, v_n)$ as for $p$. In particular, that implies that $v_1$ is the vector associated to the orbit $\Ocal_1^+$.
\end{proof}

Let $(H_i)_{1 \leq i \leq N}$ be the family of embedded closed hypersurfaces in $M$ given by the above lemma, with associated vectors $v_1, \dots, v_N$. Fix a hyperbolic domain $\Ocal$ of the action. For each hypersurface $H_i$ there is at most one $(n-1)$-dimensional orbit $\Hcal_i$ in $\closure{\Ocal}$ such that $\Hcal_i \subset H_i$. The family $(C_{\Hcal}, v_i)$ indexed by the orbits $\Hcal \subset \closure{\Ocal}$ and the $i \in \set{1, \dots, N}$ such that $H_i \cap \closure{\Ocal} \neq \emptyset$ defines a \emph{complete fan} of $\realset^n$. If $\Ocal'$ is the hyperbolic domain of another totally hyperbolic action $\rho'$ on $M$ with the same complete fan in $\realset^n$, then there exists a diffeomorphism between $\closure{\Ocal}$ and $\closure{\Ocal'}$ that intertwines the actions $\rho$ and $\rho'$. It follows that hyperbolic actions are classified by their singular hypersurfaces and complete fans.

Conversely, let $M$ be a manifold and $H_1, \dots, H_N$ be embedded closed hypersurfaces that intersect transversely, such that $H_1, \dots, H_N$ split $M$ into compact connected ``curved polytopes'' $\Ocal_1, \dots, \Ocal_F$. Suppose there exists a family $(v_1, \dots, v_N)$ of vectors in $\realset^n$ spanning a complete fan, and such that for each $\Ocal_j$, the subfamily
\[ \set{v_i \mid H_i \cap \closure{\Ocal_j} \neq \emptyset } \]
spans also a complete fan of $\realset^n$ compatible with the combinatorics of the faces of $\closure{\Ocal}$. Then there exists a totally hyperbolic action $\rho$ of $\realset^n$ on $M$ whose hyperbolic domains are exactly $\Ocal_1, \dots, \Ocal_F$ (and thus the $H_i$ are obtained by taking the closure of the $(n-1)$-dimensional orbits of $\rho$).

\section{$\realset$-action generated by a vector}
\label{s:Morse_etc}

Let $M$ be a compact manifold of dimension $n$ with a totally hyperbolic action $\rho : \realset^n \times M \rightarrow M$. Denote by $v_1, \dots, v_N$ the vectors in $\realset^n$ associated to the invariant hypersurfaces $H_1, \dots, H_N$ given by Lemma~\ref{l:gluing_n-1_dim_orbits}.

Fix a generic $w \in \realset^n$ with respect to the family $v_1, \dots, v_N$, in the sense that $w$ does not lie in any vector subspace generated by $v_{i_1}, \dots, v_{i_k}$ if $k < n$. Denote by $\varphi_w^t = \rho(-tw, .)$ the flow of the action in the direction $w$, that is the flow of the generator $-X_w$.

\begin{figure}
	\centering
	\def\svgwidth{0.9\textwidth}
	\small%
	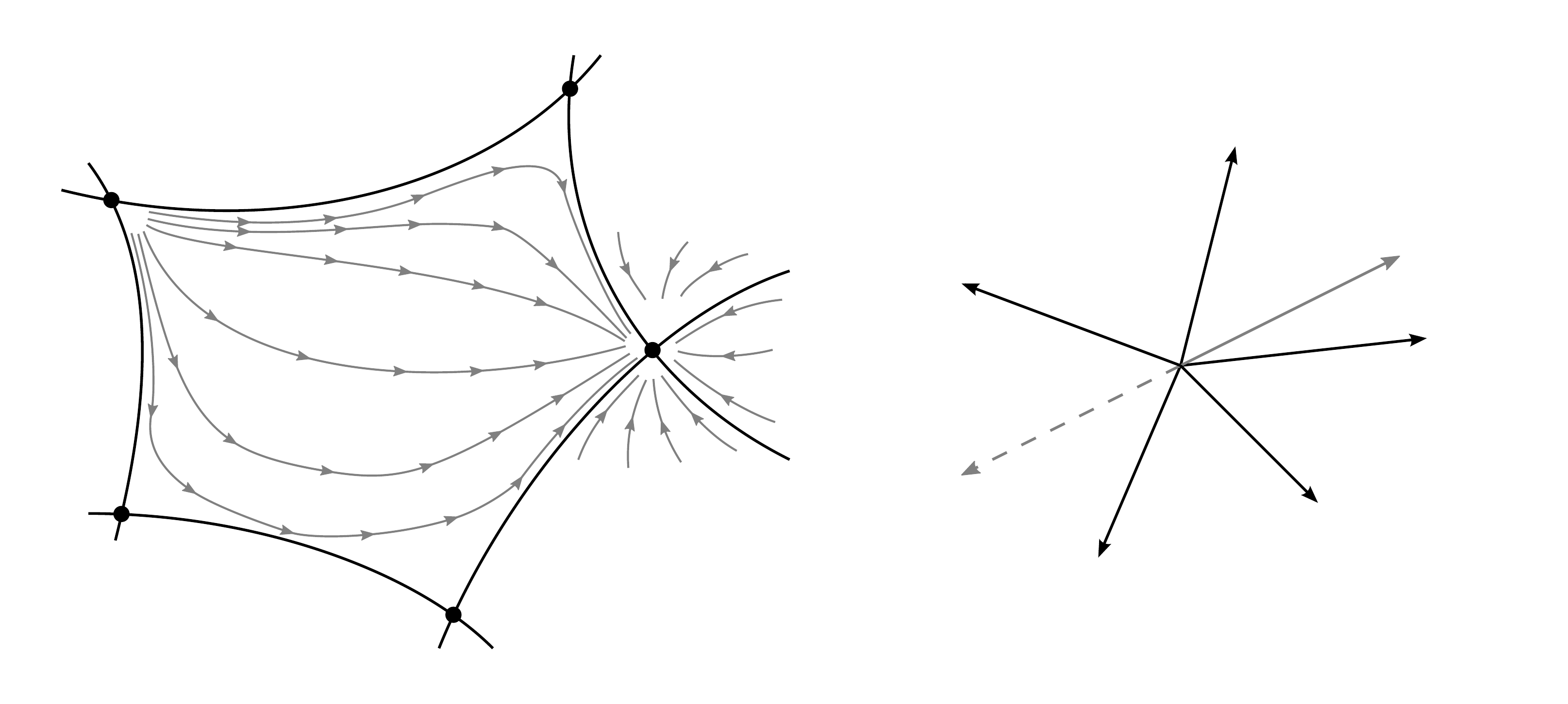
	\caption{$1$-dimensional flow induced by a vector $w \in \realset^n$}
	\label{f:flow_domain}
\end{figure}

Let $\Ocal$ be a hyperbolic domain. Since $w$ is generic, it lies in a $n$-dimensional cone in the decomposition
\[ \realset^n = \bigsqcup_{\Hcal \subset \closure{\Ocal}} C_\Hcal \]
given by the subfamily $v_{i_1}, \dots, v_{i_n}$. By definition of the $C_\Hcal$, for any $p \in \Ocal$ the flow $\varphi_w^t(p)$ tends to a fixed point $p_+ \in \boundary{\Ocal}$ as $t$ tends to $+\infty$. Similarly, $-w$ lies in a $n$-dimensional cone of the complete fan associated to $\Ocal$, and then the flow $\varphi_w^{-t}(p) = \varphi_{-w}^t(p)$ tends to a fixed point $p_- \in \boundary{\Ocal}$ as $t$ tends to $+\infty$ for any $p \in \Ocal$. The fixed points of the flow $\varphi_w^t$ are exactly the fixed points of the action $\rho$. Looking at the dynamics of the flow $\varphi_w^t$ on $\closure{\Ocal}$, we thus observe that the point $p_+$ is attractive, the point $p_-$ is repulsive, and any other fixed point on $\boundary{\Ocal}$ is a saddle point, as illustrated in Figure~\ref{f:flow_domain}.

Remark that the flow $\varphi_w^t$ has the following properties:
\begin{itemize}
	\item there are a finite number of fixed points for $\varphi_w^t$ on $M$,
	\item there are no higher-dimensional closed orbits, and
	\item for any other point $p \in M$,  $\varphi_w^t(p)$ tends to one of the above fixed points as $t$ tends to $+\infty$.
\end{itemize}
However, $\varphi_w^t$ does not satisfy the local structural stability required to be a Morse--Smale flow \cite{smale1960morse}. We will say that $\varphi_w^t$ is the \emph{quasi Morse--Smale flow} of $\rho$ associated to $w$. We can adapt the definition of Morse index to the fixed points of the flow $\varphi_w^t$ as follows.

\begin{definition}
	Let $p$ be a fixed point of the flow $\varphi_w^t$. Let $(v_{i_1}, \dots, v_{i_n})$ be an adapted basis around $p$, with local coordinates $(x_1, \dots, x_n)$. Denote by $(\alpha_1, \dots, \alpha_n) \in \realset^n$ the coordinates of $w$ in this adapted basis. Recall that
	\[ X_w = \alpha_1 x_1 \partdiff{}{x_1} + \cdots + \alpha_n x_n \partdiff{}{x_n}, \]
	or equivalently
	\[ \varphi_w^t(x_1, \dots, x_n) = (x_1 \e^{-\alpha_1 t}, \dots, x_n \e^{-\alpha_n t}). \]
	The \emph{index of $p$ with respect to $w$} is the number
	\[ \Ind_p(w) = \card \set{1 \leq i \leq n \mid \alpha_i > 0} \]
	of attractive directions of $\varphi_w^t$ around $p$.
	
	For instance, $\Ind_p(w)$ is equal to $n$ (res. equal to $0$) if $p$ is an attractive point of $\varphi_w^t$ (res. a repulsive point of $\varphi_w^t$).
\end{definition}

We want to use Morse theory to get information about the number of fixed points of $\rho$ of given index with respect to $w \in \realset^n$. To do so, we would like to construct a Morse function $f : M \rightarrow \realset$ whose singularities are exactly the fixed points of $\rho$, and such that their Morse indices coincide with their indices with respect to $w$. In particular, $f$ has to be increasing in the direction of the flow $\varphi_w^t$, which leads to a first necessary condition on $\varphi_w^t$ for a such function $f$ to exist.

\begin{definition}
	Let $\rho$ be a totally hyperbolic action of $\realset^n$ on a $n$-dimensional manifold $M$, with associated fan $v_1, \dots, v_N$, and let $w \in \realset^n$ be generic with respect to this fan.
	For two fixed points $p_1, p_2$ of $\rho$, write $p_1 \rightarrow p_2$ if there exists a point $p \in M$ (of rank 1) such that
	\[ \lim\limits_{t \to - \infty} \varphi_w^t(p) = p_1 \quad \text{and} \quad \lim\limits_{t \to +\infty} \varphi_w^t(p) = p_2. \]
	We say that the flow $\varphi_w^t$ \emph{has no cycles} if there does not exist a finite sequence $p_1, \dots, p_k$ of fixed points of $\rho$ such that
	\[ p_1 \rightarrow p_2 \rightarrow \cdots \rightarrow p_k \rightarrow p_1. \]
\end{definition}

\begin{lemma}
	\label{l:no_cycles_in_dimension_2}
	If $M$ has dimension $2$, then for any generic $w \in \realset^n$, the flow $\varphi_w^t$ has no cycles.
\end{lemma}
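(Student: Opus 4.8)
The plan is to show that a cycle would force an impossible cyclic chain of inequalities among the associated vectors $v_i$, working entirely inside the fixed plane $\realset^2 = \realset^n$ in which $w$ lives; no global topology of $M$ enters, which is exactly why the statement is special to dimension $2$.

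First I would record the nature of the vertices of a hypothetical cycle $p_1 \rightarrow \cdots \rightarrow p_k \rightarrow p_1$. Each $p_i$ carries an incoming edge (the rank-$1$ orbit realizing $p_{i-1}\rightarrow p_i$) and an outgoing edge (realizing $p_i \rightarrow p_{i+1}$). In the rank-$0$ normal form $X_w = \alpha_1 x_1 \partdiff{}{x_1} + \alpha_2 x_2 \partdiff{}{x_2}$ a half-axis is incoming exactly when its $\alpha_j > 0$ and outgoing when $\alpha_j < 0$, so the presence of both an incoming and an outgoing edge forces $0 < \Ind_{p_i}(w) < 2$, i.e. every $p_i$ is a saddle. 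At such a saddle the two hypersurfaces through $p_i$ split into the stable one (carrying the incoming edge) and the unstable one (carrying the outgoing edge); I write $c_{i-1}$ and $c_i$ for their associated vectors, so that $(c_{i-1}, c_i)$ is the adapted basis at $p_i$ and $c_i$ is the vector of the edge $e_i$ (consistently, at $p_{i+1}$ this same edge is incoming with vector $c_i$).

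The key local computation is the sign of $w$ in this basis. Writing $w = \alpha c_{i-1} + \beta c_i$, the normal form gives $X_w = \alpha y_1 \partdiff{}{y_1} + \beta y_2 \partdiff{}{y_2}$ with stable hypersurface $\set{y_1 = 0}$ and unstable hypersurface $\set{y_2 = 0}$. Restricting $\varphi_w^t$ to $\set{y_1 = 0}$, where $X_{c_{i-1}}$ vanishes, the motion is governed by the \emph{transverse} coefficient $\beta$, and the incoming condition forces $\beta > 0$; symmetrically the outgoing condition on $\set{y_2 = 0}$ forces $\alpha < 0$. Hence $w = \beta c_i + |\alpha|(-c_{i-1})$ with $\beta, |\alpha| > 0$, i.e. $w$ is a positive combination of $c_i$ and $-c_{i-1}$, for every $i$.

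Then I would close the argument by plane linear algebra. Taking the nonzero functional $\ell = \det(w, \cdot)$, which vanishes on $w$, the identity $0 = \ell(w) = \alpha\,\ell(c_{i-1}) + \beta\,\ell(c_i)$ with $\alpha < 0 < \beta$ shows that $\ell(c_{i-1})$ and $\ell(c_i)$ share the same sign; genericity of $w$ (it lies on no line $\realset v_j$) gives $\ell(c_j) \neq 0$, so \emph{all} the $c_j$ lie strictly on one side of the line $\realset w$. Measuring each $c_j$ by its angle $\theta_j \in (0,\pi)$ from $w$ inside that half-plane, membership of $w$ in the positive cone generated by $c_i$ and $-c_{i-1}$ holds precisely when $\theta_i < \theta_{i-1}$ (the short arc from the direction of $c_i$ to that of $-c_{i-1}$ must sweep through the direction of $w$). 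Running this around the cycle yields $\theta_k > \theta_1 > \theta_2 > \cdots > \theta_k$, a contradiction, so no cycle exists. The main obstacle is the middle step: correctly extracting the two signs from the normal form, and in particular noticing that the flow along $\set{y_1 = 0}$ is driven by the coefficient of the transverse generator, so that ``incoming'' and ``outgoing'' translate into $\beta > 0$ and $\alpha < 0$ and not the reverse. Once the cone membership is pinned down, the angle-monotonicity contradiction is immediate, and it is exactly here that dimension $2$ is essential, since one-sidedness of the $c_j$ lets us linearly order their directions with no higher-dimensional analogue.
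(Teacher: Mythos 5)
Your proof is correct, and its heart coincides with the paper's: at each saddle $p_i$ of the cycle you read off from the local normal form that $w$ is a positive combination of the outgoing vector and the negative of the incoming one, which is exactly the paper's relation $w = -b_i v_i + a_i v_{i+1}$ with $a_i, b_i > 0$. You also correctly handle the one genuinely delicate sign issue, namely that the flow along the separatrix contained in the hypersurface of $c_{i-1}$ is driven by the coefficient of the \emph{transverse} generator $c_i$, so that ``incoming'' gives $\beta > 0$ and ``outgoing'' gives $\alpha < 0$. Where you diverge is in how the contradiction is extracted from the resulting cyclic system of cone memberships. The paper writes the relations as a cyclic linear system in the $v_i$ and applies Cramer's rule, computing the cofactor $\delta_1 = \sum a_1 \cdots a_i b_{i+2} \cdots b_n > 0$ to get $\Delta v_1 = \delta_1 w$ and hence a violation of genericity (or $w=0$ if $\Delta = 0$). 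You instead use the functional $\det(w,\cdot)$ to place all the $v_j$ strictly on one side of the line $\realset w$, and observe that each cone membership strictly decreases the angle to $w$, which is impossible around a cycle. Both closings are valid; yours avoids the determinant computation and makes more visible exactly where dimension $2$ is used (the one-sidedness that totally orders the directions), while the paper's is shorter once the system is written down.
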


\begin{proof}
	Suppose there exists a cycle
	\[ p_1 \rightarrow p_2 \rightarrow \cdots \rightarrow p_r \rightarrow p_{r+1} = p_1. \]
	Denote by $H_1, \dots, H_r$ the hypersurfaces in $M$ such that, for each $1 \leq i \leq r$, $p_i$ is the intersection between $H_{i-1}$ and $H_i$, and let $v_1, \dots, v_r$ be the corresponding vectors in the complete fan associated to the action (the indices are considered modulo $r$, and we do not exclude that $H_i = H_j$ for some $i \neq j$). Each $H_i$ contains precisely the codimension 1 orbit $O_i$ such that, for any $p \in O_i$, $\rho_w(t,p)$ tends to $p_i$ as $t$ tends to $-\infty$, and to $p_{i+1}$ as $t$ tends to $+\infty$. It follows that each $p_i$ is simultaneously an attractive point on $O_{i-1}$ and a repulsive point on $O_i$, and then according to the local normal form around $p_i$, $w$ satisfies 
	\[ w = -b_i v_i + a_i v_{i+1}, \quad a_i, b_i > 0.  \]
	So, component-wise, the vectors $(v_1, \dots, v_r)$ satisfy the system:
	\[ \left\lbrace	\begin{array}{llllll}
	-b_1 x_1 &+a_1 x_2 &	     &                &            &= w \\
	&-b_2 x_2 &+a_2 x_3 &                &            &= w \\
	&         & \cdots  &                &            &= w \\
	&         &         &-b_{n-1}x_{n-1} &+a_{n-1}x_n &= w \\
	a_n v_1  &         &         &                &-b_n x_n    &= w
	\end{array} \right. \]
	Cramer's rule implies in particular $\Delta v_1 = \delta_1 w$, where $\Delta$ is the determinant of the system and 
	\[ \delta_1 = \sum_{i=0}^{n-1} a_1\dots a_i b_{i+2}\dots b_n > 0. \]
	It follows that $w$ and $v_1$ are linearly dependent, which contradicts the assumption that $w$ is generic.
\end{proof}

We do not know if this result holds in dimension $3$ or more. So in the rest of this subsection, we will always consider the extra assumption that $\varphi_w^t$ has no cycles.
	
\begin{theorem}
	\label{t:construction_of_Morse_functions}
	Let $\rho : \realset^n \times M  \rightarrow M$ be a totally hyperbolic action of $\realset^n$ on a compact connected $n$-manifold $M$. Let $w \in \realset^n$ be generic with respect to the complete fan associated to $\rho$.
	
	If $\varphi_w^t$ has no cycles, then there exists a Morse function $f : M \rightarrow \realset$ such that $p \in M$ is a singular point of $f$ if and only if it is a fixed point of $\rho$, and satisfies
	\[ \Ind_p(f) = \Ind_p(w), \]
	where $\Ind_p(f)$ is the usual Morse index of $f$ at $p$.
\end{theorem}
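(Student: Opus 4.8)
The plan is to build $f$ from explicit local models at the fixed points and then glue them into a global Lyapunov function for the flow $\varphi_w^t$, the consistency of the gluing being controlled entirely by the no-cycle hypothesis. First the local step. Since $w$ is generic, around a fixed point $p$ all coordinates $\alpha_1, \dots, \alpha_n$ of $w$ in an adapted basis are nonzero (otherwise $w$ would lie in the span of a proper subfamily of the $v_i$), so $p$ is a \emph{hyperbolic} rest point of $\varphi_w^t$. In the canonical coordinates $(x_1, \dots, x_n)$ I set
\[ f_p = c_p + \sum_{\alpha_i < 0} x_i^2 - \sum_{\alpha_i > 0} x_i^2, \]
where $c_p \in \realset$ is a constant fixed below. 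A direct computation gives $-X_w f_p = \sum_i 2\abs{\alpha_i} x_i^2 \geq 0$, with equality only at $p$, so $f_p$ is a nondegenerate quadratic whose unique critical point is $p$, which strictly increases along every nonconstant orbit of $\varphi_w^t$ near $p$, and whose Morse index is $\card\set{i \mid \alpha_i > 0} = \Ind_p(w)$. Thus $f_p$ meets all the local requirements of the theorem; the real problem is to interpolate between these models over the rest of $M$ without creating new critical points and while keeping $-X_w f > 0$ away from the fixed points.

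Second, the combinatorial step that makes such an interpolation possible. The relation $\rightarrow$ generates, by transitivity, a relation on the finite set of fixed points of $\rho$, and the no-cycle hypothesis says exactly that this relation is antisymmetric, hence a strict partial order. I would then choose the constants $c_p$ to be a strictly order-preserving real function on the fixed points (a topological sort of the associated acyclic graph): whenever $p \rightarrow q$ one has $c_p < c_q$. These are the prescribed values of $f$ at the singular points, and they guarantee that along any orbit the target value at the $\omega$-limit fixed point exceeds the value at the $\alpha$-limit fixed point, so that a monotone interpolation is never forced to contradict itself.

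Third, the global construction, which is the main difficulty. I would proceed by induction along the levels $c_{p_1} < \dots < c_{p_m}$, extending the definition of $f$ one fixed point at a time. Fix small flow-adapted neighborhoods $U_p$ on which $f := f_p$. On the complement of $\bigcup_p U_p$ every orbit crosses in finite time from an exit face of some $U_{p_i}$ to an entrance face of some $U_{p_j}$ with $c_{p_i} < c_{p_j}$; using the flow to define a time parameter along these orbit segments, I extend $f$ by interpolating its already-prescribed boundary values and adding a term strictly increasing in the time parameter, and then smooth the result with a partition of unity subordinate to a covering by flow boxes. Monotonicity of $f$ along $\varphi_w^t$ is then built in, and since $-X_w f > 0$ off the $U_p$ the extended function has no critical points other than the fixed points.

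The hard part is precisely this last gluing through the saddle regions: because $\varphi_w^t$ is only quasi Morse--Smale, the stable and unstable manifolds of the fixed points need not meet transversally, so one cannot invoke the standard energy-function theorems for Morse--Smale flows (Smale \cite{smale1960morse}, and the later results of Meyer and Conley) verbatim. The role of the no-cycle condition is to supply, through the order above, the filtration of $M$ by sublevel sets that those theorems would otherwise extract from transversality, reducing the construction to a one-fixed-point-at-a-time extension in which the orientation of the monotone interpolation is never contradictory. Once the interpolation is carried out and smoothed, $f$ is a Morse function whose critical set is exactly the fixed-point set of $\rho$ and which satisfies $\Ind_p(f) = \Ind_p(w)$ at every $p$, as required.
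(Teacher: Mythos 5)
Your construction follows the same route as the paper through all but the last step: the quadratic local models $f_p$ with $-X_w\cdot f_p \geq 0$ and the correct index, the choice of the constants $c_p$ respecting the partial order generated by $\rightarrow$ (which is exactly where the no-cycle hypothesis is used in the paper as well), and the extension over the rest of $M$ by monotone interpolation along flow lines between prescribed boundary values. Up to that point the two arguments agree in substance, and your remark that the order supplies the filtration that transversality would otherwise provide is an accurate description of the role of the hypothesis.

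The gap is in the final clause ``smooth the result with a partition of unity subordinate to a covering by flow boxes.'' A partition of unity glues \emph{smooth} local data into a smooth global function; what you have at that stage is a function that is smooth on each piece but only continuous (at best Lipschitz) across the interfaces between the neighborhoods $U_p$, the flow boxes, and the interpolated regions, so there is nothing smooth to glue. If instead you mean a convex combination $\sum_\lambda \chi_\lambda f_\lambda$ of smooth local extensions $f_\lambda$ that do not agree on overlaps, then $-X_w\cdot\bigl(\sum_\lambda \chi_\lambda f_\lambda\bigr) = \sum_\lambda \chi_\lambda (-X_w\cdot f_\lambda) + \sum_\lambda (-X_w\cdot \chi_\lambda)\, f_\lambda$, and the second sum, which involves the derivatives of the cutoff functions, has no definite sign: it can create new critical points and destroy the very monotonicity your argument relies on. This is precisely where the paper does real work: it regularizes the continuous piecewise-smooth function with de Rham's convolution-type smoothing operator and proves (Lemma~\ref{l:smoothing_inequality}, Lemma~\ref{l:continuous_to_lipschitz} and Proposition~\ref{p:smoothing_piecewise_Morse}) that for a sufficiently fine regularization the strict inequality $-X_w\cdot f \geq \varepsilon' > 0$ survives outside small compact neighborhoods of the fixed points, while $f$ is left unchanged near the fixed points so that the Morse data are preserved. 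Without an argument of this kind --- a smoothing procedure together with a proof that it preserves the Lyapunov inequality --- the conclusion that the smoothed $f$ is a Morse function whose critical set is exactly the fixed-point set does not follow.
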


\begin{proof}
	We construct the function $f : M \rightarrow \realset$ explicitly. Let $p_1, \dots, p_V$ be the fixed points of $f$. Fix $c_1, \dots, c_V \in \realset$ such that
	\[ \forall\ 1 \leq i, j \leq V,\quad p_i \rightarrow p_j \implies c_i < c_j \]
	(it is possible precisely because $\varphi_w^t$ has no cycles). Define $d_i^+ = c_i + \varepsilon$ and $d_i^- = c_i - \varepsilon$ where $0 < \varepsilon < \min_{p_i \rightarrow p_j} (c_j - c_i)$.
	
	\textbf{Step 1.} Take $U_i$ a neighborhood of $p_i$ with local coordinates $(x_1, \dots, x_n)$ centered at $p_i$ associated to an adapted base $B = (v_1, \dots, v_n)$. Let $(\alpha_1, \dots, \alpha_n) \in (\realset \setminus \set{0})^n$ be the coordinates of $w$ in $B$. We have:
	\[ X_w = \alpha_1 x_1 \partdiff{}{x_1} + \cdots + \alpha_n x_n \partdiff{}{x_n}. \]
	Define $f_i : U_i \rightarrow \realset$ by
	\[ f_i(x_1, \dots, x_n) = c_i - \alpha_1 \frac{x_1^2}{2} - \cdots - \alpha_n \frac{x_n^2}{2}. \]
	The unique singular point of $f_i$ on $U_i$ is $p_i$. It satisfies $\Ind_{p_i}(f_i) = \Ind_{p_i}(w)$. Moreover, $-X_w \cdot f_i = \alpha_1^2 x_1^2 + \cdots + \alpha_n^2 x_n^2 \geq 0$.
	
	\begin{figure}
		\centering
		\def\svgwidth{0.9\textwidth}
		\small%
		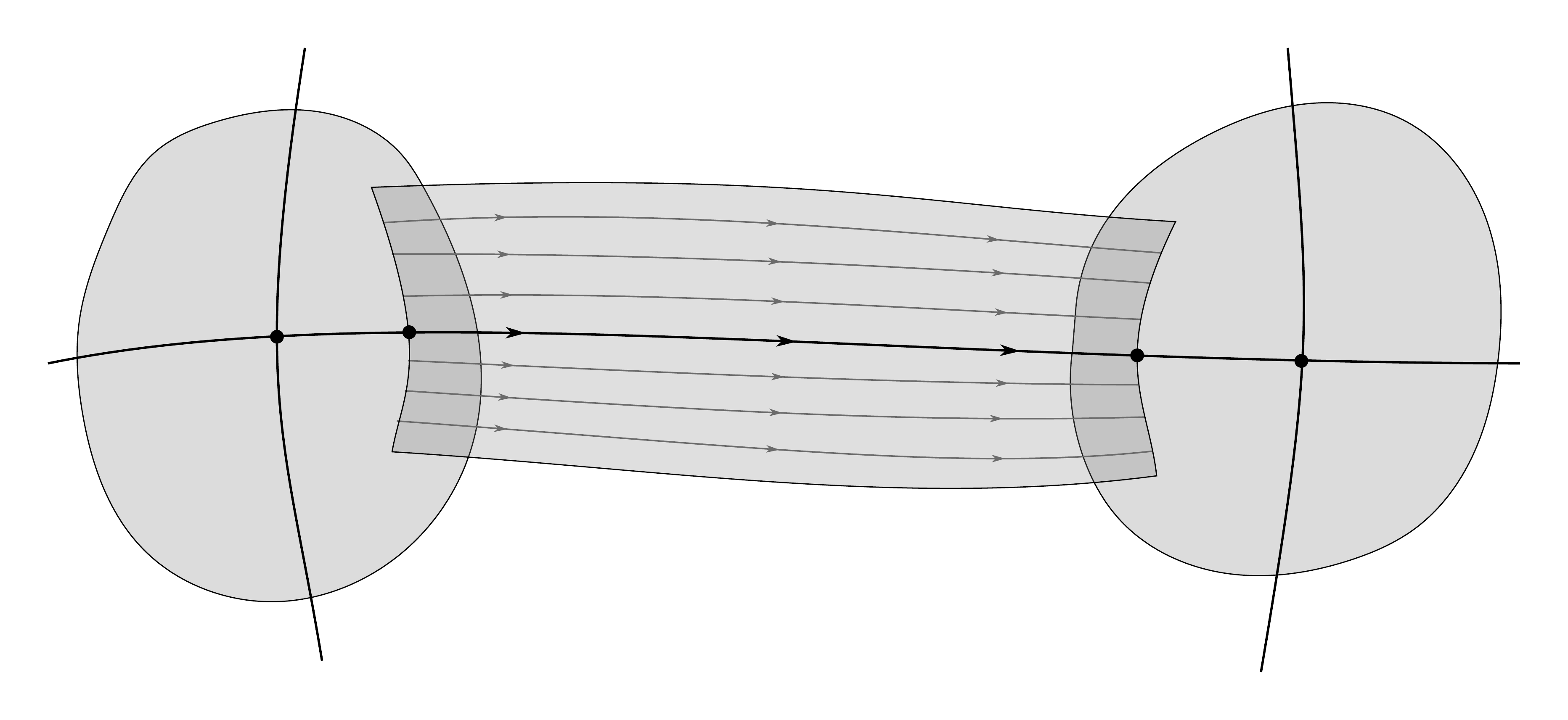
		\caption{Connecting $U_i$ and $U_j$ along the 1-dimensional orbit $\Ocal_{ij}$}
		\label{f:Morse_construction_step2}
	\end{figure}
	
	\textbf{Step 2.} Suppose $p_i \rightarrow p_j$. We then have $c_i < d_i^+ < d_j^- < c_j$. Let $\Ocal_{ij}$ be the $1$-dimensional orbit of $\rho$ joining $p_i$ and $p_j$. There exists $q_i \in U_i \cap \Ocal_{ij}$, $q_j = \varphi_w^T(q_i) \in U_j \cap \Ocal_{ij}$, $V_{ij} \subset M$, a neighborhood $\Omega$ of $0$ in $\realset^{n-1}$ and a diffeomorphism $\varphi_{ij} : V_{ij} \rightarrow \Omega \times [0,1]$ such that
	\begin{itemize}
		\item $\varphi_{ij}^{-1}(\Omega \times \set{0}) \subset \set{p \in U_i \mid f_i(p) = d_i^+}$,
		\item $\varphi_{ij}^{-1}(\Omega \times \set{1}) \subset \set{p \in U_j \mid f_j(p) = d_j^-}$,
		\item $\varphi_{ij}^{-1}(\set{0} \times [0,1]) = \set{\varphi_w^t(q_i) \mid 0 \leq t \leq T}$,
		\item $(\varphi_{ij})_\ast (-X_w) = \partdiff{}{t}$ where $(s_1, \dots, s_{n-1},t)$ are coordinates on $\Omega \times [0,1]$
	\end{itemize}
	(see Figure~\ref{f:Morse_construction_step2}).
	
	Let $f_{ij} : U_{ij} \rightarrow \realset$ be defined by $f_{ij} \circ \varphi_{ij}^{-1}(s,t) = d^+_i + t(d^-_j - d^+_i)$. Then $f_{ij}$ has no singular points on $U_{ij}$ and $-X_w \cdot f_{ij} \geq \varepsilon_{i,j} > 0$.
	
	\begin{figure}
		\centering
		\def\svgwidth{\textwidth}
		\tiny%
		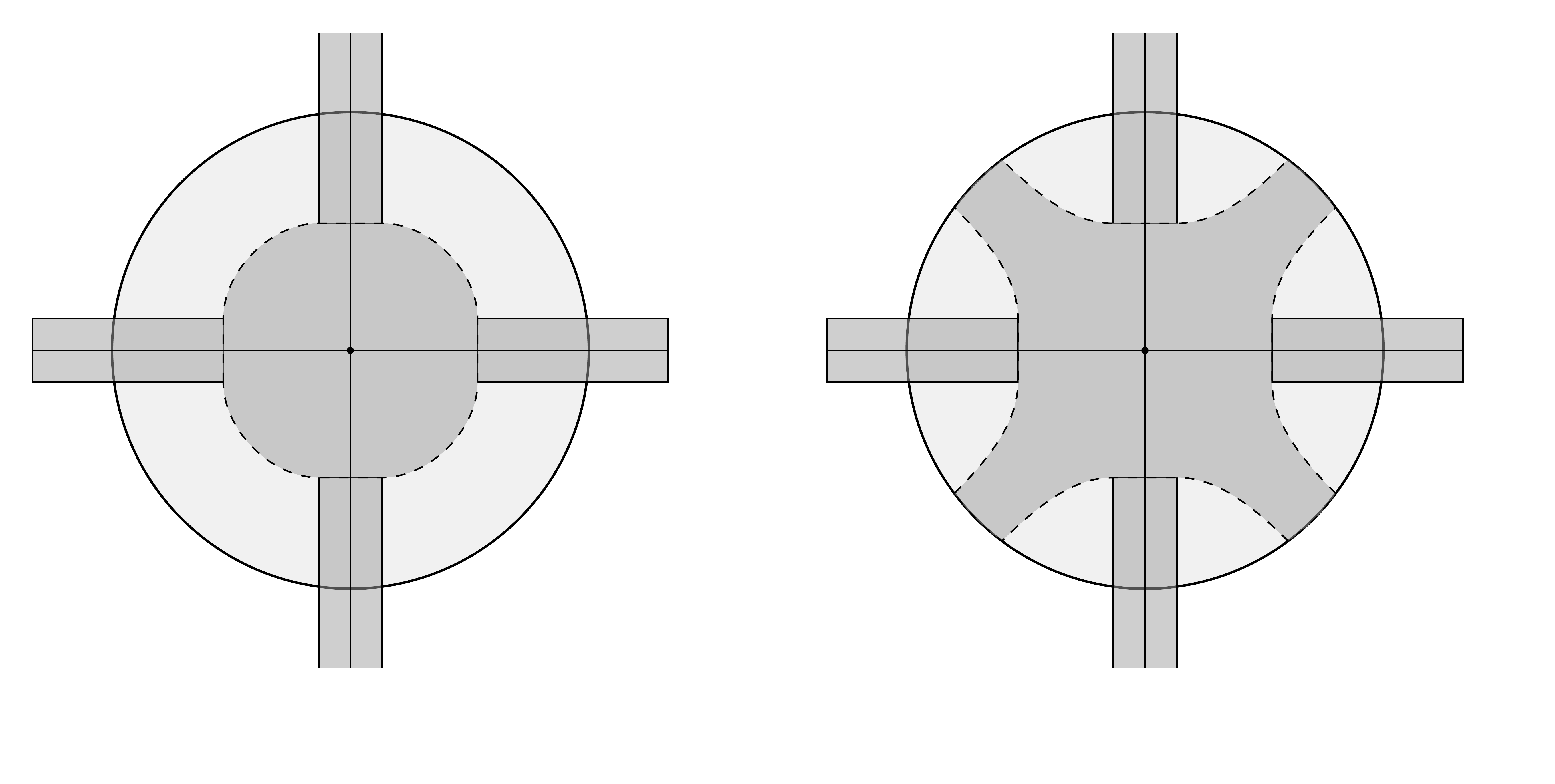
		\caption{Restricting $U_i$ to $W_i$ in dimension 2}
		\label{f:Morse_construction_step3}
	\end{figure}
	
	\textbf{Step 3.} Without loss of generality, assume $U_i \cap U_j = \emptyset$ whenever $i \neq j$, and $U_{ij} \cap U_{kl} = \emptyset$ whenever $\set{i,j} \neq \set{k,l}$. Define
	\[ W_i = \set{p \in U_i \mid d_i^- < f_i(p) < d_i^+} \]
	and $W_{ij}$ as the interior of $V_{ij}$ (see Figure~\ref{f:Morse_construction_step3}). Let $W = (\cup_i W_i) \cup (\cup_{ij} W_{ij})$. The maps $f_i$ and $f_{ij}$ define a continuous function on $\closure{W}$ which is smooth on $W$.
	
	\textbf{Step 4.} Let $W'$ be a connected component of $M \setminus \closure{W}$. It lies in some $n$-dimensional orbit $\Ocal$ of the action. Extends $f$ continuously on $W \cup \closure{W'}$ using the flow $\varphi_w^t$.
	
	For $p \in \boundary{W'}$, define
	\[ T(p) = \inf \set{t > 0 \mid \varphi_t(p) \notin W'}. \]
	The dynamics of $\varphi_w^t$ are such that for any $q \in W'$, there exists a unique $p \in \boundary{W'}$ such that $q = \varphi_w^t(p)$, with $0 < t < T(p)$. Set
	\[ f(q) = \frac{T(p) - t}{T(p)} f(p) + \frac{t}{T(p)} f(\varphi_w^{T(p)}(p)) \]
	(using that both $p$ and $\varphi_w^{T(p)}(p)$ lie in $\closure{W}$ where $f$ is already defined). Now $f$ is continuous on $\closure{W} \cup \closure{W'}$, and smooth on $W$ and $W'$. Moreover, by definition of $f$ on $W'$ we get :
	\[ (X_w \cdot f)(q) = \frac{f(\varphi_w^{T(p)}(p)) - f(p)}{T(p)}. \]
	But by construction of $W$, $p$ has to lie in $\closure{U_i}$ around some $p_i$, $\varphi_w^t(p)$ lies un $\closure{U_j}$ for some $p_j$, and there exists a sequence of fixed points $p_i \rightarrow \cdots \rightarrow p_j$.
	So finally
	\[ (X_w \cdot f)(q) = \frac{d^-_j - d^+_i}{T(p)} > 0. \]
	
	\textbf{Step 5.} We now have a function $f : M \rightarrow \realset$ continuous on $M$, smooth on a finite family of open sets $(W_\lambda)_{\lambda \in \Lambda}$ such that $\cup_\lambda \closure{W_\lambda} = M$. Also, the restriction of $f$ to $W_\lambda$ can be extended to a smooth function on an open subset containing $\closure{W_\lambda}$. Everywhere it is defined, we have $(-X_w \cdot f) \geq 0$. More precisely, $(-X_w \cdot f) \geq \varepsilon > 0$ outside some neighborhoods around the fixed points of $\rho$. Now it suffices to smooth the function $f$ with some operator defined below, the rest of the proof is detailed in Proposition~\ref{p:smoothing_piecewise_Morse}.
\end{proof}

We will use the smoothing operator introduced by de Rham in~\cite{derham2012differentiable}. Let us recall here its definition.

Start with a smooth strictly increasing function $\rho : [0, 1) \rightarrow [0, \infty)$ such that $\rho(t) = t$ when $t < 1/3$, and $\rho(t) \geq t$ for any $t \geq 0$. This function determines a radial smooth diffeomorphism $h : B_1 \rightarrow \realset^n$ from the unit ball in $\realset^n$ to $\realset^n$ by:
\[ h(x) = \frac{\rho(\norm{x})}{\norm{x}} x. \]
Then define the smooth diffeomorphism $\sigma_v : \realset^n \rightarrow \realset^n$ by
\[ \sigma_v(x) = \begin{cases}
h^{-1}(h(x) + v) &\text{if } \norm{x} < 1, \\
x &\text{if } \norm{x} \geq 1.
\end{cases} \]
Also, fix a smooth positive function $\chi : \realset^n \rightarrow \realset$ with support included in $\closure{B_1}$ and such that
\[ \int_{\realset^n} \chi(v) \dd v = 1. \]
Suppose $\Omega$ is an open subset of $\realset^n$ containing $\closure{B_1}$ (thus $\sigma_v(\Omega) = \Omega$). Define for any $t > 1$ the regularization operator $R_t$ as follows. If $g : \Omega \rightarrow \realset$ is a continuous function, set:
\[ \forall x \in \Omega, \ (R_t g)(x) = \int_{\realset^n} g \circ \sigma_v(x) t^n \chi(tv) \dd v. \]
The resulting function $R_t g : \Omega \rightarrow \realset$ is smooth.

\begin{lemma}
	\label{l:smoothing_inequality}
	Suppose $\closure{\Omega}$ is compact and let $g : \Omega \rightarrow \realset$ be a continuous function. Suppose that, for some $s_0 > 0$, $g$ extends to a Lipschitz continuous function on $\Omega_{s_0} = \Omega + s_0 B_1$ satisfying for any $x \in \Omega$ and $0 \leq s < s_0$,
	\[ g(x + se_1) - g(x) \geq s \varepsilon \]
	(where $e_1 = (1, 0, \dots, 0)$ denotes the first vector in the canonical basis of $\realset^n$).
	
	Then for any $0 < \varepsilon' < \varepsilon$, there exists $t_0 > 0$ such that for any $t \geq t_0$, the smoothed function $R_t g : \Omega \rightarrow \realset$ satisfies
	\[ \partdiff{R_t g}{x_1}(x) \geq \varepsilon' \]
	for any $x \in \Omega$.
\end{lemma}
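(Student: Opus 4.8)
The plan is to rewrite $R_t$ as an average over diffeomorphisms that are uniformly close to the identity, and then to bound the $e_1$-difference quotient of $R_t g$ directly, passing to the limit at the very end. Substituting $w = tv$ gives
\[ (R_t g)(x) = \int_{\realset^n} g\big(\sigma_{w/t}(x)\big)\,\chi(w)\,\dd w, \]
and since $\chi$ is supported in $\closure{B_1}$, only $\norm{w}\le 1$ contributes, so every diffeomorphism $\sigma_{w/t}$ appearing has parameter of norm at most $1/t$. The input I will rely on is that de Rham's construction makes $\sigma_v \to \mathrm{id}$ in the $C^1$ topology, uniformly on the compact set $\closure{\Omega}$, as $v \to 0$; in particular $D\sigma_{w/t}(x) \to I$ uniformly in $x \in \Omega$ and in $w \in \mathrm{supp}\,\chi$ as $t \to \infty$. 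I fix once and for all $t$ large enough that $\Norm{D\sigma_{w/t}(x) - I}\le \tfrac12$ for all such $x, w$, which is all I will need besides the $e_1$-monotonicity and a global Lipschitz constant $L$ for $g$ on $\Omega_{s_0}$.

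Fix $x \in \Omega$, write $y = \sigma_{w/t}(x)$ and $\xi(s) = \sigma_{w/t}(x + s e_1) - y$ for small $s > 0$, and split
\[ g\big(y + \xi(s)\big) - g(y) = \big[\, g\big(y + \xi_1(s)\,e_1\big) - g(y)\,\big] + \big[\, g\big(y + \xi(s)\big) - g\big(y + \xi_1(s)\,e_1\big)\,\big], \]
where $\xi_1(s)$ is the first coordinate of $\xi(s)$. Because $D\sigma_{w/t}$ is within $\tfrac12$ of $I$, we have $\xi_1(s) > 0$ and $\xi_1(s) < s_0$ for $s$ small, and all relevant points stay in $\Omega_{s_0}$; hence the first bracket is at least $\xi_1(s)\,\varepsilon$ by the monotonicity hypothesis, while the second bracket is at least $-L\,\Norm{\xi(s) - \xi_1(s)\,e_1}$ by the Lipschitz bound. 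Dividing by $s$ and letting $s \to 0^+$, the quotients $\xi_1(s)/s$ and $\Norm{\xi(s) - \xi_1(s)\,e_1}/s$ converge to $a(x,w,t) := \langle D\sigma_{w/t}(x)\,e_1,\,e_1\rangle$ and to $b(x,w,t) := \Norm{D\sigma_{w/t}(x)\,e_1 - a(x,w,t)\,e_1}$ (the size of the transverse distortion), respectively.

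Since $R_t g$ is smooth, $\partdiff{R_t g}{x_1}(x)$ is the limit of the difference quotients $\frac{(R_t g)(x+s e_1) - (R_t g)(x)}{s}=\int (\text{integrand})\,\chi(w)\,\dd w$, and the integrands being uniformly bounded for small $s$, Fatou's lemma (applied after adding a constant) gives
\[ \partdiff{R_t g}{x_1}(x) \;\geq\; \int_{\realset^n} \big( a(x,w,t)\,\varepsilon - L\,b(x,w,t) \big)\,\chi(w)\,\dd w. \]
Now let $t \to \infty$: by the uniform $C^1$ convergence $D\sigma_{w/t}(x) \to I$ one has $a(x,w,t) \to 1$ and $b(x,w,t) \to 0$, both uniformly in $x \in \Omega$ and $w \in \mathrm{supp}\,\chi$, so the right-hand side tends to $\varepsilon \int_{\realset^n}\chi = \varepsilon$ uniformly in $x$. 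Hence for every $\varepsilon' < \varepsilon$ there is $t_0$ such that $\partdiff{R_t g}{x_1}(x) \geq \varepsilon'$ for all $t \geq t_0$ and all $x \in \Omega$, as claimed.

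The main obstacle is exactly that $\sigma_v$ is a genuine translation only on $\set{\norm{x} < 1/3}$, so the one-directional monotonicity of $g$ is \emph{not} preserved verbatim under the averaging: away from that ball, moving in $e_1$ at $x$ corresponds to moving in the distorted direction $D\sigma_{w/t}(x)\,e_1$ at $y$, which is why the Lipschitz control of the transverse term $b$ is needed and why one loses the tiny amount $\varepsilon-\varepsilon'$. Everything then rests on the uniform $C^1$-smallness of $\sigma_{w/t} - \mathrm{id}$ holding right up to the sphere $\set{\norm{x}=1}$, where $h$ degenerates; establishing this uniformity near the sphere — precisely the regularity that de Rham's choice of the profile $\rho$ is designed to guarantee — is the one point that deserves care.
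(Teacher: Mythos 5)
Your argument is correct and takes essentially the same route as the paper's: both split $g\circ\sigma_v(x+se_1)-g\circ\sigma_v(x)$ into a piece handled by the $e_1$-monotonicity hypothesis plus an error absorbed by the Lipschitz constant of $g$, and both extract a $t_0$ uniform in $x$ from the compactness and joint smoothness of de Rham's family $(v,x)\mapsto\sigma_v(x)$ (the paper via Heine--Cantor applied to $\frac{\partial}{\partial s}\left[\sigma_v(x+se_1)-se_1\right]$, you via the uniform $C^1$-convergence $D\sigma_v\to I$ --- the same regularity input, which the paper likewise invokes without proof). The only differences are bookkeeping: the paper keeps a finite-difference estimate uniform in $s$ and divides at the very end, whereas you pass to the derivative first (via Fatou, for fixed $t$) and then let $t\to\infty$.
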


\begin{proof}
	Fix $0 < \varepsilon' < \varepsilon$. For any $x \in \Omega$ and $0 \leq s < s_0$,
	\[ (R_t g)(x + s e_1) - (R_t g)(x) = \int_{\realset^n} (g \circ \sigma_v(x + s e_1) - g \circ \sigma_v(x)) t^n \chi(t v) \dd v.  \]
	Recall that $\chi$ has support included in $B_1$, so the above integral can actually be computed on $B_{1/t}$.
	
	Denote by $k$ the Lipschitz constant of $g$. The map $F : \closure{\Omega} \times [0,s_0] \times \closure{B_1} \rightarrow \realset^n$ defined by
	\[ F(x,s,v) = \sigma_v(x + s e_1) - s e_1 \]
	is smooth. By Heine--Cantor theorem, there exists $\eta > 0$ such that for any $x \in \Omega$, $s \in [0, s_0]$ and $v \in \closure{B_1}$, we have
	\[ \Norm{\partdiff{F}{s}(x,s,v)} = \Norm{\partdiff{F}{s}(x,s,v) - \partdiff{F}{s}(x, s, 0)} \leq \frac{\varepsilon - \varepsilon'}{k} \]
	whenever $\norm{v} \leq \eta$. Applying the mean value theorem to $s \mapsto F(x,s,v)$, we obtain
	\[ \norm{\sigma_v(x + s e_1) - s e_1 - \sigma_v(x)} \leq \frac{s (\varepsilon - \varepsilon')}{k} \]
	for any $x \in \Omega, s \in [0,s_0]$ and $v \in \closure{B_1}$ such that $\norm{v} \leq \eta$. It follows that
	\[ \abs{g \circ \sigma_v(x + s e_1) - g(\sigma_v(x) + s e_1)} \leq s (\varepsilon - \varepsilon'). \]
	
	Moreover, recall that it is assumed that for any $x \in \Omega$, $0 \leq s < s_0$ and $v \in \realset^n$ we have
	\[ g(\sigma_v(x) + s e_1) - g \circ \sigma_v(x) \geq s \varepsilon, \]
	so finally, when $\norm{v} \leq \eta$, the following inequality holds:
	\[ g \circ \sigma_v(x + s e_1) - g \circ \sigma_v(x) \geq s \varepsilon'. \]
	It follows that for any $t \geq t_0 = 1 / \eta$, $x \in \Omega$, $0 \leq s < s_0$,
	\[ (R_t g)(x + s e_1) - (R_t g)(x) \geq s \varepsilon'. \]
	We finish the proof by dividing both sides by $s$ and taking the limit $s \to 0$.
\end{proof}

The above lemma will be used together with the following result:
\begin{lemma}
	\label{l:continuous_to_lipschitz}
	Suppose now $\Omega_{s_0}$ is convex and bounded and let $g : \Omega_{s_0} \rightarrow \realset$ be a continuous function. Suppose there exist finite families $(\Omega_i)_{i \in I}$ and $(\Omega'_i)_{i \in I}$ of open sets of $\realset^n$ and a family $(\tilde{g}_i : \Omega'_i \rightarrow \realset)_{i \in I}$ of smooth functions such that:
	\begin{itemize}
	\item $\cup_{i \in I} \closure{\Omega_i}$ covers $\Omega_{s_0}$,
	\item for each $i \in I$, $\closure{\Omega_i} \subset \Omega'_i$,
	\item for each $i \in I$ $\tilde{g}_i$ and $g$ coincide on $\closure{\Omega_i}$.
	\end{itemize}
	Then the function $g$ is Lipschitz continuous on $\Omega_{s_0}$.
\end{lemma}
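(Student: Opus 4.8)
The plan is to deduce a global Lipschitz bound from purely local estimates, using the convexity of $\Omega_{s_0}$ to chain them along straight segments. First I would reduce to the case where each $\Omega_i$ is bounded: intersecting every $\Omega_i$ with a fixed open ball $B$ containing $\closure{\Omega_{s_0}}$ changes neither the covering property on $\Omega_{s_0}$ nor the coincidence $\tilde g_i = g$ on $\closure{\Omega_i}$, while making each $\closure{\Omega_i}$ compact. Since $\closure{\Omega_i}$ is then a compact subset of the open set $\Omega'_i$ on which $\tilde g_i$ is smooth, I can fix a compact neighborhood $N_i$ with $\closure{\Omega_i} \subset \mathrm{int}(N_i) \subset N_i \subset \Omega'_i$ and set $k = \max_{i \in I} \sup_{N_i} \Norm{\nabla \tilde g_i} < \infty$; this $k$ will be the final Lipschitz constant.

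Now fix $x, y \in \Omega_{s_0}$. By convexity the segment $\gamma(t) = (1-t)x + ty$, $t \in [0,1]$, lies in $\Omega_{s_0}$, hence is covered by the closed sets $\closure{\Omega_i}$; set $h = g \circ \gamma$ and $L = k\Norm{y-x}$. The key step is a \emph{pointed} Lipschitz estimate: for each $t^* \in [0,1]$ there is $\eta > 0$ with $\abs{h(t) - h(t^*)} \le L\abs{t - t^*}$ whenever $\abs{t - t^*} < \eta$. To see this, let $J = \set{i : \gamma(t^*) \in \closure{\Omega_i}}$; since the finitely many $\closure{\Omega_i}$ with $i \notin J$ are closed and miss $\gamma(t^*)$, for $t$ close to $t^*$ the point $\gamma(t)$ lies in some $\closure{\Omega_j}$ with $j \in J$, so $h(t) = \tilde g_j(\gamma(t))$. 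Because $\gamma(t^*) \in \closure{\Omega_j}$ forces $\tilde g_j(\gamma(t^*)) = g(\gamma(t^*)) = h(t^*)$ for every $j \in J$, the estimate then follows by the mean value inequality from $\Norm{\nabla \tilde g_j} \le k$ on $N_j$ (the short segment $[\gamma(t^*),\gamma(t)]$ staying inside $\mathrm{int}(N_j)$ for $t$ near $t^*$) together with $\Norm{\gamma(t) - \gamma(t^*)} = \Norm{y-x}\abs{t - t^*}$.

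The crucial point, and the main obstacle, is precisely that $g$ agrees with $\tilde g_j$ only on the closed set $\closure{\Omega_j}$ and not on a full neighborhood, so one cannot simply invoke the smoothness of a single function over an entire subsegment; the pointed estimate circumvents this by exploiting that all the local smooth models whose domains share the base point $\gamma(t^*)$ must agree there with $g$. It then remains to upgrade the pointed estimates into a genuine Lipschitz bound for $h$ on the connected interval $[0,1]$. I would do this by a supremum argument: fixing $s < t$ and setting $\Phi(\tau) = h(\tau) - h(s) - L(\tau - s)$, the number $\tau_0 = \sup\set{\tau \in [s,t] : \Phi(\tau) \le 0}$ satisfies $\Phi(\tau_0) \le 0$ by continuity, and if $\tau_0 < t$ the pointed estimate at $\tau_0$ forces $\Phi \le 0$ just to the right of $\tau_0$, contradicting its maximality; hence $\tau_0 = t$ and $\Phi(t) \le 0$. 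Applying the same reasoning to $-h$ gives $\abs{h(t) - h(s)} \le L\abs{t - s}$. Taking $s = 0$ and $t = 1$ yields $\abs{g(x) - g(y)} \le k\Norm{x - y}$, and since $x, y \in \Omega_{s_0}$ were arbitrary, $g$ is $k$-Lipschitz on $\Omega_{s_0}$.
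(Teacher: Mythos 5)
Your proof is correct and follows essentially the same route as the paper's: reduce to a straight segment via convexity, bound the gradients of the $\tilde g_i$ uniformly on compact neighborhoods of the $\closure{\Omega_i}$, and propagate the local mean-value estimates along the segment by a supremum/continuity argument, the key point in both being that $g$ agrees with $\tilde g_j$ at any two nearby points of the same $\closure{\Omega_j}$. The only (cosmetic) difference is that you first establish a pointed Lipschitz estimate at every parameter and then run a barrier argument with $\Phi$, whereas the paper works directly with the supremum $T$ and extracts a constant index by pigeonhole from a sequence $x_{T+\varepsilon_n}$.
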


\begin{proof}
	For each $i \in I$, fix an open set $U_i$ such that $\closure{\Omega_i} \subset U_i \subset \closure{U_i} \subset \Omega'_i$ and $\closure{U_i}$ is compact (without loss of generality, we can suppose $\Omega'_i$ is bounded). Then
	\[ k = \max_{i \in I} \left( \sup \set{ \norm{\dd \tilde{g}_i(x)} \mid x \in \closure{U_i} } \right) \]
	is finite. Let $x_0, x_1$ be two distinct points in $\Omega_{s_0}$. Since $\Omega_{s_0}$ is convex, for any $t \in [0,1]$, the point
	$ x_t = (1 - t)x_0 + t x_1 $
	lies in $\Omega_{s_0}$. Define $T \in [0, 1]$ by
	\[ T = \sup \set{ t \in [0,1] \mid \norm{g(x_t) - g(x_0)} \leq k \norm{x_t - x_0} }. \]
	We proceed using reductio ad absurdum and suppose $T < 1$. For any $n > 1/(1 - T)$, there exists $i_n \in I$ such that $x_{T+ 1/n} \in \closure{\Omega_{i_n}}$. Since $I$ is finite, there exists a fixed $i \in I$ and a subsequence $(\varepsilon_n)_n$ of $(1/n)_n$ such that $x_{T + \varepsilon_n} \in \closure{\Omega_{i}}$ for any $n$. In particular, $x_T = \lim_{n \to \infty} x_{T + \varepsilon_n}$ also lies in $\closure{\Omega_{i}} \subset U_{i}$. Let $r > 0$ be such that the open ball $B(x_T, r)$ of center $x_T$ and radius $r$ is included in the open set $U_i$. Then for any $\delta < r/\norm{x_1 - x_0}$, the point $x_{T + \delta}$ lies in $U_i$, and we can apply the mean value theorem to the function $s \mapsto \tilde{g}_{i}(x_s)$ to obtain
	\[ \norm{\tilde{g}_{i}(x_{T+\delta}) - \tilde{g}_{i}(x_T) } \leq k \norm{x_{T+\delta} - x_{T}}. \]
	In particular for $\delta = \varepsilon_n$, since $x_{T+\varepsilon_n}$ and $x_T$ lie in $\closure{\Omega_{i}}$ where $g$ and $\tilde{g}_i$ coincide, we get for large enough $n$:
	\[ \norm{g(x_{T+\varepsilon_n}) - g(x_T)} \leq k \norm{x_{T+\varepsilon_n} - x_T}. \]
	But by definition of $T$, we have $\norm{g(x_T) - g(x_0)} \leq k \norm{x_T - x_0}$. It follows that
	\[ \norm{g(x_{T+\varepsilon_n}) - g(x_0)} \leq k (\norm{x_{T+\varepsilon_n} - x_T} + \norm{x_T - x_0}) = k \norm{x_{T + \varepsilon_n} - x_0}, \]
	which contradicts the definition of $T$ as a supremum.
	
	It follows that $T = 1$, that is $\norm{g(x_1) - g(x_0)} \leq k \norm{x_1 - x_0}$.
\end{proof}

This operator $R_t$ can then be used to define local regularization operators on $M$. Suppose $(V, \varphi)$ is a local chart on $M$ such that $\closure{B_1} \subset \varphi(V)$. Define the regularization operator $S_{t, V}$ acting on continuous functions $f : M \rightarrow \realset$ by:
\[ (S_{t, V}f)(p) = \begin{cases}
	R_t(f \circ \varphi^{-1})(\varphi(p)) &\text{if } p \in V, \\
	f(p) &\text{otherwise}.
\end{cases} \]
The resulting function $S_{t, V}f : M \rightarrow \realset$ is smooth on $V$.

We are now able to conclude the proof of Theorem~\ref{t:construction_of_Morse_functions}, by applying the following proposition.

\begin{proposition}
	\label{p:smoothing_piecewise_Morse}
	Let $f : M \rightarrow \realset$ be a continuous function on a compact $n$-dimensional manifold $M$. Suppose there exists a finite family of disjoint open subsets $(W_i)_{i \in I}$, a family $(K_i)_{i \in I}$ of (possibly empty) compact subsets $K_i \subset W_i$, a smooth vector field $X$ with (complete) flow $\Phi^t$ on $M$ and $\varepsilon > 0$ such that:
	\begin{enumerate}
		\item the restriction of $f$ to each $W_i$ is a smooth Morse function (possibly with no singularities),
		\item the set $\Scal = \cup_i W_i$ on which $f$ is smooth satisfies $\closure{\Scal} = M$,
		\item for each $i \in I$, there exists an open subset $W'_i \subset M$ containing $\closure{W_i}$ and a smooth function $\tilde{f}_i : W'_i \rightarrow M$ such that $f$ and $\tilde{f}_i$ coincide on $\closure{W_i}$,
		\item for any $p \in \Scal$, $\Phi^t(p) \in \Scal$ except for a finite number of $t \in \realset$,
		\item $(X \cdot f)(p) \geq \varepsilon$ for any $p \in \Scal \setminus (\cup_i K_i)$. In particular, the singularities of $f$ and $X$ lie in $\cup_i K_i$.
	\end{enumerate}
	Then there exists a smoothing operator $S_t$ on $M$ such that for any large enough $t > 0$, $S_t f : M \rightarrow \realset$ is a Morse function whose singularities are exactly the Morse singularities of $f$ with the same Morse indices.
\end{proposition}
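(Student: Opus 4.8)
The plan is to realize $S_t$ as a finite composition of the local de Rham operators $S_{t,V}$, arranged so that it regularizes $f$ across the ``seams'' $M \setminus \Scal$ while leaving $f$ completely untouched on a neighborhood of the compact set $\cup_i K_i$ that carries all of its singularities. The guiding observation is hypothesis (5): on $\Scal \setminus (\cup_i K_i)$ we have $(X \cdot f) = \dd f(X) \ge \varepsilon > 0$, so there $\dd f$ never vanishes and $X$ has no zero, hence $X$ is locally rectifiable. Thus there are only two things to control: that the composed operator yields a function that is globally smooth and agrees with $f$ near $\cup_i K_i$ (so the Morse singularities and their indices are untouched), and that on the complementary region the inequality $(X \cdot S_t f) > 0$ survives the smoothing, forbidding any new critical point.

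First I would observe that $f$ is Lipschitz: read in charts, hypotheses (1)--(3) are precisely those of Lemma~\ref{l:continuous_to_lipschitz}, and the same argument applies to every intermediate function produced below, since each remains continuous and piecewise smooth over a finite cover. Since $\cup_i K_i$ is a compact subset of the open set $\Scal$ and $M \setminus \Scal$ is closed and disjoint from it, I choose an open neighborhood $N$ of $\cup_i K_i$ with $\closure{N} \subset \Scal$ and $\closure{N} \cap (M \setminus \Scal) = \emptyset$. By compactness of $M$ I cover the closed set $M \setminus \Scal$ by finitely many charts $(V_j, \varphi_j)_{1 \le j \le m}$ with $\closure{B_1} \subset \varphi_j(V_j)$, taking each $\varphi_j$ to be a rectifying chart for $X$ (so that $X = \partdiff{}{x_1}$ in the coordinates of $V_j$) and small enough that $U_j := \varphi_j^{-1}(B_1)$ has $\closure{U_j} \subset M \setminus \closure{N}$. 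In these coordinates condition (5) reads $\partdiff{f}{x_1} \ge \varepsilon$ on the smooth locus; because a flow line of $X$ meets $M \setminus \Scal$ only finitely often by hypothesis (4) and $f$ is Lipschitz, integrating along the $x_1$-lines upgrades this to the integrated inequality $f(x + s e_1) - f(x) \ge s\varepsilon$ that Lemma~\ref{l:smoothing_inequality} requires.

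Then I would set $S_t = S_{t,V_m} \circ \cdots \circ S_{t,V_1}$ and treat the operators one at a time. Each $S_{t,V_j}g$ equals $g$ off $U_j$ and is smooth on $U_j$; moreover, since every $\sigma_v$ is a global diffeomorphism and the mollification radius tends to $0$, for $t$ large $S_{t,V_j}$ also preserves smoothness wherever $g$ was already smooth. Hence the smooth locus only grows, and after the $m$ steps $S_t f$ is smooth on $\Scal \cup U_1 \cup \cdots \cup U_m = M$. Because every $U_j$ avoids $N$, we have $S_t f = f$ on $N$, so near $\cup_i K_i$ the function, its singular points and their Morse indices coincide with those of $f$. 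On each $U_j$, applying Lemma~\ref{l:smoothing_inequality} in the rectifying coordinates (with a target constant slightly below the bound available at that stage) gives $(X \cdot S_{t,V_j}g) \ge \varepsilon' > 0$ for $t$ large, while off $U_j$ the intrinsic quantity $X \cdot g$ is unchanged; the bound therefore propagates from one step to the next and, there being finitely many steps, stays positive throughout. Consequently $(X \cdot S_t f) > 0$ on all of $M \setminus N$, so $S_t f$ has no critical point there. Combining the two regions, $S_t f$ is a smooth function whose singularities are exactly the Morse singularities of $f$, with the same indices, as claimed.

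The delicate point is precisely this propagation of monotonicity through the finite composition: at each stage one must recheck that the partial result is still Lipschitz (so Lemma~\ref{l:continuous_to_lipschitz} reapplies), that rectifying $X$ in the next chart faithfully turns the intrinsic bound $X \cdot g \ge \varepsilon^{(k)}$ into the coordinate inequality demanded by Lemma~\ref{l:smoothing_inequality}, and that hypothesis (4) really does force the integrated form of that inequality to hold across the non-smooth seams. Everything else is the bookkeeping of a finite cover.
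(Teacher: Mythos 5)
Your proposal is correct and follows essentially the same route as the paper's proof: a finite cover of $M \setminus \Scal$ by rectifying charts disjoint from the $K_i$, Lemma~\ref{l:continuous_to_lipschitz} to get Lipschitz continuity, hypothesis (4) to integrate $(X\cdot f)\geq\varepsilon$ into the difference inequality, Lemma~\ref{l:smoothing_inequality} to preserve strict monotonicity under each local de Rham operator, and a finite composition that leaves $f$ untouched near $\cup_i K_i$. The paper packages your step-by-step propagation into an invariant class $\Delta$ of functions preserved by each $S_{t,p}$, but this is only a difference of bookkeeping, not of substance.
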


\begin{proof}
	Denote by $\Delta$ the set of continuous functions $F : M \rightarrow \realset$ such that:
	\begin{itemize}
		\item $F$ is smooth on each $W_i$,
		\item for each $i \in I$, there is an open subset $W'_i \subset M$ and a smooth function $\tilde{F}_i : W'_i \rightarrow M$ such that $F$ and $\tilde{F}_i$ coincide on $\closure{W_i} \subset W'_i$,
		\item there exists $\varepsilon' > 0$ such that $(X \cdot F)(q) \geq \varepsilon'$ for any $q \in S \setminus (\cup_i K_i)$.
	\end{itemize}
		
	Let $p \in M \setminus \Scal$ be a point where $F$ might not be smooth. Since $X(p) \neq 0$, there exists a local chart $(U_p, \varphi_p)$ around $p$ such that
	\[ (\varphi_p)_\ast X = \alpha_p \partdiff{}{x_1} \text{ with } \alpha_p > 0. \]
	Without loss of generality, we can suppose that $\closure{U_p} \cap K_i = \emptyset$ for any $i \in I$. Also, up to multiplying $\varphi_p$ by some constant $c \geq 1$, we can assume that $\varphi_p(U_p)$ contains $\closure{B_1}$. Take $V_p \subset U_p$ and $s_p > 0$ such that $\Omega_p = \varphi_p(V_p)$ is convex, bounded and contains $\closure{B_1}$, and such that $\Omega_{p,s_p} = \varphi_p(V_p) + s_p B_1 \subset \varphi_p(U_p)$.
	
	Fix $F \in \Delta$. The function $g = F \circ \varphi_p^{-1}$ is continuous on $\Omega_{p,s_p}$. For each $i \in I$, $g$ is smooth on 
	\[ \Omega_i = \varphi_p(V_p \cap W_i), \]
	and extends to a smooth function $\tilde{g}_i = \tilde{F}_i \circ \varphi_p^{-1}$ on 
	\[ \Omega'_i = \varphi_p(U_p \cap W'_i). \] Then, by Lemma~\ref{l:continuous_to_lipschitz}, $g$ is Lipschitz continuous on $\Omega_{p,s_p}$.
	
	Let $q \in V_p \cap \Scal$ be a point where $F$ is smooth. Fix $0 < s < s_p$. There exists a finite sequence $0 = s_0 < s_1 < \cdots < s_k = s$ such that $\Phi^{s'}(q) \in \Scal$ for any $s_j < s' < s_{j+1}$. Since $\closure{U_p} \cap (\cup_i K_i) = \emptyset$, for such a $s'$ we have $(X \cdot F)(\Phi^{s'}(q)) \geq \varepsilon'$. It follows that for any $s_j < a < b < s_{j+1}$ we have
	\[ F(\Phi^b(q)) - F(\Phi^a(q)) \geq (b-a)\varepsilon'. \]
	By continuity, the result holds for $(a, b) = (s_j, s_{j+1})$. Summing those inequalities we obtain
	\[ F(\Phi^s(q)) - F(q) \geq s \varepsilon'. \]
	The result extends to any $q \in \closure{V_p \cap \Scal} = V_p$. In other words, the function $g = f \circ \varphi_p^{-1}$ satisfies for any $x \in \Omega_p = \varphi_p(V_p)$ and $0 < s < s_p$:
	\[ g(x + se_1) - g(x) \geq s \varepsilon' / \alpha_p. \]
	Let $S_{t,p}$ be the regularization operator defined on $V_p$. Fix $\varepsilon'' < \varepsilon'$. By Lemma~\ref{l:smoothing_inequality}, there exists $t_p > 0$ such that
	\[ \partdiff{R_t g}{x_1} \geq \varepsilon''/\alpha_p  \]
	for any $t \geq t_p$ and $x \in \Omega_p$, or equivalently,
	\[ (X \cdot S_{t,p} F)(q) \geq \varepsilon'' \]
	for any $q \in V_p$. Since $(S_{t,p} F)(q) = F(q)$ for any $q \in M \setminus V_p$, it follows that $S_{t,p}F$ is in $\Delta$.
	
	By compactness, there exists a finite number of points $p_1, \dots, p_k \in M$ such that the open subsets $V_{p_j}$ cover $M \setminus \Scal$. Define the total regularization operator
	\[ S_t = S_{t, p_k} \circ S_{t, p_{k-1}} \circ \cdots \circ S_{t, p_1}. \]
	Then $S_t f$ is smooth on $M$ and coincide with $f$ on each $K_i$. Moreover, since $f \in \Delta$, for any $t \geq \max \set{t_{p_j} \mid 1 \leq j \leq k}$ the function $S_t f$ is also in $\Delta$. In particular, there exists $\varepsilon' > 0$ such that
	\[ (X \cdot S_t)(q) \geq \varepsilon' \]
	for any $q \in \Scal \setminus (\cup_i K_i)$. By continuity, this holds for any $q \in M \setminus (\cup_i K_i)$, so in particular $S_t f$ has no singularities outside $\cup_i K_i$.
\end{proof}

As an immediate corollary of Theorem~\ref{t:construction_of_Morse_functions}, we obtain the Morse inequalities \cite{milnor1963morse}:

\begin{corollary}
	\label{c:Morse_inequalities}
	Let $\rho$ be a totally hyperbolic action of $\realset^n$ on a compact $n$-manifold $M$. Suppose there exists $w \in \realset^n$ such that $\varphi^t_w$ has no cycles. For $0 \leq i \leq n$, denote by $c_i$ the number of fixed points of $\rho$ of index $i$ with respect to $w$ and by $b_i(M)$ the $i$-th Betti number of $M$. Then we have the inequalities:
	\[ c_i - c_{i-1} + \cdots + (-1)^i c_0 \geq b_i(M) - b_{i-1}(M) + \cdots + (-1)^i b_0(M) \]
	for all $0 \leq i \leq n$, with equality when $i = n$, that is
	\[ \sum_{i=0}^n (-1)^i c_i = \chi(M). \]
\end{corollary}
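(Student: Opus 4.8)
The plan is to reduce the statement directly to the classical Morse inequalities by exploiting Theorem~\ref{t:construction_of_Morse_functions}. First I would invoke that theorem: since $w$ is generic and $\varphi_w^t$ has no cycles, it produces a genuine Morse function $f : M \rightarrow \realset$ whose critical points are exactly the fixed points of $\rho$ and which satisfies $\Ind_p(f) = \Ind_p(w)$ at each such point. The crucial consequence is that for each $0 \leq i \leq n$, the number of critical points of $f$ of Morse index $i$ coincides with $c_i$, the number of fixed points of $\rho$ of index $i$ with respect to $w$.

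With this identification in hand, the second step is purely classical. Since $M$ is compact and $f$ is a Morse function on $M$, the standard Morse inequalities \cite{milnor1963morse} apply verbatim to $f$: writing $m_i$ for the number of index-$i$ critical points of $f$, one has
\[ m_i - m_{i-1} + \cdots + (-1)^i m_0 \geq b_i(M) - b_{i-1}(M) + \cdots + (-1)^i b_0(M) \]
for every $0 \leq i \leq n$, with equality when $i = n$. Substituting $m_i = c_i$ yields exactly the asserted inequalities, and the equality at $i = n$ gives
\[ \sum_{i=0}^n (-1)^i c_i = \sum_{i=0}^n (-1)^i b_i(M) = \chi(M), \]
the last equality being the definition of the Euler characteristic in terms of the Betti numbers.

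Since all the real work --- constructing a smooth Morse function that realizes the indices $\Ind_p(w)$ as honest Morse indices --- has already been carried out in Theorem~\ref{t:construction_of_Morse_functions}, there is essentially no obstacle remaining here. The only point requiring a moment of care is the index-matching: one must confirm that $\Ind_p(f) = \Ind_p(w)$ holds at every fixed point, so that the counts $c_i$ agree term-by-term with the counts $m_i$ appearing in the classical inequalities. But this is precisely the content of the theorem, and the corollary is therefore immediate.
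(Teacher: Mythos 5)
Your proposal is correct and follows exactly the route the paper intends: the paper states this as an immediate consequence of Theorem~\ref{t:construction_of_Morse_functions}, namely applying the classical Morse inequalities of \cite{milnor1963morse} to the Morse function produced there, whose critical points and indices match the fixed points of $\rho$ and their indices with respect to $w$. Nothing further is needed.
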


In particular, for all $0 \leq i \leq n$, we have the weak Morse inequalities
\[ c_i \geq b_i(M), \]
so the number $V$ of fixed points of $\rho$ is at least $\sum_{i=1}^n b_i(M)$.
For instance, if $M$ is a compact orientable two-dimensional surface of genus $g$, then $V \geq 2g + 2$.

\section{Number of hyperbolic domains}
\label{s:number_of_domains}

Let us begin with the following observation, which is an immediate consequence of the study of the quasi Morse-Smale flows generated by generic vectors.
\begin{theorem}
	\label{t:number_of_domains_dimension_n}
	The number of hyperbolic domains of a totally hyperbolic action of $\realset^n$ on a compact connected $n$-manifold $M$ is equal to $k.2^n$, where $k$ is the number of attractive (or repulsive) fixed points of the quasi Morse--Smale flow $\varphi_w^t$ generated by a generic $w \in \realset^n$. In particular, this number $k$ does not depend on the choice of a generic $w$.
\end{theorem}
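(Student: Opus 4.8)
The plan is a double counting of the pairs consisting of a hyperbolic domain together with the attractive fixed point of $\varphi_w^t$ lying on its closure. First I would record two elementary facts about a fixed point $p$ of $\rho$. Fix canonical coordinates $(x_1, \dots, x_n)$ and an adapted basis $(v_1, \dots, v_n)$ at $p$; since $p$ has rank $0$ the normal form gives $X_{v_i} = x_i \partdiff{}{x_i}$ for every $i$, so writing $w = \sum_i \alpha_i v_i$ the flow reads $\varphi_w^t(x) = (x_1 \e^{-\alpha_1 t}, \dots, x_n \e^{-\alpha_n t})$. Genericity of $w$ forces every $\alpha_i \neq 0$, whence $p$ is an attractive fixed point of $\varphi_w^t$ precisely when all $\alpha_i > 0$, i.e.\ when $\Ind_p(w) = n$. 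Thus the attractive fixed points are exactly the fixed points of index $n$, and by hypothesis their number is $k$.

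Next I would analyze the hyperbolic domains meeting a given fixed point $p$. The $n$ invariant hypersurfaces through $p$ are locally $\set{x_i = 0}$ and cut the coordinate neighborhood $U_p$ into $2^n$ orthants, each contained in the complement of $\bigcup_i \set{x_i = 0}$ and hence in a single hyperbolic domain. Conversely, since each $\closure{\Ocal}$ is a curved simple polytope and $p$ is one of its vertices, $\closure{\Ocal} \cap U_p$ is a single closed orthant, so every domain whose closure contains $p$ occupies exactly one of the $2^n$ orthants. Distinct domains occupy distinct orthants, and every orthant is used, so there are exactly $2^n$ hyperbolic domains whose closure contains $p$.

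I would then specialize to an attractive fixed point $p$, where all $\alpha_i > 0$: in each of the $2^n$ orthants at $p$ the flow converges to $p$ as $t \to +\infty$, so $p$ is the attractive point $p_+$ of each of the $2^n$ domains meeting it. On the other hand, the discussion preceding the statement shows that every hyperbolic domain $\Ocal$ has a unique attractive fixed point $p_+ \in \boundary{\Ocal}$. Hence the map $\Ocal \mapsto p_+$ from the $F$ hyperbolic domains to the $k$ attractive fixed points is surjective with every fiber of cardinality $2^n$, giving $F = k \cdot 2^n$. Running the same argument with $-w$ in place of $w$ interchanges attractive and repulsive points and yields $F = k' \cdot 2^n$ with $k'$ the number of repulsive points; since $F$ is intrinsic to $\rho$, both $k = F/2^n$ and $k'$ are independent of the generic vector chosen.

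The step I expect to require the most care is the local claim that $\closure{\Ocal} \cap U_p$ is a single orthant, equivalently that a hyperbolic domain meets each of its vertices in exactly one orthant rather than folding back onto several. I would justify this from the curved simple polytope structure of $\closure{\Ocal}$ recalled in Section~\ref{s:preliminaries}: at a vertex of a simple $n$-polytope exactly $n$ facets meet and the local model is a single orthant $\set{x_1 \geq 0, \dots, x_n \geq 0}$, here transported by the normal form to the $n$ invariant hypersurfaces through $p$. Everything else reduces to the elementary double counting above.
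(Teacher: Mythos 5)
Your proof is correct and follows essentially the same route as the paper's: both arguments rest on the local normal form giving exactly $2^n$ hyperbolic domains around each fixed point, the observation that an attractive point of $\varphi_w^t$ is the attractive point $p_+$ for all $2^n$ domains adjacent to it, and the uniqueness of $p_+$ on each $\closure{\Ocal}$, combined into the same double count. Your write-up is somewhat more explicit about why each domain occupies exactly one orthant at a vertex, a point the paper leaves implicit, but the underlying argument is identical.
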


\begin{proof}
	Around a fixed point $p \in M$ of the action $\rho$ there are exactly $2^n$ hyperbolic domains. The local normal form theorem for totally hyperbolic actions implies that if $p$ is an attractive point of the flow $\varphi_w^t$ on the closure of some hyperbolic domain $\Ocal$, then it is locally an attractive point in neighborhood $U$ of $p$. In particular, it is an attractive point on the closure of any other hyperbolic domain around it. Since every hyperbolic domain of $\rho$ has a unique attractive fixed point on its boundary, it follows that there are exactly $k.2^n$ domains, where $k$ is the total number of attractive points of the flow $\varphi_w^t$ on $M$. Of course, the same proof can be done by considering repulsive points instead.
\end{proof}

\begin{remark}
	With the notation of Corollary~\ref{c:Morse_inequalities}, the above theorem implies that $c_0$ and $c_n$ are equal and do not depend on $w$.
\end{remark}

\begin{remark}
	\label{r:number_of_domains_2-sphere}
	When $M$ is the two-dimensional sphere $\sphere{2}$, we have a more precise result: the number of hyperbolic domains is a multiple of $8$. Indeed, it is equal to $4c_n$ by Theorem~\ref{t:number_of_domains_dimension_n}, where $c_n$ is the number of attractive points of some quasi Morse--Smale flow $\varphi_w^t$. But a point $p$ at the intersection of two loops $H_i$ and $H_j$ is attractive if and only if $w$ is in the convex cone $C_{ij}$ spanned by $v_i$ and $v_j$. It follows that
	\[ c_n = \sum_{\substack{1 \leq i < j \leq N \\ w \in C_{ij}}} \card (H_i \cap H_j). \]
	On the sphere, two closed loops intersect an even number of times so the above sum is even.
\end{remark}

Note that, as $w$ varies in $\realset^n$, a fixed point $x$ of $\rho$ takes all the possible states (attractive, repulsive or saddle) with respect to the flow $\varphi_w^t$ on $M$. So there exist different ``jigsaw puzzle'' decompositions of $M$ whose pieces are the union of $2^n$ hyperbolic domains around a fixed point.

\begin{figure}
	\centering
	\def\svgwidth{0.99\textwidth}
	\small%
	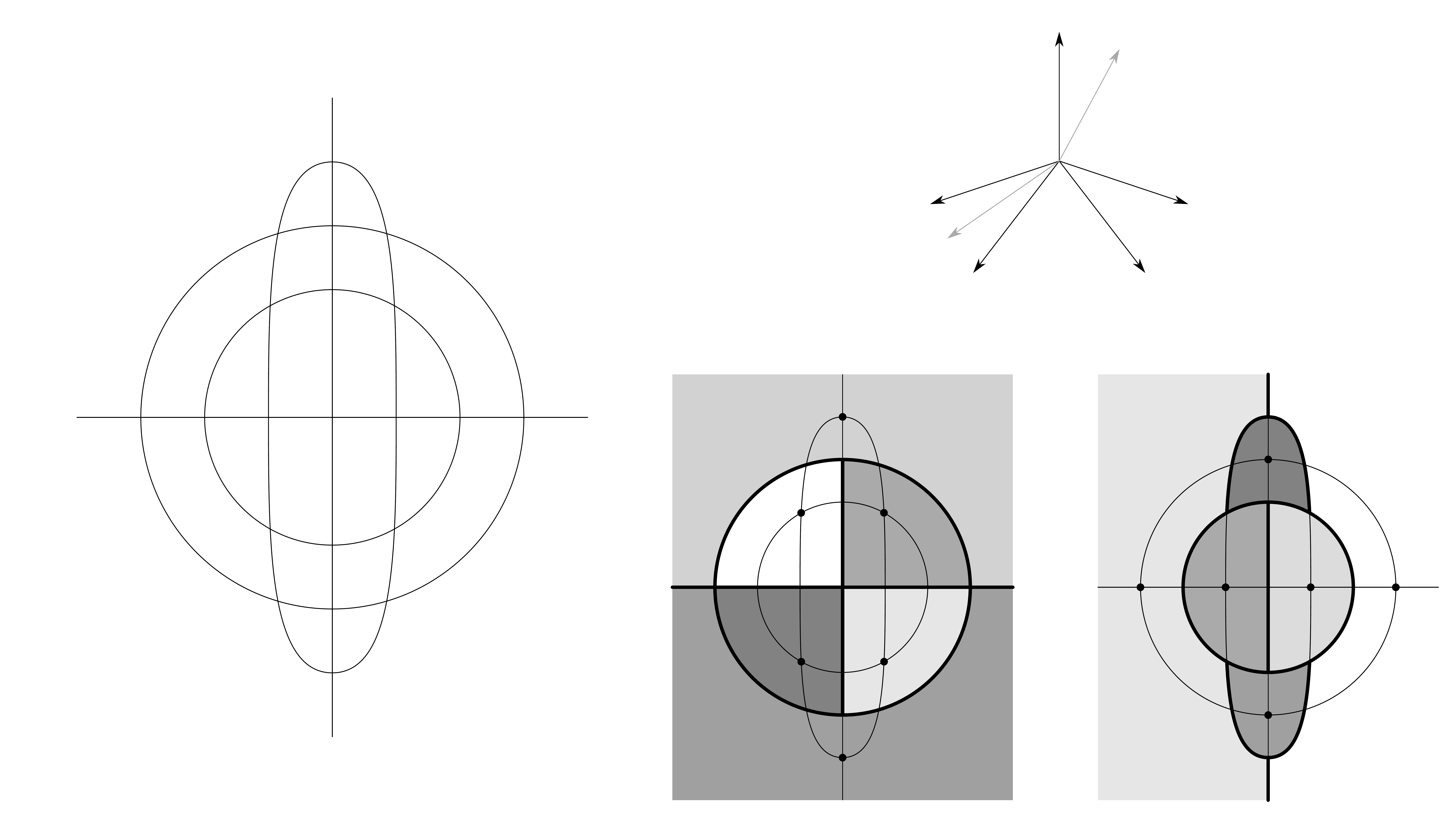
	\caption{Example of two different jigsaw puzzle decompositions for some action of $\realset^2$ on $\sphere{2}$}
	\label{f:jigsaw_puzzles}
\end{figure}

Figure~\ref{f:jigsaw_puzzles} shows two different such decompositions on an example. On the upper right of the figure is a complete fan in $\realset^2$ defined by five vectors $v_1, \dots, v_5$, corresponding to a totally hyperbolic action of $\realset^2$ on $\sphere{2}$. Next to it is the associated decomposition of $\sphere{2}$ into hyperbolic domains delimited by hypersurfaces $H_1, \dots, H_5$, after a stereographic projection from an intersection point between $H_3$ and $H_4$. Below this are two jigsaw puzzles decomposition of $\sphere{2}$ induced by the choice of two vectors $w_A$ and $w_B$ in different 2-dimensional cones in the fan. The emphasized vertices are the attractive points of the corresponding 1-dimensional flow on the sphere.

\begin{figure}
	\centering
	\def\svgwidth{0.9\textwidth}
	\small%
	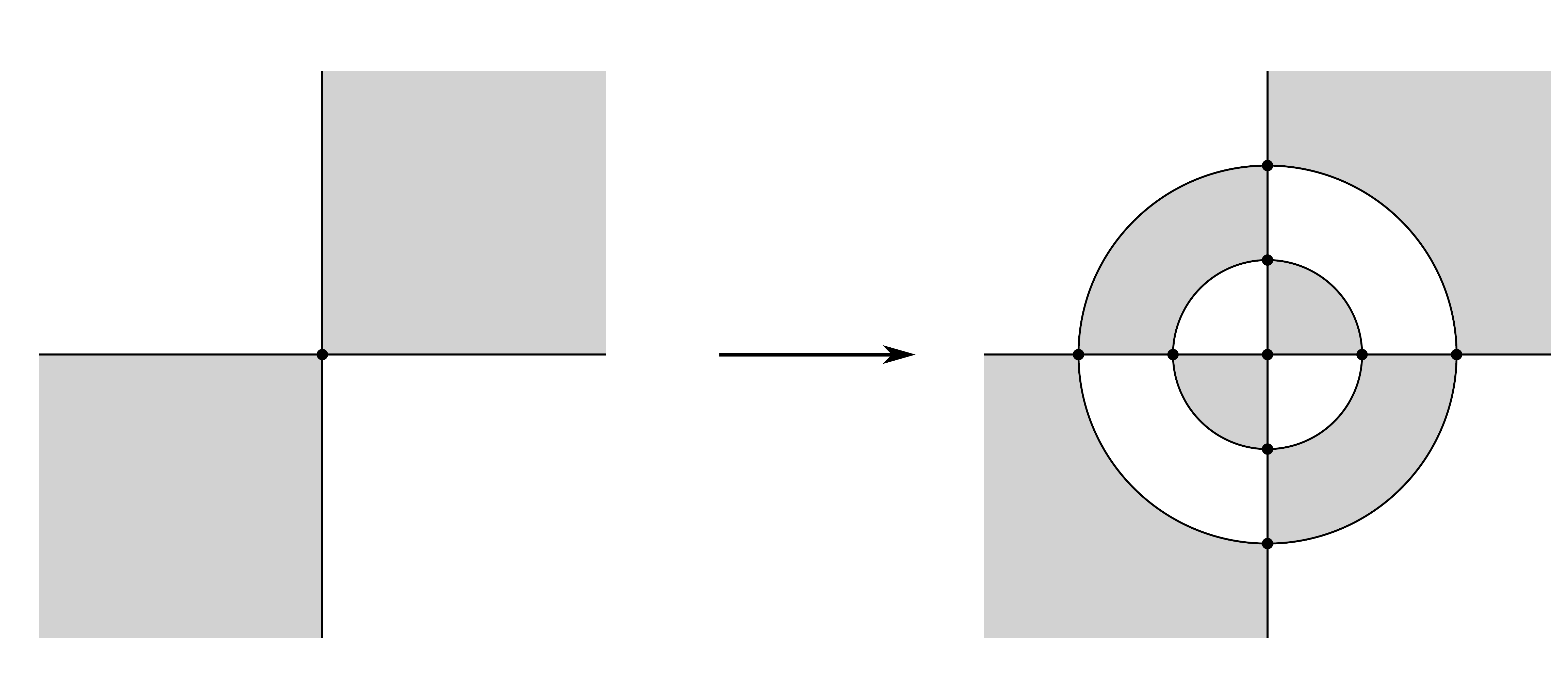
	\caption{Adding two spheres around a fixed point}
	\label{f:spheres_around_fixed_point}
\end{figure}

We are now interested in the construction of actions with a given number of hyperbolic domains.
Let us start with the following useful result.
\begin{proposition}
	\label{p:adding_spheres_around_fixed_point}
	Let $\rho : \realset^n \times M \rightarrow M$ be a totally hyperbolic action of $\realset^n$ on a $n$-dimensional compact manifold $M$. Let $x \in M$ be a fixed point of the action. It is the intersection of $n$ hypersurfaces $H_1, \dots, H_n$ corresponding to $(n-1)$-dimensional orbits of the action, and these hypersurfaces delimit $2^n$ hyperbolic domains $U_1, \dots, U_{2^n}$ around $x$. Then
	\begin{enumerate}
		\item the new decomposition of $M$ obtained by adding two small enough concentric $(n-1)$-spheres centered at $x$ can be realized as the hyperbolic domains of a totally hyperbolic action of $\realset^n$ on $M$,
		\item conversely, if there are two concentric $(n-1)$-spheres centered at $x$ intersecting only $H_1, \dots, H_n$ as in Figure~\ref{f:spheres_around_fixed_point}, then the new decomposition of $M$ obtained by removing these spheres can be realized as the hyperbolic domains of a totally hyperbolic action of $\realset^n$ on $M$.
	\end{enumerate}
\end{proposition}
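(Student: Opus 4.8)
The plan is to deduce both statements from the realization criterion stated at the end of Section~\ref{s:preliminaries}: to produce a totally hyperbolic action realizing a prescribed decomposition of $M$ by transverse hypersurfaces, it is enough to attach a vector of $\realset^n$ to each hypersurface so that the whole family spans a complete fan and, for every curved polytope of the decomposition, the subfamily of vectors attached to its facets spans a complete fan compatible with the combinatorics of that polytope. Since the spheres are concentric and small, everything happens inside a ball $B$ around $x$ meeting only $H_1, \dots, H_n$; outside $B$ the decomposition, and hence the fans attached to the untouched polytopes, are exactly those of $\rho$. So in both parts the problem reduces to a local computation near $x$. First I would fix canonical coordinates $(x_1,\dots,x_n)$ and an adapted basis $(v_1,\dots,v_n)$ at $x$, so that $H_i=\set{x_i=0}$ and the $2^n$ domains $U_1,\dots,U_{2^n}$ are the coordinate orthants. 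The crucial local fact is that, because $\varphi_w^t(x_1,\dots,x_n)=(x_1\e^{-\alpha_1 t},\dots,x_n\e^{-\alpha_n t})$, the limit $\lim_{t\to\infty}\varphi_w^t(p)=x$ holds \emph{for every} $p$ in \emph{any} orthant exactly when all $\alpha_i>0$. Hence the fan of each of the $2^n$ domains has the same $n$-dimensional corner cone at $x$, namely $C_{\set{x}}=\set{\alpha_1 v_1+\cdots+\alpha_n v_n \mid \alpha_i\geq 0}$, and in particular $v_1,\dots,v_n$ form a basis of $\realset^n$.

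For part~(1), choose $S\subset S'$ small enough that they meet only $H_1,\dots,H_n$ (possible since $x\notin H_j$ for $j>n$ and the $H_j$ are closed). I attach to the inner sphere the vector $v_S=-(v_1+\cdots+v_n)$ and to the outer sphere the vector $v_{S'}=v_1+\cdots+v_n$; note that these are genuinely different (indeed opposite) vectors, which is the conceptual heart of the statement. Each orthant is now cut into three pieces. The innermost piece (inside $S$) is combinatorially a simplex with facets carrying $v_1,\dots,v_n,v_S$; since $v_1+\cdots+v_n+v_S=0$ with positive coefficients, these $n+1$ vectors span a complete simplicial fan whose $n+1$ maximal cones match the $n+1$ vertices of the simplex, the cone at $x$ being $C_{\set{x}}$ as required. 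The outermost piece is the old domain $U_k$ with its $x$-corner truncated by $S'$; since $v_{S'}$ lies in the interior of $C_{\set{x}}$, its ray subdivides the cone $C_{\set{x}}$ into the $n$ cones $\set{\alpha v_{S'}+\sum_{j\neq i}\beta_j v_j\mid \alpha,\beta_j\geq 0}$, so the outer fan is simply the original fan of $U_k$ with the corner cone refined by $v_{S'}$, still complete and compatible.

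The step I expect to be the main obstacle is checking the shell (the piece between $S$ and $S'$), which has $n+2$ facets and two non-parallel cap facets. Its would-be fan consists of the $n$ inner-cap cones $\set{\alpha v_S+\sum_{j\neq i}\beta_j v_j}$ and the $n$ outer-cap cones $\set{\alpha v_{S'}+\sum_{j\neq i}\beta_j v_j}$. The computation to carry out is that the inner-cap cones tile exactly $\realset^n\setminus C_{\set{x}}$ (using $v_S=-\sum v_i$) while the outer-cap cones tile exactly $C_{\set{x}}$ (using $v_{S'}=\sum v_i$), so together they tile $\realset^n$; this is precisely where the opposite choices of $v_S$ and $v_{S'}$ are forced, and it explains why a single sphere would not work. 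Having verified the three local fans, together with the unchanged fans away from $B$ and the fact that the added rays do not destroy global completeness, the realization criterion yields the desired action.

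For part~(2), the argument runs in reverse. The hypotheses and Lemma~\ref{l:gluing_n-1_dim_orbits} give that $S$ and $S'$ already carry vectors $v_S,v_{S'}$ in the fan of $\rho$. Completeness of the fan of the inner simplex forces $v_1,\dots,v_n$ to be a basis and $v_S$ to be a negative combination of them, while completeness of the shell fan forces the outer-cap cones to tile the corner cone $C_{\set{x}}=\set{\sum\alpha_i v_i\mid \alpha_i\geq 0}$, hence $v_{S'}$ to lie in its interior. Removing both spheres merges the three pieces of each orthant back into $U_k$; I then define the fan of the merged polytope to be the fan of the old outer piece with the $v_{S'}$-subdivision of $C_{\set{x}}$ erased, i.e.\ with corner cone $C_{\set{x}}$ restored. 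Because the outer-cap cones tiled $C_{\set{x}}$, replacing them by $C_{\set{x}}$ keeps the tiling of $\realset^n$, so the merged fan is complete and compatible with the combinatorics of $U_k$; the fans of all other polytopes are unchanged. Applying the realization criterion once more produces a totally hyperbolic action whose hyperbolic domains are exactly the regions of the decomposition obtained after deleting $S$ and $S'$.
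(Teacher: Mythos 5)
Your proof follows essentially the same route as the paper's: reduce to the fan-compatibility criterion, put a vector $w'$ in the interior of the corner cone $C_{\set{x}}$ on the outer sphere and $w=-w'$ on the inner one (the paper allows any such $w'$, you take $\sum_i v_i$, which is fine up to the paper's caveat of perturbing it if it happens to coincide with an existing $v_i$), and check the three local fans of the inner simplex, the shell, and the truncated outer domain; your verification of the shell is in fact more explicit than the paper's.

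The one substantive point you skip is in part (2): before erasing the $v_{S'}$-subdivision you must check that the merged region $U_k$ is actually a curved polytope, i.e.\ that it is not bounded \emph{only} by $H_1,\dots,H_n$ (equivalently, that $V_k$ had facets other than $H_1,\dots,H_n$ and $S'$). If that degenerate configuration occurred, your ``merged fan'' would cover $\realset^n$ but would not be the fan of a simple polytope (the leftover cones could not be strongly convex), so the realization criterion would not apply. The paper devotes a paragraph to excluding this case, using the compatibility of the original fan with the shell $C_k$ and the inner simplex $T_k$ to show that $v_S$ and $v_{S'}$ cannot both sit where such a degeneration would force them. This is a fillable gap rather than a wrong turn, but it is a step your argument asserts rather than proves.
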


\begin{proof}
	(1) Adding two spheres $S$ and $S'$ around $x$ splits each domain $U_i$ into three domains $T_i$, $C_i$ and $V_i$ which are ``curved polytopes'' with respectively $n+1$, $n+2$ and $f+1$ faces, where $f$ is the number of faces of $U_i$ (see Figure~\ref{f:spheres_around_fixed_point} for an illustration in dimension 2). The other domains remain unchanged.
	
	Let $v_1, \dots, v_N$ be vectors in $\realset^n$ generating a complete fan $F$ corresponding to the hyperbolic action $\rho$, indexed in such a way that the vector $v_i$ corresponds to the hypersurface $H_i$ when $1 \leq i \leq n$. We want to construct two vectors $w$ and $w'$, associated to $S$ and $S'$ respectively, such that the complete fan $F'$ generated by $v_1, \dots, v_N, w, w'$ is compatible with the new decomposition of $M$. Take $w'$ in the cone spanned by $v_1, \dots, v_n$. Then the compatibility of $F$ with $U_i$ implies that $F'$ will be compatible with $V_i$. Now set $w = -w'$ (or a vector close enough to $-w'$ if the latter coincide with some $v_i$), and the fan $F'$ is compatible with the $T_i$ and $C_i$.
	
	(2) With the same notation as above, remark first that the domains $U_i$ obtained when removing the spheres $S$ and $S'$ have at least three corners, that is are not only bounded by the hypersurfaces $H_1, \dots, H_n$. Indeed, suppose the converse is true, then it means that before removing the spheres, the domain $V_i$ was only bounded by $H_1, \dots, H_n$ and $S'$. Then by compatibility of the fan $F'$ with the domain $C_i$, it follows that either $v_{N+1}$ or $v_{N+2}$ is not in the convex cone spanned by $v_1, \dots, v_n$. But that implies that $F'$ is not compatible either with $T_i$ or $V_i$, which contradicts our assumption. It follows that $U_i$ is a curved polytope.
	
	Now we have to show that the fan $F$ obtained by removing $v_{N+1}$ and $v_{N+2}$ from the fan $F'$ is a complete fan compatible with this new decomposition of $M$. It suffices to check the compatibility with each domain $U_i$ since the other domains are unchanged or deleted. The compatibility of $F$ with the combinatorics of the faces of $U_i$ is guaranteed by the previous compatibility of $F'$ with the domain $V_i$. It remains to check that the cones in the sub-fan of $F$ corresponding to $U_i$ are all convex: this comes from the compatibility of the fan $F'$ with the faces $C_i$ and $T_i$.
\end{proof}

With this proposition, once an example of hyperbolic action has been given, it is possible to construct new hyperbolic actions with more hyperbolic domains on the same manifold $M$.

\begin{corollary}
	\label{c:augment_domains}
	Let $M$ be a $n$-dimensional compact manifold. Suppose there exists a totally hyperbolic action $\rho$ on $M$.
	\begin{enumerate}
		\item If $D$ is the number of hyperbolic domains of $\rho$, then for any $k \geq 0$, there exists a totally hyperbolic action on $M$ with $D'=D +k2^{n+1}$ hyperbolic domains.
		\item Let $p$ be the number of faces of some hyperbolic domain of $\rho$. Then for any $q \geq p$, there exists a totally hyperbolic action on $M$ admitting a hyperbolic domain with $q$ faces.
	\end{enumerate}
\end{corollary}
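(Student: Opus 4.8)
The plan is to derive both statements by iterating Proposition~\ref{p:adding_spheres_around_fixed_point}(1), so that the essential geometric work is already done and what remains is a bookkeeping of domain and face counts together with a check that the construction can be repeated.

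For part (1), I would fix a fixed point $x$ of $\rho$, around which there are exactly $2^n$ hyperbolic domains $U_1, \dots, U_{2^n}$. Adding two concentric spheres centered at $x$ replaces each $U_i$ by the three domains $T_i, C_i, V_i$, so the total number of domains increases by $2 \cdot 2^n = 2^{n+1}$, and by Proposition~\ref{p:adding_spheres_around_fixed_point}(1) the new decomposition is realized by a totally hyperbolic action on $M$. The point enabling iteration is that $x$ remains a fixed point of the new action: it is still the common intersection of $H_1, \dots, H_n$, now appearing as the shared vertex of the innermost domains $T_i$. I would therefore add a second, smaller pair of spheres nested inside the first, again raising the count by $2^{n+1}$, and repeat $k$ times to reach $D + k\, 2^{n+1}$ hyperbolic domains.

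For part (2), let $U$ be a hyperbolic domain with $p$ faces; the case $q = p$ requires nothing. For $q > p$ I would apply the same operation, but now at a vertex $x$ of $U$. Since $\closure{U}$ is a simple curved polytope, exactly $n$ of the hypersurfaces meet at $x$, so Proposition~\ref{p:adding_spheres_around_fixed_point}(1) applies and splits $U = U_i$ into $T_i$, $C_i$, and an outermost piece $V_i$ which, inheriting the faces of $U$ together with the outer sphere, carries $p+1$ faces. The new action thus contains a domain with $p+1$ faces, itself a simple curved polytope with vertices, so the operation can be reapplied at one of its vertices to obtain a domain with $p+2$ faces, and so on. Iterating $q - p$ times produces a totally hyperbolic action with a hyperbolic domain having exactly $q$ faces.

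The only genuinely delicate point, beyond the arithmetic, is to confirm that the iteration never stalls. In part (1) one must ensure that each successive pair of spheres can be taken small enough to nest around $x$ without meeting the previously added spheres or any other hypersurface; in part (2) one must confirm that each newly produced outer domain $V_i$ is again a simple curved polytope possessing a vertex at which the proposition can be invoked. Both facts follow from the local normal form near $x$ together with the property that the added spheres meet only $H_1, \dots, H_n$ in a neighborhood of $x$, so I expect no serious obstruction.
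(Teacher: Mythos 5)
Your proposal is correct and follows essentially the same route as the paper: both parts are obtained by iterating Proposition~\ref{p:adding_spheres_around_fixed_point}, with the count increasing by $2\cdot 2^n$ domains per step for (1) and the outer piece $V$ gaining one face per step for (2). Your extra remarks on why the iteration can be repeated (the fixed point survives, the new domain is again a curved polytope with a corner) are sound and merely make explicit what the paper leaves implicit in the word ``recursively.''
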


\begin{proof}
	It suffices to apply recursively the construction given by Proposition~\ref{p:adding_spheres_around_fixed_point}.
	
	For (1), apply the construction to any fixed point of $\rho$. Each one of the $2^n$ domains $U_i$ is split into exactly three domains, so the new decomposition has $2 \times 2^n$ more hyperbolic domains.
	
	For (2), apply the construction to a fixed point of $\rho$ which is the corner of the domain $U$ with $p$ faces. We saw that $U$ is split into three domains $T$, $C$ and $V$, where the domain $V$ has $p+1$ faces.
\end{proof}

Let us apply this to the case when $M$ is a closed surface. This will not only provide information on the number of hyperbolic domains, but also on the number of one-dimensional orbits and fixed points of the action. Indeed, in dimension 2, the decomposition of a surface $\Sigma$ into orbits of a totally hyperbolic action $\rho : \realset^2 \times \Sigma \rightarrow \Sigma$ can be seen as the embedding of some graph $\Gamma$ in $\Sigma$, in such a way that the fixed points, one-dimensional orbits and hyperbolic domains of $\rho$ correspond respectively to the vertices, edges and faces of the embedded graph $\Gamma$.

First, since the graph $\Gamma$ is embedded in $\Sigma$, it has to satisfy the well-known Euler's formula
\[ V - E + F = \chi(\Sigma), \]
where $V$, $E$ and $F$ are respectively the number of vertices, edges and faces of the embedded graph $\Gamma$, and $\chi(\Sigma)$ is the Euler characteristic of the surface $\Sigma$.

Moreover, the local structure of fixed points of a totally hyperbolic action implies that each vertex corresponds to an intersection between exactly two one-dimensional orbits. Then each vertex is adjacent to exactly 4 edges, which are necessarily distinct for the faces to be simply connected ``curved polygons''. Thus $\Gamma$ is what is a called a 4-valent simple graph.

\begin{proposition}
	\label{p:graph_is_4_valent}
	Let $\Gamma$ be the embedded graph induced by a totally hyperbolic action of $\realset^2$ on a surface $\Sigma$.
	
	Then $\Gamma$ is a 4-valent simple graph. In particular, it satisfies the following identities:
	\[ \begin{cases}
	E = 2V \\
	F = V + \chi(\Sigma)
	\end{cases} \]
\end{proposition}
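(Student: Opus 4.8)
The plan is to verify the three assertions of the statement—$4$-valence, simplicity, and the two numerical identities—in that order, since the identities follow formally once $4$-valence is in hand.

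First I would read off the $4$-valence from the local normal form. A vertex of $\Gamma$ is a fixed point $p$ of $\rho$, that is a point of rank $r = 0$, so in dimension $n = 2$ we have $h = 2$ and there are canonical coordinates $(x_1, x_2)$ centered at $p$ with $X_{v_1} = x_1 \partdiff{}{x_1}$ and $X_{v_2} = x_2 \partdiff{}{x_2}$. The flow of $X_{v_2}$ acts by $x_2 \mapsto x_2 \e^{t}$, which preserves the sign of $x_2$ and cannot cross the fixed point, so near $p$ the locus $\set{x_1 = 0}$ splits into the two distinct $1$-dimensional orbits $\set{x_1 = 0, \pm x_2 > 0}$, and likewise $\set{x_2 = 0}$ splits into $\set{x_2 = 0, \pm x_1 > 0}$. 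These four arcs are exactly the edges of $\Gamma$ incident to $p$, so $\deg(p) = 4$ at every vertex.

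Next I would argue simplicity. By the classification recalled in Section~\ref{s:preliminaries}, the closure of each hyperbolic domain is a contractible curved polytope, here a curved polygon, whose boundary is a cyclic sequence of distinct edges meeting at distinct corners. A loop at a vertex or a pair of edges sharing both endpoints would force the boundary of some adjacent face to fail to be an embedded polygon; hence the four arcs at each vertex are genuinely distinct edges and no two vertices are joined by more than one edge, so $\Gamma$ is a simple $4$-valent graph.

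The two identities then follow by elementary counting. Summing degrees over all vertices and applying the handshake lemma gives $2E = \sum_{p} \deg(p) = 4V$, hence $E = 2V$. Substituting into Euler's formula $V - E + F = \chi(\Sigma)$ yields $V - 2V + F = \chi(\Sigma)$, that is $F = V + \chi(\Sigma)$. I expect the only genuinely delicate point to be the simplicity claim, which rests on the curved-polygon structure of the faces established in the classification; the $4$-valence is a direct reading of the normal form, and both identities are immediate once $4$-valence is known.
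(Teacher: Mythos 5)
Your proposal is correct and follows essentially the same route as the paper: the paper also derives $4$-valence from the local structure of fixed points, justifies simplicity by the requirement that faces be simply connected curved polygons, and then obtains the identities via the handshake lemma and Euler's formula. The only difference is organizational — the paper places the $4$-valence and simplicity discussion in the text preceding the proposition and reserves the formal proof for the two counting identities.
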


\begin{proof}
	The first identity is a classical result in graph theory, of which we recall the proof here. Because the graph is 4-valent, the sum
	\[ \sum_{v \text{ vertex of } \Gamma} (\text{number of edges adjacent to } v) \]
	is equal to $4V$. Then remark that each edge of $\Gamma$ contributes to the sum exactly twice (once for each of its end points) so the above sum is also equal to $2E$. The second identity follows using Euler's formula.
\end{proof}

We are now able to prove that, in dimension 2, any number of domains compatible with Theorem~\ref{t:number_of_domains_dimension_n} and Remark~\ref{r:number_of_domains_2-sphere} is realizable.

\begin{proposition}
	\label{p:realization_number_of_domains_on_surface}
	Let $k \geq 1$.
	\begin{itemize}
		\item The $2$-sphere $\sphere{2}$ admits a totally hyperbolic action with $8k$ hyperbolic domains.
		\item Any closed oriented surface of genus $g \geq 1$ admits a totally hyperbolic action with $4k$ hyperbolic domains.
		\item Any closed non-orientable surface admits a totally hyperbolic action with $4k$ hyperbolic domains.
	\end{itemize}
\end{proposition}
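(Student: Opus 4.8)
The plan is to exhibit, on each surface, one or two explicit base configurations of loops together with a compatible complete fan, invoke the converse part of the classification recalled at the end of Section~\ref{s:preliminaries} to turn each configuration into a genuine totally hyperbolic action, and then saturate the admissible numbers of domains by repeatedly applying the addition of concentric spheres from Corollary~\ref{c:augment_domains}, which on a surface increases the number of hyperbolic domains by $2^{n+1} = 8$.

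For $\sphere{2}$ I would take the three great circles cut out by the coordinate planes. They meet transversally in the six points $(\pm 1, 0, 0)$, $(0, \pm 1, 0)$, $(0,0,\pm 1)$, split $\sphere{2}$ into its $8$ spherical octants, and each octant is a curved triangle meeting all three circles. Assigning to the three circles three rays $v_1, v_2, v_3 \in \realset^2$ at pairwise angles $2\pi/3$ produces a complete fan with three two-dimensional cones, and one checks that the induced subfan at every octant is complete and compatible with the triangle. This gives a base action with $8$ domains, and Corollary~\ref{c:augment_domains} then yields actions with $8 + 8k$ domains for every $k \geq 0$, i.e. all multiples of $8$, which by Remark~\ref{r:number_of_domains_2-sphere} is exactly the list of admissible values.

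For the remaining surfaces the admissible numbers of domains are all positive multiples of $4$ (Theorem~\ref{t:number_of_domains_dimension_n}). On $\torus{2} = \realset^2/\integerset^2$ the grid formed by two horizontal and $2j$ vertical circles has $4j$ square domains, and assigning $\pm e_2$ to the two horizontal circles and alternately $\pm e_1$ to the $2j$ vertical ones (possible precisely because their number is even) realizes the complete ``square'' fan $\set{\pm e_1, \pm e_2}$ at every domain; letting $j$ vary already produces every multiple of $4$, and the Klein bottle is handled by the analogous grid on its flat model. For $\projspace{2}$ I would instead quotient the octant example on $\sphere{2}$ by the free, circle-preserving antipodal involution, obtaining a decomposition with $3$ vertices, $6$ edges and $4$ curved-triangle faces carrying the same three-ray fan, hence a base action with $4$ domains; descending an equivariant refinement of a $16$-domain sphere example then gives a second base with $8$ domains. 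Since the move of Corollary~\ref{c:augment_domains} only changes the count by $8$, having base examples in both residues $4$ and $0$ modulo $8$ suffices to reach every multiple of $4$.

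For a general closed surface I would proceed by induction on the genus (resp. the non-orientable genus), realizing the passage from $\Sigma$ to $\Sigma \# \torus{2}$ (resp. $\Sigma \# \projspace{2}$) by a local surgery: excise two small disks lying inside hyperbolic domains of an existing example, glue in a standard tube (resp. crosscap) already equipped with a grid-like decomposition and its square fan, and check that the new cutting loops still bound curved polygons whose subfans are complete. This adds a fixed number of domains at each step, so that, combined with Corollary~\ref{c:augment_domains}, both mod-$8$ residues remain available on every surface. The main obstacle is precisely this verification of fan compatibility across the surgery: one must arrange the vectors carried by the gluing circles so that the ``curved polytope plus complete fan'' condition of the converse classification holds for every domain created or modified by the move, while keeping the global family of vectors coherent. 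Once this local compatibility is secured, the count is controlled by the relation $F = V + \chi(\Sigma)$ of Proposition~\ref{p:graph_is_4_valent} and completed by Corollary~\ref{c:augment_domains}.
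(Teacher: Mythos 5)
Your base examples on $\sphere{2}$, $\torus{2}$, the Klein bottle and $\projspace{2}$ are fine, and the grid construction on the torus is actually more economical than what the paper does, since it realizes every multiple of $4$ directly without invoking Corollary~\ref{c:augment_domains}. The overall strategy (base examples in both residues modulo $8$, then saturate with Corollary~\ref{c:augment_domains}) is also the paper's. The problem is the passage to arbitrary genus. Your inductive surgery step --- excise two small disks lying \emph{inside} hyperbolic domains and glue in a tube or crosscap --- does not produce a decomposition of the required type: removing an open disk from the interior of a hyperbolic domain leaves an annulus, whereas the closure of every hyperbolic domain must be a contractible curved polytope whose boundary is stratified by lower-dimensional orbits. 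So the domains you modify are destroyed, not merely re-cut, and the difficulty is not only the fan compatibility you flag but the combinatorial and topological validity of the decomposition itself. You would have to route the new cutting circles through existing vertices and edges so that every resulting face is again a disk with at least three corners, and you give no construction doing this; as written the induction does not get off the ground for genus $\geq 2$ or for non-orientable surfaces beyond the Klein bottle and $\projspace{2}$.

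The paper avoids surgery altogether: it embeds $\Sigma_g$ in $\realset^3$ symmetrically with respect to the three coordinate planes and cuts along them, obtaining $8$ domains each with $g+2$ sides (the action is built on one domain and propagated by reflections); combined with the known $4$-domain examples from the literature and Corollary~\ref{c:augment_domains} this settles the orientable case for every genus at once. For non-orientable surfaces it realizes $\Sigma'$ as the quotient of $\Sigma_g$ by $\sigma(x,y,z) = (-x,-y,-z)$ and descends $\sigma$-invariant decompositions with $8$ and $16$ domains (the latter obtained by adding two parallel cutting planes), yielding quotient actions with $4$ and $8$ domains. If you want to salvage your approach you should replace the local connected-sum move by such a global symmetric construction, or else supply an explicit, fan-compatible local model for the surgery in which every new and modified face is a genuine curved polygon.
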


\begin{proof}
	Consider the examples of totally hyperbolic actions on closed surfaces given in~\cite{camacho1973morse} and~\cite{zung2014geometry}: those are examples with $8$ hyperbolic domains in the case of the sphere, and $4$ hyperbolic domains in the case of any other closed surface. According to Corollary~\ref{c:augment_domains}, we can increase this number by $8$ indefinitely, which prove the statement for the sphere. For the other closed surfaces, we have to prove that there also exist examples with $8$ hyperbolic domains in order to complete the proof.
	
	A decomposition into $8$ hyperbolic domains of a surface $\Sigma_g$ of genus $g \geq 0$ is given in~\cite{zung2014geometry}: embed $\Sigma_g$ in $\realset^3$ in such a way that it is symmetric with respect to the planes $\set{x=0}$, $\set{y=0}$ and $\set{z=0}$, and cut the surface along these planes. It splits $\Sigma_g$ into $8$ polygons with $g + 2$ sides, and to show that this decomposition can be realized as the hyperbolic domains of totally hyperbolic action of $\realset^2$ on $\Sigma_g$, it suffices to construct explicitly the action on one of these polygons and then extend it to the whole surface using reflections.
	
	\begin{figure}
		\centering
		\def\svgwidth{0.9\textwidth}
		\small%
		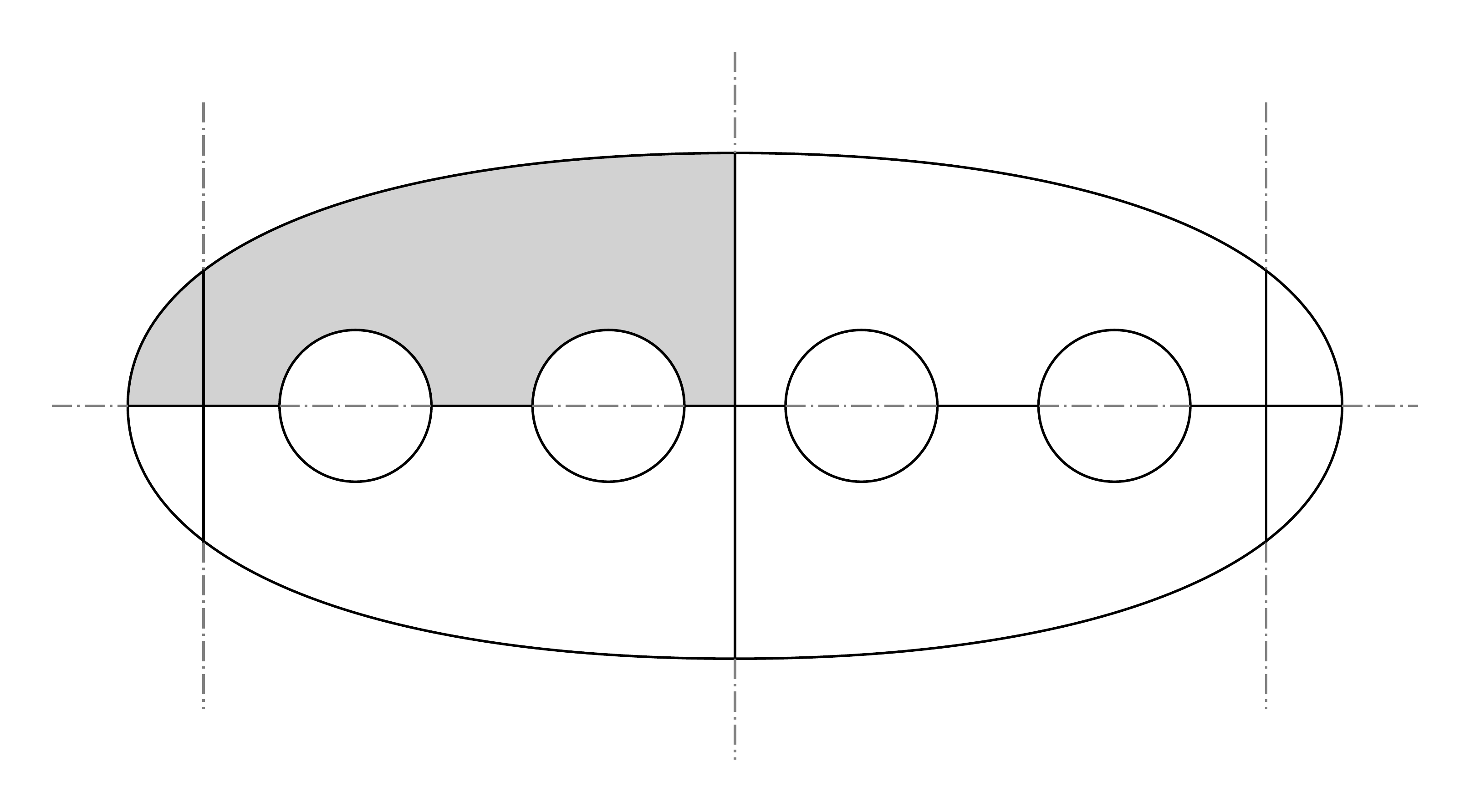
		\caption{Decomposition of a surface of genus $g \geq 0$ into $16$ domains}
		\label{f:sixteen_domains}
	\end{figure}
	
	Now, we will use the fact that any non-oriented closed surface $\Sigma'$ can be realized as the quotient of some $\Sigma_g$ embedded in $\realset^3$ as above by the involution $\sigma : (x,y,z) \mapsto (-x,-y,-z)$. Any totally hyperbolic action on $\Sigma_g$ which is invariant with respect to $\sigma$ descends to a totally hyperbolic action on $\Sigma'$. Consider again the surface $\Sigma_g$ embedded in $\realset^3$ in a symmetric way and cut along the three canonical planes, and now cut it again along two new parallel planes $P_1$ and $P_2$ as illustrated in Figure~\ref{f:sixteen_domains}. This gives a decomposition of $\Sigma_g$ into $16$ domains, and this decomposition is invariant with respect to $\sigma$. The colored part on the figure consists in two domains: a triangle $T$ and a $n$-gon $U$, and the edges of $T$ are issued from the same loops as three consecutive edges of $U$. Any fan compatible with $T$ can be completed into a fan compatible with $U$. It defines a totally hyperbolic action on the colored part, that we extend to $\Sigma_g$ using symmetries with respect to the three canonical planes. The hyperbolic domains of this action correspond exactly to the $16$ pieces of our decomposition, and since the involution $\sigma$ identifies these pieces pairwise, the corresponding hyperbolic action on the quotient manifold $\Sigma'$ has $8$ hyperbolic domains.
\end{proof}

\section{The case of the $2$-dimensional sphere}
\label{s:on_the_2_sphere}

In this section, we investigate the case where the surface $\Sigma$ is the 2-dimensional sphere $\sphere{2}$. In addition to the constants $V$, $E$ and $F$ defined previously, we denote by $N$ the number of closures of one-dimensional orbits of the action, and we recall that the latter are non-intersecting loops $L_1, \dots, L_N$ on $\sphere{2}$.

It is possible to color the faces of the plane graph $\Gamma$ with black and white in such a way that any two adjacent faces have different colors. Indeed, such a coloring can be constructed recursively. Start with a white sphere. According to the Jordan curve theorem, any loop $L_i$ splits the sphere $\sphere{2}$ into two connected components. Every time a loop is added to the sphere, leave one of the connected component unchanged, and swap the white and black colors on the other component.

\begin{figure}
	\centering
	\def\svgwidth{0.6\textwidth}
	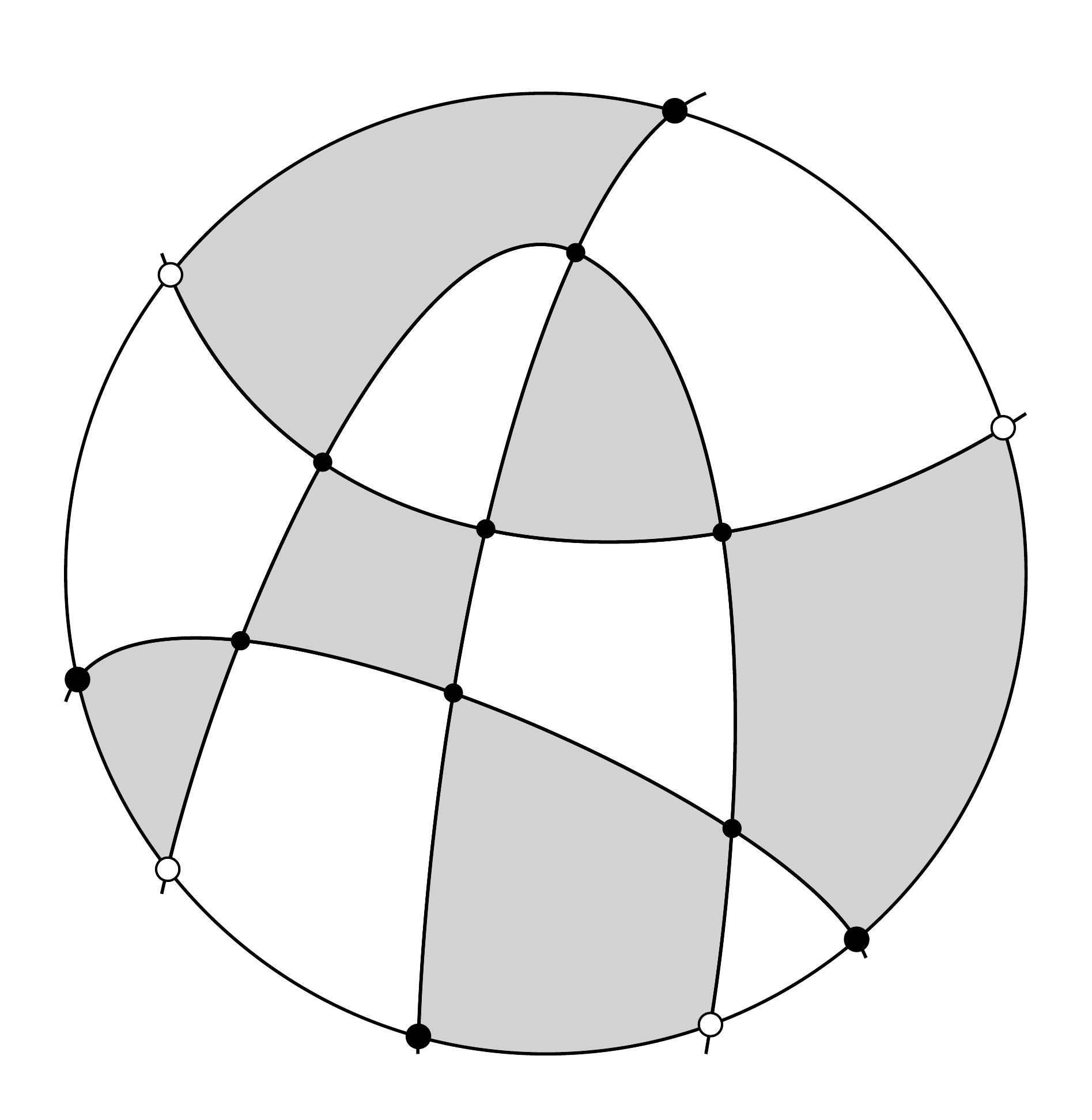
	\caption{Domains in a connected component of $\sphere{2} \setminus L_i$}
	\label{f:circle}
\end{figure}

\begin{proposition}
	\label{p:black_and_white_domains_on_whole_sphere}
	\label{p:black_and_white_domains_inside_circle}
	Suppose we have such a bi-coloring of the domains of $\rho$. For a subset $U \subset \sphere{2}$, denote by $F_B(U)$ (res. $F_W(U)$) the number of black domains (res. white domains) included in $U$. Then:
	\begin{enumerate}
		\item the numbers $F_B(\sphere{2})$ and $F_W(\sphere{2})$ are equal,
		\item if $U$ is on of the two connected components of the complement $\sphere{2} \setminus L_i$ of some loop, then $F_B(U)$ and $F_W(U)$ are equal.
	\end{enumerate}
\end{proposition}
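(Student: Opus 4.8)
The plan is to encode the colouring by winding-number parity and to reduce statement (1) to statement (2), which is the substantial one. First I would record the basic \emph{alternation lemma}: crossing any one of the loops flips the colour, so if the loops are oriented and $n(f)\in\integerset$ denotes the total winding number of $L_1\cup\cdots\cup L_N$ around a point of the face $f$, then (say) $f$ is white exactly when $n(f)$ is even. Two consequences follow immediately: the faces adjacent to a fixed loop along one of its sides alternate in colour as one travels along that loop, and every face of $\sphere{2}\setminus(L_1\cup\cdots\cup L_N)$ lies entirely in one of the two components of $\sphere{2}\setminus L_i$. In these terms $F_W(U)-F_B(U)=\sum_{f\subset U}(-1)^{n(f)}$, and since every face lies in $U$ or in the other component $U'$, statement (2) applied to \emph{both} components of $\sphere{2}\setminus L_i$ yields statement (1) at once. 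So it suffices to establish (2).

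For (2) I would exploit that two simple closed curves on $\sphere{2}$ always meet in an even number of points. Write $U$ for the component inside $L_i$ and set $n'(f)=n(f)-[\,f\subset U\,]$ (an Iverson bracket), i.e. the winding number of the remaining loops $\{L_k\}_{k\neq i}$ alone. Splitting the parity according to the two sides of $L_i$ gives
\[ \sum_{f\subset U}(-1)^{n(f)}-\sum_{f\subset U'}(-1)^{n(f)}=-\sum_{\text{all }f}(-1)^{n'(f)}, \]
and, reorganised over the faces $g$ of the deleted arrangement $\Gamma_i=\bigcup_{k\neq i}L_k$, the right-hand sum equals $\sum_{g}(-1)^{n'(g)}$ plus the alternating sum of $(-1)^{n'}$ over the successive arcs that the other loops cut out on $L_i$. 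This last sum runs over a cyclic sequence of even length, because $L_i$ meets each $L_k$ in an even number of points, so it vanishes. Hence the displayed quantity equals $-\big(F_W-F_B\big)(\Gamma_i)$. Combined with statement (1) for the full arrangement this forces $\sum_{f\subset U}(-1)^{n(f)}=0$, which is exactly (2). In short, (2) for $L_i$ is equivalent to (1) for the arrangement obtained by deleting $L_i$.

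Thus everything reduces to proving (1) for the arrangements that occur, and this is where I expect the real difficulty. The natural route is induction on $N$, adding one loop $L$ that crosses the current arrangement at $2m\ge 2$ points: the even-cycle argument again annihilates the contribution of the new arcs, and the change in $\sum_f(-1)^{n(f)}$ collapses to $-2\sum_{f\subset A}(-1)^{n_{\mathrm{old}}(f)}$, where $A$ is the disk bounded by $L$. The hard part will be showing that this interior sum vanishes; it is simply false for arbitrary curve arrangements, since two disjoint nested loops already give $F_W-F_B=1$. The proof must therefore use that the pieces are genuine hyperbolic domains: each $\closure{\Ocal}$ is a curved polytope, so in particular every domain has at least three sides and no loop can cut a bigon off $L_i$. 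This excludes precisely the nested/bigon configurations that destroy the balance, and I would make the induction go through by ordering the loops so that every partial arrangement remains realisable as such a decomposition, tracking the colour alternation along each newly added loop.
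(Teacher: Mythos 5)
Your argument has a genuine gap, and you have in fact located it yourself: everything is reduced, by formally correct parity bookkeeping, to the vanishing of $\sum_{f\subset A}(-1)^{n(f)}$ over the faces inside a disk bounded by a loop --- but this is precisely statement (2), i.e.\ the entire content of the proposition, and it is never proved. As you observe, the claim is false for arbitrary arrangements of closed curves (two disjoint nested loops already violate it), so some input from the hyperbolic structure is indispensable; the only such input you invoke is a sketch of an induction in which ``every partial arrangement remains realisable'' as a hyperbolic decomposition. That step is unsupported and doubtful: deleting a loop $L_i$ from the arrangement of singular curves of a totally hyperbolic action does not in general produce the singular curves of another totally hyperbolic action (the merged regions need not be curved polytopes, and the remaining vectors need not form compatible complete fans for them), so the induction has no legitimate base or step. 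In its current form the proposal is a chain of equivalences between unproven statements.

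The paper closes exactly this gap with a short direct argument that you may want to compare with. To each corner of a domain lying on $L_i\cap L_j$ one assigns the angle $\theta_{i,j}\in(0,\pi)$ between the fan vectors $v_i$ and $v_j$; completeness of the fan attached to each hyperbolic domain forces the corner angles of every domain to sum to $2\pi$, so $S_B(U)=2\pi F_B(U)$ and $S_W(U)=2\pi F_W(U)$. Each vertex contributes four corners with the same angle, two in black domains and two in white ones (and, for a vertex on $\boundary{U}=L_i$, exactly one of each colour inside $U$), whence $S_B(U)=S_W(U)$ and both parts follow at once. This is where the hyperbolicity genuinely enters --- through the angle-sum identity $\theta(D)=2\pi$ --- and it is the ingredient your combinatorial reduction still needs. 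If you wish to salvage your approach, you must either import this angle argument or find another quantitative consequence of the fan structure that rules out the nested configurations; the parity formalism alone cannot do it.
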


\begin{proof}
	Let $v_1, \dots, v_N$ be a fan compatible with the considered action, and denote by $\theta_{i,j}$ the value of the (non-oriented) angle between $v_i$ and $v_j$. If $D$ is a domain whose boundary is made of the loops $L_{i_1}, \dots, L_{i_k}$ in this cyclic order, the corners of $D$ can be associated to the angles $\theta_{i_1, i_2}, \theta_{i_2, i_3}, \dots, \theta_{i_k, i_1}$, and the sum
	\[ \theta(D) = \theta_{i_1, i_2} + \theta_{i_2, i_3} + \cdots + \theta_{i_{k-1}, i_k} \]
	of these angles is equal to $2\pi$. For a subset $U \subset \sphere{2}$, denote by $S_B(U)$ (res. $S_W(U)$) the sum of the $\theta(D)$ on every black domain (res. white domain) $D$ included in $U$. By definition, $S_B(U) = 2\pi F_B(U)$ and $S_W(U) = 2\pi F_W(U)$.
	
	Remark that $S_B(\sphere{2})$ and $S_W(\sphere{2})$ have exactly the same terms. Indeed, each intersection between two loops $L_i$ and $L_j$ creates four corners with associated angles $\theta_{i,j} = \theta_{j,i}$. Two of these corners lie in black domains while the two others lie in white domains. It follows that $S_B(\sphere{2}) = S_W(\sphere{2})$ and then $F_B(\sphere{2}) = F_W(\sphere{2})$.
	
	Similarly, if $U$ is a connected component of $\sphere{2} \setminus L_i$, $F_B(U) = F_W(U)$ because $S_B(U)$ and $S_W(U)$ have exactly the same terms too. Indeed, as before, a vertex in the interior of $U$ gives four terms: two in $S_B(U)$ and two $S_W(U)$. A vertex on $\boundary{U} = L_i$ creates four corners. Only two of them lie in $U$, and they have different colors (see Figure~\ref{f:circle}). It provides two terms, one in $S_B(U)$ and one in $S_W(U)$.
\end{proof}

\begin{figure}
	\centering
	\def\svgwidth{0.6\textwidth}
	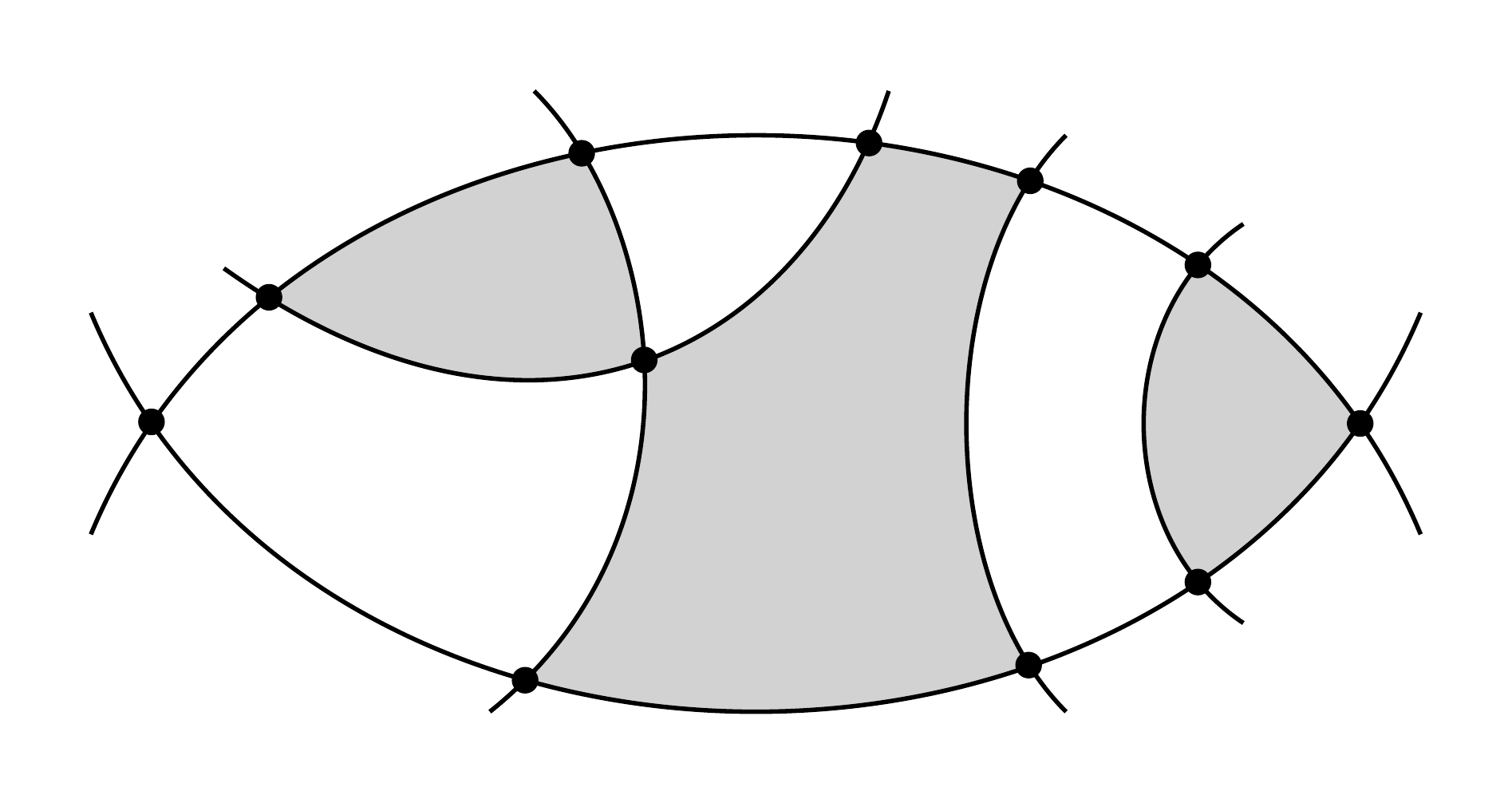
	\caption{An eye between two loops}
	\label{f:eye}
\end{figure}

If two loops $L_i$ and $L_j$ intersect, we call \emph{eye} between $L_i$ and $L_j$ a connected component $U$ of $\sphere{2} \setminus (L_i \cup L_j)$. The two vertices in $L_i \cap L_j \cap \closure{U}$ will be called the corners of the eyes, and any other vertex on the boundary of $U$ is said to be an \emph{eyelash}. Figure~\ref{f:eye} shows an example of an eye between two loops $L_i$ and $L_j$.

\begin{proposition}
	\label{p:black_and_white_domains_inside_eye}
	\label{p:odd_number_of_eyelashes}
	Let $U$ be an eye between two loops $L_i$ and $L_j$. Then:
	\begin{enumerate}
		\item the numbers $F_B(U)$ and $F_W(U)$ are equal,
		\item the two domains on either corners of the eye $U$ have different colors,
		\item the number of eyelashes of $U$ on $L_i$ (res. on $L_j$) is odd.
	\end{enumerate}
\end{proposition}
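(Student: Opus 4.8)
The plan is to run the same angle-counting argument as in the proof of Proposition~\ref{p:black_and_white_domains_on_whole_sphere}, but localized to the eye $U$, and then to extract the two parity statements from it. Fix a fan $v_1, \dots, v_N$ compatible with the action and, as there, write $\theta_{k,l}$ for the non-oriented angle between $v_k$ and $v_l$; for every domain $D$ the corner angles sum to $\theta(D) = 2\pi$, so that $S_B(U) = 2\pi F_B(U)$ and $S_W(U) = 2\pi F_W(U)$, where $S_B(U)$ (resp. $S_W(U)$) is the sum of the $\theta(D)$ over the black (resp. white) domains contained in $U$.

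First I would compare $S_B(U)$ and $S_W(U)$ term by term, sorting the vertices of $\Gamma$ lying in $\closure{U}$ into three types. An interior vertex of $U$ is a crossing of two loops all four of whose quadrants lie in $U$; two are black and two are white, so it contributes equally to $S_B(U)$ and $S_W(U)$. An eyelash, say on $L_i$, is a crossing of $L_i$ with some other loop $L_k$; exactly two of its quadrants lie on the $U$-side of $L_i$, and they are separated by the edge of $L_k$ entering $U$, hence have different colours, so again the contributions to $S_B(U)$ and $S_W(U)$ are equal. The only asymmetry comes from the two corners $A, B$: at each of them only the single eye-quadrant lies in $U$, contributing the angle $\theta_{i,j}$ to whichever of $S_B(U), S_W(U)$ matches the colour of the corresponding corner domain $D_A$ (resp. $D_B$). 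Thus $S_B(U) - S_W(U)$ equals $2\theta_{i,j}$, $0$, or $-2\theta_{i,j}$ according as $D_A, D_B$ are both black, of different colours, or both white.

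The crux is then a short integrality observation which yields (2). Since $F_B(U) - F_W(U) = (S_B(U) - S_W(U))/2\pi$ is an integer, while $\theta_{i,j}/\pi$ lies strictly in $(0,1)$ (the vectors $v_i, v_j$ are linearly independent, being part of an adapted basis at the crossing, so $0 < \theta_{i,j} < \pi$), the values $\pm 2\theta_{i,j}$ are impossible. Hence $D_A$ and $D_B$ must have different colours, which is (2); and then $S_B(U) - S_W(U) = 0$, giving $F_B(U) = F_W(U)$, which is (1). So (1) and (2) fall out together from the same computation.

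Finally, (3) follows from a colouring argument along the boundary. I would traverse the arc of $L_i$ bounding $U$ from the corner $A$ to the corner $B$, staying just inside $U$, and record the sequence of domains of $U$ adjacent to this arc: it starts at $D_A$, ends at $D_B$, and changes exactly at the eyelashes on $L_i$, each such crossing flipping the colour since it amounts to crossing an edge of another loop. Therefore the number of eyelashes on $L_i$ has the parity of the statement ``$D_A$ and $D_B$ have different colours'', which by (2) is odd; the same walk along the arc of $L_j$ handles the eyelashes on $L_j$. The main obstacle is the bookkeeping in the second paragraph—verifying at each vertex type precisely which quadrants fall inside $U$ and pairing their colours correctly—after which the integrality remark makes (1) and (2) immediate and the boundary walk gives (3).
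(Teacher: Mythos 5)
Your proof is correct and takes essentially the same route as the paper's: the localized angle count $S_B(U)-S_W(U)$ with the two corners as the only asymmetric contributions, the integrality observation combined with $0<\theta_{i,j}<\pi$ forcing the corner domains to have different colours (hence (2) and then (1)), and the colour-flipping walk along each bounding arc for the eyelash parity (3). The paper's version is merely terser; your bookkeeping of the three vertex types and the explicit boundary walk supply exactly the details it leaves implicit.
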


\begin{proof}
	The proof is the same as for Proposition~\ref{p:black_and_white_domains_on_whole_sphere}. Each intersection point in $\closure{U}$ provides the same terms in $S_B(U)$ and $S_W(U)$, except the two corners of the eye. If the two corner domains of $U$ had the same color, then one would have
	\[ 2 \theta_{i,j} = \abs{S_B(U) - S_W(U)} = 2\pi \abs{F_B(U) - F_W(U)}, \]
	which contradicts the fact that $\theta_{i,j}$ lies in $(0, \pi)$ (indeed, the cones in the fan associated to a hyperbolic domain must be strongly convex). It follows that the two corners have different colors. This implies $F_B(U) = F_W(U)$ and that the there is an odd number of eyelashes on each loop.
\end{proof}

We saw in Theorem~\ref{t:number_of_domains_dimension_n} that the number of domains $F$ of the action is a multiple of $4$. By Proposition~\ref{p:graph_is_4_valent}, it follows that the number of fixed points $V$ is equal to 2 modulo 4. If it is difficult to link directly the number $N$ of loops defined by the one-dimensional orbits of the action to the other constants of the graph, it is still possible to obtain some relations between the modulo classes of these numbers.

First, let us consider the number of fixed points along the closure $L_i$ of some one-dimensional orbit:

\begin{proposition}
	\label{p:vertices_on_loop_divisble_by_4}
	\label{p:vertices_inside_circle_is_odd}
	Consider a loop $L_i$. The following properties hold:
	\begin{enumerate}
		\item the number of vertices on $L_i$ is a multiple of $4$,
		\item the number of vertices in a connected component $U$ of $\sphere{2} \setminus L_i$ is odd.
	\end{enumerate}
\end{proposition}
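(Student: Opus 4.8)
The plan is to reduce both assertions to a single parity statement by an Euler-characteristic computation on one side of $L_i$, and then to settle the harder of the two using the black--white colouring together with the eyelash count of Proposition~\ref{p:odd_number_of_eyelashes}.

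First I would record a combinatorial identity on a component $U$ of $\sphere{2}\setminus L_i$. Since $L_i$ is a simple closed curve, $\closure{U}$ is a closed disk and the orbit decomposition restricts to a cell decomposition of it. Write $V_{\mathrm{int}}$ for the number of vertices in the interior of $U$ (the quantity in (2)), and recall that $v_i$ denotes the number of vertices on $L_i$. Each interior vertex has degree $4$, while a vertex on $\boundary{U}=L_i$ has degree $3$ in $\closure{U}$: two edges run along $L_i$, and a single edge of the crossing loop enters $U$. The handshake lemma gives $2E=4V_{\mathrm{int}}+3v_i$, so $v_i$ is even and $E=2V_{\mathrm{int}}+\tfrac32 v_i$. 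Substituting into $V-E+F_U=\chi(\closure{U})=1$, with $F_U$ the number of domains contained in $U$, yields $F_U=1+V_{\mathrm{int}}+\tfrac12 v_i$. By Proposition~\ref{p:black_and_white_domains_inside_circle} the colours balance on $U$, so $F_U=2F_B(U)$ is even, whence $V_{\mathrm{int}}+\tfrac12 v_i$ is odd. Thus $V_{\mathrm{int}}$ is odd if and only if $\tfrac12 v_i$ is even; in other words (1) and (2) are equivalent, and it suffices to prove (1).

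To prove (1) I would use the alternating colouring along $L_i$. Travelling once around $L_i$, the domain of $U$ adjacent to the current arc changes colour at each vertex, because the two domains on either side of that vertex are separated by the entering edge of the crossing loop, hence are adjacent and differently coloured. Consequently the $v_i=2a$ vertices inherit an alternating parity, and the crossing loops pair them: the arcs of the loops meeting $L_i$, intersected with $U$, form $a$ disjointly embedded arcs with endpoints on $L_i$, i.e. a chord diagram matching the vertices. For a single such arc $\alpha$ joining vertices $p$ and $q$, comparing the colour of the domain flanking $\alpha$ at its two ends shows that the flanking colour changes exactly $k_\alpha$ times, where $k_\alpha$ is the number of eyelashes (interior vertices) on $\alpha$; tracking this against the alternating arc colouring gives that $p$ and $q$ have the \emph{same} parity precisely when $k_\alpha$ is odd. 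Granting this for every arc, the odd-parity vertices are matched only among themselves, so their number $a$ is even; hence $v_i=2a\equiv 0\pmod 4$, which is (1), and (2) then follows from the equivalence above.

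The main obstacle is exactly the claim that every excursion arc carries an odd number of eyelashes. When $\alpha$ bounds a genuine eye, i.e. a two-cornered component of $\sphere{2}\setminus(L_i\cup L_j)$, this is Proposition~\ref{p:odd_number_of_eyelashes}: the two corner domains must have opposite colours, since otherwise $\abs{S_B-S_W}$ would equal $2\theta_{i,j}\notin 2\pi\integerset$. The difficulty is that two loops may meet in more than two points, so the complementary region flanking $\alpha$ can have more than two corners; then the angle-balancing argument no longer forces the colours at the two ends of $\alpha$ to differ, and the per-arc parity must be obtained differently. I expect the way through is to combine the congruence $k_\alpha\equiv(\text{eyelashes on the opposite, }L_i\text{-bounding arc})\pmod 2$ --- which holds because any other loop meets the closed curve formed by $\alpha$ and an arc of $L_i$ an even number of times --- with a global summation of the colouring identity over all arcs, so that only the total parity, rather than each individual $k_\alpha$, needs to be controlled.
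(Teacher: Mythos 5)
Your strategy coincides with the paper's: the equivalence of (1) and (2) that you derive from the handshake identity $2E=4V_{\mathrm{int}}+3V(L_i)$, Euler's formula on the disk $\closure{U}$, and the colour balance of Proposition~\ref{p:black_and_white_domains_inside_circle} is exactly the computation the paper uses to deduce (2) from (1); and for (1) the paper likewise puts an alternating $\pm$ labelling on the vertices of $L_i$ and matches them into same-sign pairs using the eyelash parity of Proposition~\ref{p:odd_number_of_eyelashes}. All of that part of your write-up is correct.

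The gap is the one you name yourself, and it is genuine. Your matching is by excursion arcs of the crossing loops into one side of $L_i$, and the key claim --- that the two endpoints of such an arc $\alpha\subset L_j$ have the same sign, equivalently that $\alpha$ carries an odd number of interior vertices --- is only established when the component of $\sphere{2}\setminus(L_i\cup L_j)$ flanking $\alpha$ is a genuine two-cornered eye, i.e.\ when $\alpha$ is innermost in the chord diagram. For an outer arc the flanking region is a $2k$-gon with $k\geq 2$, and the angle-balancing identity $S_B-S_W=\theta_{i,j}\,(n_B-n_W)\in 2\pi\integerset$ no longer forces $n_B=n_W$ (for instance $n_B-n_W=\pm 4$ is consistent with $\theta_{i,j}=\pi/2$), so the per-arc parity does not follow; the ``global summation'' you gesture at in the last paragraph is not carried out, and as written the proof of (1), and hence of (2), is incomplete whenever some loop meets $L_i$ in more than two points. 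You should know, though, that the paper's own proof is no more detailed at precisely this point: it says each crossing loop ``forms an eye'' with $L_i$, applies Proposition~\ref{p:odd_number_of_eyelashes} to conclude that its two corners have the same sign, and asserts that the positive vertices therefore come in pairs, without explaining how the eyes produce a perfect matching of $L_i\cap L_j$ when these loops meet in more than two points. So you have not overlooked an idea that the paper supplies; you have surfaced a subtlety the paper elides, and closing it (say by an induction on the nesting depth of the arcs, or by the global parity count you propose) requires an argument that neither text currently contains.
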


\begin{proof}
	For topological reasons, any other loop $L_j$ has to cross $L_i$ an even number of times, so the number of intersection points on the loop $L_i$ is even. Then it is possible to associate to each vertex on $L_i$ a sign $+$ or $-$ such that two consecutive vertices along $L_i$ have opposite signs (as in Figure~\ref{f:circle}). The number of vertices on the loop is twice the number of vertices with positive sign. But any loop $L_j$ crossing $L_i$ form an eye between them. By Proposition~\ref{p:odd_number_of_eyelashes}, this eye has an odd number of eyelashes on $L_j$, so its corners have same sign. It follows that vertices with positive sign come in pairs, and then the number of vertices on $L_i$ is a multiple of $4$.
	
	For a subset $U \subset \sphere{2}$, denote by $V(U)$ (res. $E(U)$, $F(U)$) the number of vertices (res. edges, faces) of $\Gamma$ that lie in $U$. If $U$ is a connected component of $\sphere{2} \setminus L_i$, the restriction $\Gamma'$ of the embedded graph $\Gamma$ to $\closure{U}$ is also a planar graph with $V(\closure{U}) = V(U) + V(L_i)$ vertices, $E(\closure{U})$ edges and $F(U) + 1$ faces. In particular those numbers have to satisfy the Euler relation
	\[ (V(U) + V(L_i)) - E(\closure{U}) + (F(U) + 1) = 2. \]
	Now remark that the vertices of $\Gamma'$ in $U$ are $4$-valent, and the ones on $L_i$ are $3$-valent (see Figure~\ref{f:circle}). In similar way as in the proof of Proposition~\ref{p:graph_is_4_valent}, one can prove the relation
	\[ 4V(U) + 3V(L_i) = 2 E(\closure{U}). \]
	Since we showed that $V(L_i)$ is a multiple of $4$, it follows that $E(\closure{U})$ is even. But Proposition~\ref{p:black_and_white_domains_inside_circle} implies that $F(U)$ is even, so finally
	\[ V(U) = 1 - V(L_i) + E(\closure{U}) - F(U) \]
	is odd.
\end{proof}

Now we are able to prove the following result involving the number $N$ of loops defined by the totally hyperbolic action of $\realset^2$ on the sphere.

\begin{theorem}
	\label{t:N_V_F_equivalence}
	Let $\rho : \realset^2 \times \sphere{2} \rightarrow \sphere{2}$ be a totally hyperbolic action of $\realset^2$ on the 2-dimensional sphere with $V$ fixed points and $F$ hyperbolic domains. Denote by $N$ the number of loops on $\sphere{2}$ defined by the 1-dimensional orbits of $\rho$.
	Then the following equivalent assertions hold:
	\begin{enumerate}
		\item $N$ is odd,
		\item $V$ is equal to $2$ modulo $4$,
		\item $F$ is a multiple of $4$.
	\end{enumerate}
\end{theorem}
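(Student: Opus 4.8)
The plan is to treat the three assertions as a short chain, handling the easy link first and then concentrating on the genuinely combinatorial equivalence. The implication $(2)\Leftrightarrow(3)$ is immediate: Proposition~\ref{p:graph_is_4_valent} gives $F=V+\chi(\sphere{2})=V+2$, so $F\equiv 0\pmod 4$ holds exactly when $V\equiv 2\pmod 4$. (Note moreover that by Theorem~\ref{t:number_of_domains_dimension_n} and Remark~\ref{r:number_of_domains_2-sphere} one already knows that $F$ is a multiple of $8$, so assertions $(2)$ and $(3)$ are in fact always satisfied; the substance of the theorem is therefore to tie the parity of $N$ to this.) It remains to prove $(1)\Leftrightarrow(2)$.

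First I would set up a count of $V$ modulo $4$. Since the one-dimensional orbits glue into embedded loops $L_1,\dots,L_N$ meeting transversally, and since two closed curves on $\sphere{2}$ always intersect in an even number of points, each $n_{ij}:=\card(L_i\cap L_j)$ is even; write $n_{ij}=2m_{ij}$. As every vertex is the crossing of exactly two distinct loops, $V=\sum_{i<j}n_{ij}=2\sum_{i<j}m_{ij}$, whence
\[ V\equiv 2\,\card\set{(i,j):i<j,\ n_{ij}\equiv 2\!\!\pmod 4}\pmod 4. \]
Thus $(2)$ is equivalent to the assertion that the number of pairs with $n_{ij}\equiv 2\pmod 4$ is odd. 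I would encode these pairs as the edges of an auxiliary graph $G$ on the vertex set $\set{1,\dots,N}$. By Proposition~\ref{p:vertices_on_loop_divisble_by_4}(1) the number $d_i=\sum_{j\neq i}n_{ij}$ of vertices lying on $L_i$ is divisible by $4$; since $d_i\equiv 2\deg_G(i)\pmod 4$ (each $G$-edge contributes $n_{ij}\equiv 2$ and the other terms $\equiv 0$), every vertex of $G$ has even degree. Proving $(1)\Leftrightarrow(2)$ then amounts to showing that the number of edges of this even graph $G$ has the same parity as its number $N$ of vertices.

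The even-degree property alone does not force $\card E(G)\equiv N\pmod 2$ (a triangle together with an isolated vertex is a counterexample), so the spherical structure has to enter. The idea is to feed in Proposition~\ref{p:vertices_inside_circle_is_odd}(2) --- each connected component of $\sphere{2}\setminus L_i$ contains an odd number of vertices --- together with the black-and-white colouring of Proposition~\ref{p:black_and_white_domains_on_whole_sphere}, which records exactly the way the loops are nested on the sphere and which is responsible for $F$ being a multiple of $8$. Concretely I would try to read off, for each loop $L_i$, the parity of the number of its ``odd partners'' (the $j$ with $n_{ij}\equiv 2\pmod 4$) from the side-by-side parities provided by those propositions, and then sum these local data over all loops so as to recover $\card E(G)$ and $N$ at the same time. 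An alternative packaging is to pass to the quadrangulation dual to $\Gamma$ (its faces are quadrilaterals because $\Gamma$ is $4$-valent, and its vertex bipartition is precisely the black/white colouring), in which each loop becomes an edge-cut; the $N$ loops then partition all edges into $N$ bonds, and one wants the parity of the number of bonds.

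The main obstacle is precisely this last step: passing from the local, per-loop and per-component parities to the single global invariant $N\bmod 2$. Because removing a loop destroys the hypotheses (the remaining loops need no longer satisfy $d_i\equiv 0\pmod 4$, and the pieces of $\sphere{2}\setminus L_i$ become arcs rather than closed loops), a naive induction on $N$ fails and one must argue globally. I expect the decisive input to be the nesting/colouring structure --- equivalently the strong convexity of the fan cones, which is what makes $\theta_{ij}\in(0,\pi)$ and forced the corner-colour alternation used in Proposition~\ref{p:odd_number_of_eyelashes} --- rather than any purely graph-theoretic identity.
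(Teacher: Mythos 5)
Your handling of $(2)\Leftrightarrow(3)$ and your observation that Remark~\ref{r:number_of_domains_2-sphere} already forces $(2)$ and $(3)$ to hold agree with the paper. The reduction of $(1)\Leftrightarrow(2)$ to the claim that the number of unordered pairs $\set{i,j}$ with $\card(L_i\cap L_j)\equiv 2 \pmod 4$ has the same parity as $N$ is also correct, as is the deduction from Proposition~\ref{p:vertices_on_loop_divisble_by_4}(1) that your auxiliary graph $G$ has all degrees even. But the proof stops exactly at the decisive step: you never establish that $\card E(G)\equiv N\pmod 2$. You candidly note that even degrees alone do not imply this (triangle plus isolated vertex), you propose two possible packagings (``read off per-loop parities and sum'', ``pass to the dual quadrangulation''), and you then state that you do not see how to pass from the local parities to the global invariant $N\bmod 2$. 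That missing step is the entire content of the equivalence $(1)\Leftrightarrow(2)$; as written, the proposal is an unfinished plan, not a proof.

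For comparison, the paper closes this gap by a double count over a different incidence structure. It partitions the $V$ vertices into $V/2$ pairs, each pair being the two corners of an eye; for each loop $L_i$ it fixes one component $U_i$ of $\sphere{2}\setminus L_i$ and sets $P_i$ equal to the set of pairs separated by $L_i$. Proposition~\ref{p:vertices_inside_circle_is_odd}(2) (each $U_i$ contains an odd number of vertices) gives that every $\card(P_i)$ is odd, while Proposition~\ref{p:odd_number_of_eyelashes}(3) (odd number of eyelashes on each side of an eye) gives that every pair is separated by an odd number of loops. Summing the $N\times(V/2)$ incidence matrix both ways yields $N\equiv\sum_i\card(P_i)=\sum_j\card(L(p_j))\equiv V/2\pmod 2$. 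The lesson for your approach is that a single family of per-loop parities is not enough: you need a second, ``dual'' family of per-vertex (or per-pair) parities so that a double count can convert the local data into the global statement. If you want to salvage your formulation via $G$, you would have to produce the analogue of the paper's per-pair oddness statement; without it the argument does not close.
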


\begin{proof}
	\begin{figure}
		\centering
		\def\svgwidth{0.8\textwidth}
		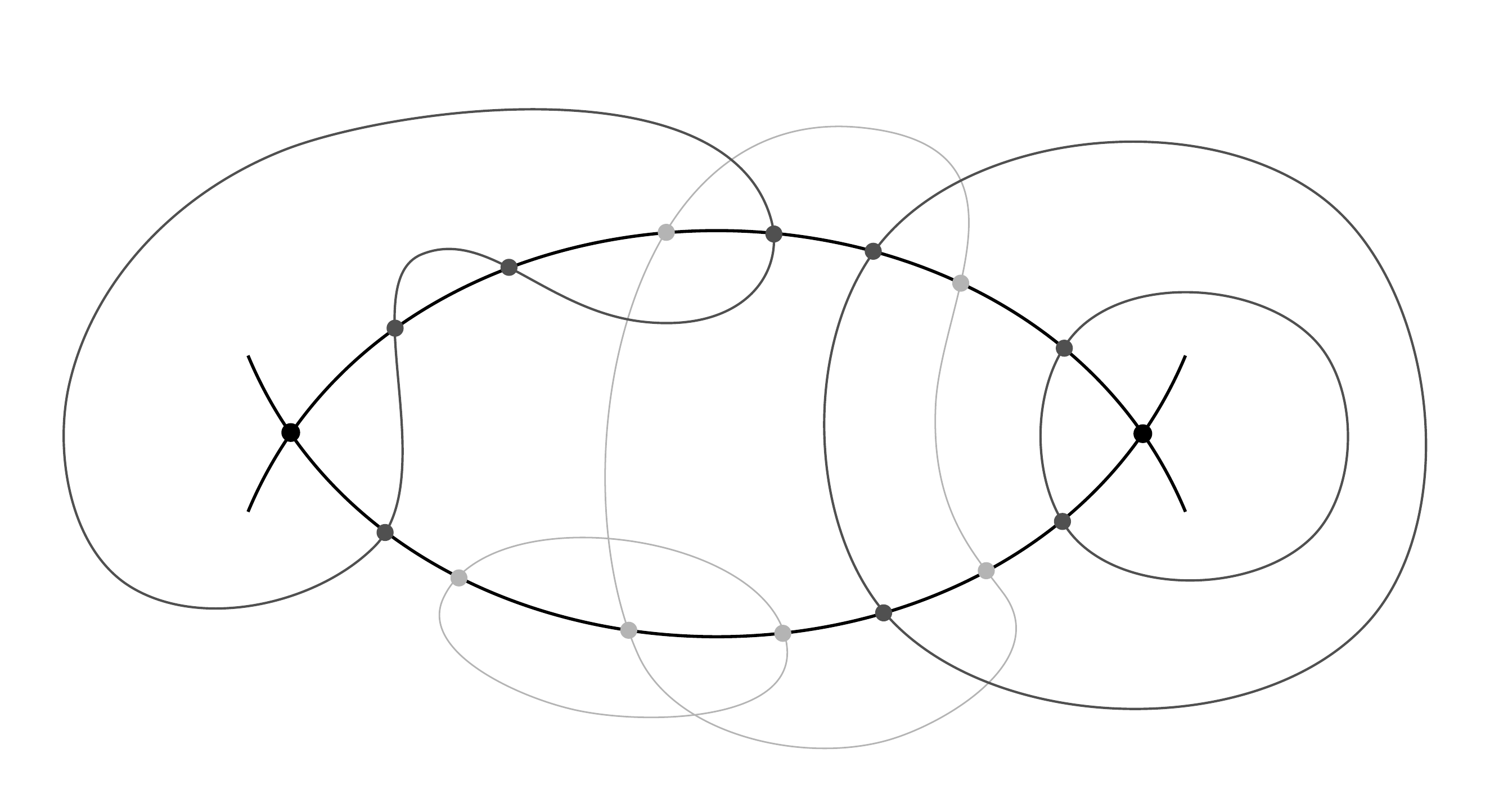
		\caption{Three loops $L_{k_1}, L_{k_2}, L_{k_3}$ separating a pair $p=\pair{v}{v'}$.}
		\label{f:loops_separating_pair}
	\end{figure}
	
	The third assertion follows from Theorem~\ref{t:number_of_domains_dimension_n}. It is equivalent to assertion (2) by the identity $F = V + 2$ (Proposition~\ref{p:graph_is_4_valent}). It suffices now to prove that these two assertions are also equivalent to (1). To achieve this, we will consider a partition of the set of vertices into a set of pairs $P = \set{p_1, \dots, p_{V/2}}$ satisfying the following condition: in any pair $p_j = \pair{v}{v'}$, the vertices $v$ and $v'$ are the corners of some eye between two loops. For $1 \leq i \leq N$, fix $U_i$ a connected component of $\sphere{2} \setminus L_i$, and consider the subset of pairs:
	\[ P_i = \set{\pair{v}{v'} \in P \text{ pair} \mid \set{v, v'} \cap U_i \text{ contains exactly one element} }. \]
	First, remark the following. A vertex $v$ in $U_i$ is either a member of a pair in $P_i$, or a member of a pair $\pair{v}{v'}$ with $v' \in U_i$, the two cases being mutually exclusive. It follows that $U_i \setminus P_i$ contains an even number of vertices. By Proposition~\ref{p:vertices_inside_circle_is_odd}, $U_i$ contains an odd number of vertices, and then $P_i$ contains an odd number of pairs. Then, remark also the following ``dual'' result. Consider a pair $p = \pair{v}{v'}$ in $P$ and the associated eye $V$ between two loops $L_i$ and $L_j$. For any other loop $L_k$, either $v$ and $v'$ belong in distinct connected components of $\sphere{2} \setminus L_k$ (i.e. $p \in P_k$) and then $L_k$ creates an odd number of eyelashes on both $L_i$ and $L_j$, or $v$ and $v'$ are in a same connected component of $L_k$ (i.e. $p \notin P_k$) and then $L_k$ create an even number of eyelashes on both $L_i$ and $L_j$ (see Figure~\ref{f:loops_separating_pair}). Since, according to Proposition~\ref{p:odd_number_of_eyelashes}, the number of eyelashes is odd on both $L_i$ and $L_j$, the set
	\[ L(p) = \set{ L_k \mid p \in P_k} \]
	of loops ``separating'' the pair $ p = \pair{v}{v'}$ has odd cardinality.
	
	These two remarks are enough to prove the equivalence between the above assertions. Indeed, define a $N \times (V/2)$ matrix $(a_{i,j})$ by 
	\[ a_{i,j} = 
	\begin{cases}
	1 &\text{if } p_j \in L_i, \\
	0 &\text{if } p_j \notin L_i.
	\end{cases} \]
	We then have the relation 
	\[ \sum_{i=1}^{N} \card(P_i) = \sum_{i=1}^{N} \sum_{j=1}^{V/2} a_{i,j} = \sum_{j=1}^{V/2} \sum_{i=1}^{N} a_{i,j} = \sum_{j=1}^{V/2} \card L(p_j). \]
	Calling $S$ this sum and using that the $\card(P_i)$ and $\card(L(p_j))$ are all odd, one concludes that:
	\[ N \text{ is odd} \iff S \text{ is odd} \iff \frac{V}{2} \text{ is odd} \iff V = 2\mod{4}. \]
\end{proof}

We can show that Condition (1) on $N$ in the above theorem is optimal, in the sense that we can construct examples of totally hyperbolic actions with $N$ loops for any odd number $N$.

\begin{proposition}
	Let $N$ be an odd number, $N \geq 3$. There exists a totally hyperbolic action $\rho$ of $\realset^2$ on the sphere $\sphere{2}$ such that the closure of the one-dimensional orbits of $\rho$ define exactly $N$ loops.
\end{proposition}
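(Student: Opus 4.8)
The plan is to argue by induction on the odd integer $N$, starting from a base configuration with $N = 3$ loops and then adding two loops at a time, so that every odd value $N \geq 3$ is produced. Increasing the loop count by exactly $2$ is precisely what Proposition~\ref{p:adding_spheres_around_fixed_point}(1) accomplishes, so the entire construction reduces to providing one base example and checking that each step genuinely creates two new distinct loops.

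For the base case $N = 3$, I would take the decomposition of $\sphere{2}$ induced by its three ``coordinate'' circles, i.e. the intersections of $\sphere{2} \subset \realset^3$ with the planes $\set{x=0}$, $\set{y=0}$, $\set{z=0}$. These are three simple closed curves meeting pairwise transversally, and they cut $\sphere{2}$ into the $8$ spherical triangles (octants); one checks directly that $V = 6$, $E = 12$, $F = 8$ and that the induced graph is $4$-valent. To realize this as a totally hyperbolic action it suffices, by the converse part of the classification recalled in Section~\ref{s:preliminaries}, to exhibit a compatible complete fan. I would assign to the three loops the three vectors $v_1, v_2, v_3 \in \realset^2$ forming successive angles $2\pi/3$ (a ``Mercedes'' configuration): the three strongly convex $2$-dimensional cones $\langle v_1, v_2 \rangle$, $\langle v_2, v_3 \rangle$, $\langle v_3, v_1 \rangle$ then form a complete fan, and since every octant is bounded by exactly one arc of each of the three circles, its three corners realize exactly these three cones, independently of the cyclic order in which the arcs are traversed. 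This yields an action with exactly $N = 3$ loops; it is the $8$-domain example on $\sphere{2}$ already used in the proof of Proposition~\ref{p:realization_number_of_domains_on_surface}.

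For the inductive step, suppose $\rho$ is a totally hyperbolic action on $\sphere{2}$ with $N$ loops, and pick any fixed point $x$, which is the intersection of two loops $L_i, L_j$. Applying Proposition~\ref{p:adding_spheres_around_fixed_point}(1) produces a totally hyperbolic action whose decomposition is obtained by adding two small concentric circles $S$ and $S'$ centered at $x$. Each of $S$ and $S'$ is a simple closed curve, hence a loop, and by taking their associated vectors $w, w'$ generic --- distinct from one another and from all the $v_k$, which is always possible as noted in the proof of that proposition --- the gluing procedure of Lemma~\ref{l:gluing_n-1_dim_orbits} keeps $S$ and $S'$ as two genuinely distinct loops that merge neither with $L_i$, $L_j$ nor with each other. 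The new action therefore has exactly $N + 2$ loops, and induction starting from $N = 3$ realizes every odd $N \geq 3$.

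The only part requiring real work is the base case: one must confirm that the ``Mercedes'' fan is simultaneously compatible with all eight triangular domains under a single global assignment $L_k \mapsto v_k$. As sketched above this is immediate once one observes that each octant meets each circle in a single arc, so the three corner cones of every octant are always the three cones of the fan. Everything else is formal, since Proposition~\ref{p:adding_spheres_around_fixed_point} supplies the totally hyperbolic action at each step and increases the number of loops by exactly $2$.
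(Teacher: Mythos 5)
Your proposal is correct and follows essentially the same route as the paper: start from the classical $3$-loop, $8$-domain example on $\sphere{2}$ and iterate Proposition~\ref{p:adding_spheres_around_fixed_point}, which adds exactly two loops at each step. The extra detail you supply on the compatibility of the ``Mercedes'' fan with the octant decomposition and on the two added circles being genuinely new, distinct loops is a sound elaboration of what the paper leaves implicit.
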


\begin{proof}
	It is exactly the same proof as for Corollary~\ref{c:augment_domains}. Starting from the classical example of the hyperbolic action of $\realset^2$ on $\sphere{2}$ with 3 loops, iterate the construction defined in Proposition~\ref{p:adding_spheres_around_fixed_point}, which adds exactly two loops to the action.
\end{proof}

\section{Examples of totally hyperbolic actions in dimension 3}
\label{s:in_dimension_3}

In this section, we provide constructive examples of totally hyperbolic actions on two 3-dimensional manifolds: the sphere $\sphere{3}$ and the projective space $\projspace{3}$.

\subsection{On the sphere $\sphere{3}$} Define the 3-dimensional sphere as the subspace
\[ \sphere{3} = \set{ (z_1, z_2) \in \complexset^2 \mid \abs{z_1}^2 + \abs{z_2}^2 = 1 } \subset \complexset^2. \]
Let $f : \sphere{3} \rightarrow \realset$ be the map defined by $f(z_1, z_2) = \abs{z_2}^2 - \abs{z_1}^2$. The zero level set of $f$ is a 2-dimensional torus $\sphere{1} \times \sphere{1}$ embedded in $\sphere{3}$, while the sets $A_1 = \set{f \geq 0}$ and $A_2 = \set{f \leq 0}$ are two solid tori $\closeddisk \times \sphere{1}$ embedded in $\sphere{3}$. This is the usual decomposition of $\sphere{3}$ into two solid tori glued along their common boundary, in such a way that a meridian on the first solid tori $A_1$ is identified with a longitude on the second solid tori $A_2$, and vice versa.

For $k \in \set{1,2}$ and a given slope $\lambda = \slope{a}{b} \in \projspace{1}$, consider the hypersurface $H^k_{\lambda}$ in $\sphere{3}$ consisting of the elements $(z_1, z_2) \in \sphere{3}$ such that $z_k = x_k + i y_k$ is on the real line of slope $\slope{a}{b}$ in $\complexset$. That is, $H^k_{\lambda}$ is the hypersurface defined by the zero level-set of the submersion $(z_1, z_2) \mapsto bx_k - ay_k$. Under the natural identification between the solid tori $A_1, A_2$ and the solid cylinders $C_1, C_2 = \closeddisk \times [0,1]$ in $\realset^3$, remark that: 
\begin{itemize}
	\item $H^k_\lambda \cap A_k$ corresponds to two horizontal closed disks $\closeddisk \times \set{t_1, t_2}$ in $C_k$,
	\item $H^k_\lambda \cap A_{\bar{k}}$ corresponds to a vertical plane $\set{yb - ax = 0} \times [0,1]$ in $C_{\bar{k}}$,
\end{itemize}
where $\{k, \bar{k}\} = \set{1,2}$ (see Figure~\ref{f:domains_S3}). Any two such hypersurfaces $H^k_\lambda$ and $H^l_\mu$ intersect transversally with each other as long as $k \neq l$ or $\lambda \neq \mu$. Moreover, any such surface intersects transversally with the torus $A_1 \cap A_2$, and does not intersect itself. Hence this family of hypersurfaces will be useful to construct decompositions of $\sphere{3}$ into hyperbolic domains of some totally hyperbolic action of $\realset^3$ on $\sphere{3}$.

\begin{figure}
	\centering
	\def\svgwidth{0.8\textwidth}
	\small%
	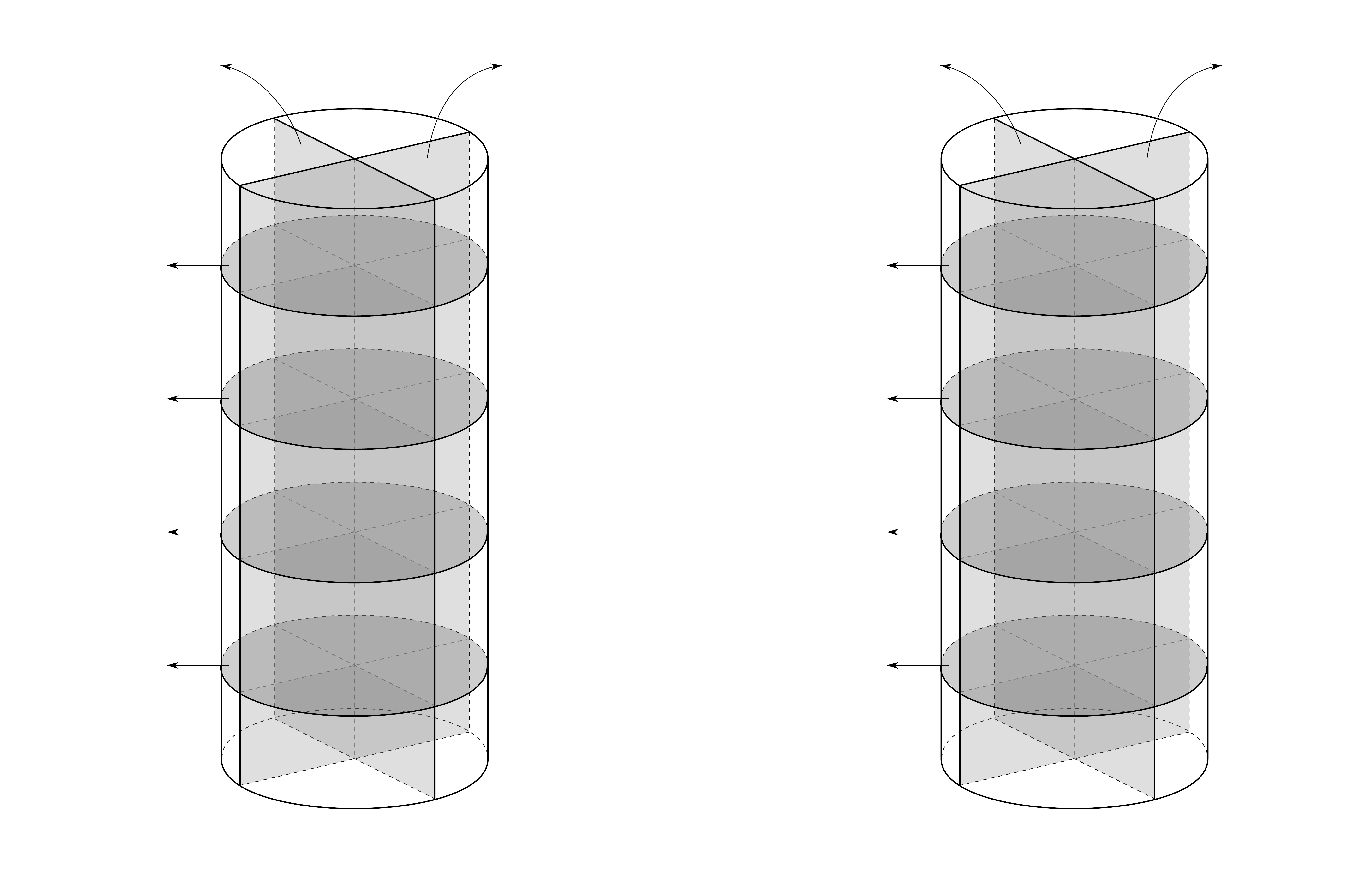
	\caption{A decomposition of $S^3 = A_1 \cup A_2$ into hyperbolic domains}
	\label{f:domains_S3}
\end{figure}

\begin{proposition}
	\label{p:existence_action_S3}
	Let $\lambda, \mu \in \projspace{1}$ be two distinct slopes. Then there exists a totally hyperbolic action $\rho$ of $\realset^3$ on the 3-dimensional sphere $\sphere{3}$ such that the hyperbolic domains of $\rho$ are given by the splitting of $\sphere{3}$ along the torus $H_0 = A_1 \cap A_2$ and the hypersurfaces $H^1_\lambda$, $H^1_\mu$, $H^2_\lambda$ and $H^2_\mu$.
\end{proposition}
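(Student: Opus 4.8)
The plan is to invoke the converse part of the classification recalled at the end of Section~\ref{s:preliminaries}. The five hypersurfaces $H_0, H^1_\lambda, H^1_\mu, H^2_\lambda, H^2_\mu$ are embedded, closed and pairwise transverse, as already observed; so it suffices to produce five vectors $v_0, v^1_\lambda, v^1_\mu, v^2_\lambda, v^2_\mu \in \realset^3$ which span a complete fan and are such that, for every domain $\Ocal$ of the decomposition, the subfamily attached to the hypersurfaces bordering $\closure{\Ocal}$ spans a complete fan compatible with the combinatorics of $\closure{\Ocal}$.

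First I would read off the combinatorics of the decomposition. Writing $\sphere{3}$ through the map $f(z_1,z_2) = \abs{z_2}^2 - \abs{z_1}^2$ as a family of tori $(\arg z_1, \arg z_2) \in \torus{2}$ over $f \in [-1,1]$, with the $\arg z_1$-circle collapsing over $f = 1$ (where $z_1 = 0$) and the $\arg z_2$-circle over $f = -1$, the hypersurface $H_0$ is the central torus $\set{f = 0}$, each $H^i_\nu$ contributes two walls $\arg z_i = \mathrm{const}$, and the distinctness $\lambda \neq \mu$ makes the four $\arg z_1$-walls (and the four $\arg z_2$-walls) distinct and cyclically alternating in type. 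These walls and $H_0$ cut $\sphere{3}$ into $4 \times 4 \times 2 = 32$ pieces, and I would check that each is a curved polytope combinatorially equal to a triangular prism. For a piece inside $A_1 = \set{f \geq 0}$, the coordinates $(\arg z_1, f)$ sweep a triangle whose apex is the collapse of $\arg z_1$ on the core circle $H^1_\lambda \cap H^1_\mu = \set{z_1 = 0}$, while $\arg z_2$ provides the prism axis; its three lateral faces lie on $H_0, H^1_\lambda, H^1_\mu$ (the last two meeting along the degenerate apex edge) and its two triangular faces lie on $H^2_\lambda, H^2_\mu$. Pieces inside $A_2$ are described symmetrically by exchanging the indices $1$ and $2$. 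In particular every domain meets all five hypersurfaces, so the relevant subfamily is always the whole family.

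Next I would choose the vectors so as to accommodate both prism types at once. A piece of $A_1$ requires $v_0, v^1_\lambda, v^1_\mu$ (the normals of its three lateral faces) to be coplanar and to positively span the plane they determine, with $v^2_\lambda, v^2_\mu$ lying on opposite sides of that plane; a piece of $A_2$ requires the same statement with the two pairs interchanged. Both are satisfied if $(v^1_\lambda, v^1_\mu)$ and $(v^2_\lambda, v^2_\mu)$ are placed in two distinct planes meeting along the ray $\realset v_0$. Concretely I would take
\[ v_0 = (1,0,0), \quad v^1_\lambda = (-1,1,0), \quad v^1_\mu = (-1,-1,0), \quad v^2_\lambda = (-1,0,1), \quad v^2_\mu = (-1,0,-1), \]
a choice symmetric under the involution $(x,y,z) \mapsto (x,z,y)$ that swaps the two solid tori. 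Since the origin lies in the interior of the convex hull of these five vectors, they span a complete fan, which disposes of the global hypothesis.

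Finally I would verify the compatibility domain by domain; by the symmetry this reduces to one piece of $A_1$ and one piece of $A_2$. For a piece of $A_1$ the six vertices of $\closure{\Ocal}$ yield the six triples $\set{v_0, v^1_\lambda, v^2_\lambda}$, $\set{v_0, v^1_\lambda, v^2_\mu}$, $\set{v_0, v^1_\mu, v^2_\lambda}$, $\set{v_0, v^1_\mu, v^2_\mu}$, $\set{v^1_\lambda, v^1_\mu, v^2_\lambda}$, $\set{v^1_\lambda, v^1_\mu, v^2_\mu}$; I would check that each triple is linearly independent, so the six cones they span are strongly convex and three-dimensional, and that each of the nine two-dimensional walls of these cones is shared by exactly two of them. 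This certifies that the six cones form a complete fan, and their incidences match the vertex–edge–face incidences of the prism, so the fan is compatible with $\closure{\Ocal}$. The piece of $A_2$ is treated identically after interchanging the pairs. I expect the crux to be exactly this simultaneous compatibility: one fixed assignment of vectors must realize the prism fan of the $A_1$-pieces and that of the $A_2$-pieces, in which the very same two hypersurfaces switch between the triangular and the lateral roles; the placement of the two pairs in two planes sharing the ray $\realset v_0$ is what lets both verifications go through together.
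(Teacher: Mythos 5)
Your proposal is correct and takes essentially the same route as the paper: it reduces the claim to exhibiting one complete fan in $\realset^3$ compatible with all the (combinatorially prism-shaped) domains, and your explicit vectors are exactly the negatives of the ones chosen in the paper, which changes nothing since the compatibility conditions are invariant under a linear automorphism of $\realset^3$. The only difference is that you spell out the combinatorics of the $32$ domains and the prism-fan verification that the paper dispatches by appealing to symmetry.
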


\begin{proof}
	It suffices to exhibit a complete fan in $\realset^3$ compatible with the given decomposition of $\sphere{3}$. Note that, on a combinatoric point of view, all the domains in $A_1$ are equivalent, as well as all the domains in $A_2$ (see Figure~\ref{f:domains_S3}). Moreover, the whole system is symmetric with respect to the permutation of the indices 1 and 2. Consider the following fan:
	\[ v_0 = (-1, 0, 0), \begin{matrix}
 & v^1_\lambda = (1, -1, 0), & v^1_\mu = (1, 1, 0), \\
	& v^2_\lambda = (1, 0, -1), & v^2_\mu = (1, 0, 1).
	\end{matrix} \]
	Since it is compatible with any domain in $A_1$ and is symmetric (up to rotation of the fan) when one permutes $v^1_\cdot$ and $v^2_\cdot$, it is compatible with the whole decomposition of $\sphere{3}$.
\end{proof}

\subsection{On the projective space $\projspace{3}$} The construction above extends immediately to the 3-dimensional projective space $\projspace{3}$. Recall that the later is defined as the quotient of the 3-dimensional sphere $\sphere{3}$ by the free involution $\sigma : (z_1, z_2) \mapsto (- z_1, - z_2)$. The map $f : \sphere{3} \rightarrow \realset$ defined above satisfies $f \circ \sigma = f$, hence the solid tori $A_1$, $A_2$ and their common boundary $A_1 \cap A_2$ are invariant by the action of $\sigma$ on $\sphere{3}$. For any $i \in \set{1,2}$ and $\lambda \in \projspace{1}$, the hypersurface $H^i_\lambda \subset \realset^3$ is also invariant by the action of $\sigma$, and thus projects on a well-defined hypersurface in $\projspace{3}$ by the quotient map $\pi : \sphere{3} \rightarrow \projspace{3}$.

\begin{proposition}
	\label{p:existence_action_RP3}
	Le $\lambda, \mu \in \projspace{1}$ be two distinct slopes. Then there exists a totally hyperbolic action $\rho$ of $\realset^3$ on the 3-dimensional projective space $\projspace{3}$ such that the hyperbolic domains of $\rho$ are given by the splitting of $\projspace{3}$ along the hypersurfaces $\pi(A_1 \cap A_2)$, $\pi(H^1_\lambda)$, $\pi(H^1_\mu)$, $\pi(H^2_\lambda)$ and $\pi(H^2_\mu)$.
\end{proposition}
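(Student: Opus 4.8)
The plan is to realise the action on $\projspace{3}$ by pushing down a $\sigma$-invariant action on $\sphere{3}$ through the double cover $\pi : \sphere{3} \rightarrow \projspace{3}$, exactly as in the non-orientable case of Proposition~\ref{p:realization_number_of_domains_on_surface}. The point is that all the data defining the action of Proposition~\ref{p:existence_action_S3} is already symmetric under $\sigma$, so that this action can be arranged to commute with $\sigma$ and therefore descends.

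First I would check that the action $\rho$ on $\sphere{3}$ can be chosen $\sigma$-equivariant, i.e. $\sigma \circ \rho(g, \cdot) = \rho(g, \cdot) \circ \sigma$ for every $g \in \realset^{3}$. The involution $\sigma$ preserves the solid tori $A_1, A_2$ and the torus $H_0 = A_1 \cap A_2$, and (as recalled just before the statement) it preserves each hypersurface $H^i_\lambda$ setwise. Since all the $(n-1)$-dimensional orbits contained in one hypersurface carry the same associated vector (Lemma~\ref{l:gluing_n-1_dim_orbits}), $\sigma$ also fixes the associated vectors, hence the whole fan $v_0, v^1_\lambda, v^1_\mu, v^2_\lambda, v^2_\mu$. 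Thus the hypersurfaces, the combinatorics of the domains and the fan are all $\sigma$-invariant, and running the converse construction of Section~\ref{s:preliminaries} on a set of representatives of the $\sigma$-orbits of domains and then transporting by $\sigma$ produces a genuinely $\sigma$-invariant action realising the decomposition of Proposition~\ref{p:existence_action_S3}; the transported charts glue because $\sigma$ respects the shared hypersurfaces and their vectors.

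Next I would descend. As $\sigma$ is free and commutes with $\rho$, the action passes to the quotient and defines a smooth action $\bar\rho$ of $\realset^{3}$ on $\projspace{3} = \sphere{3}/\sigma$ with $\pi \circ \rho(g, \cdot) = \bar\rho(g, \cdot) \circ \pi$. Nondegeneracy is a local condition and survives because $\pi$ is a local diffeomorphism. For the toric degree I would compute the isotropy group: if $g$ fixes every point of $\projspace{3}$, then on a lift $\rho(g, \cdot)$ sends each $p$ to $p$ or to $\sigma(p)$, and by connectedness one alternative holds identically; the first gives $g \in Z(\rho) = \set{0}$, while $\rho(g, \cdot) = \sigma$ would force $\rho(2g, \cdot) = \mathrm{id}$, hence $g = 0$ and the absurdity $\sigma = \mathrm{id}$ (equivalently, the torsion-free group $\realset^{3}$ cannot realise the order-two symmetry $\sigma$). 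Hence $\bar\rho$ is totally hyperbolic. Its hyperbolic domains are the $\bar\rho$-orbits $\pi(\Ocal)$ of the domains $\Ocal$ of $\rho$, so they are exactly the pieces into which $\pi(H_0), \pi(H^1_\lambda), \pi(H^1_\mu), \pi(H^2_\lambda), \pi(H^2_\mu)$ cut $\projspace{3}$; moreover, being the hyperbolic domains of the totally hyperbolic action $\bar\rho$, their closures are automatically curved polytopes by the structure theory of Section~\ref{s:preliminaries}, so no separate verification is needed on the quotient.

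The step I expect to be the real obstacle is the $\sigma$-equivariance of the upstairs action: the uniqueness part of the classification only guarantees that $\rho$ and its conjugate $\rho^{\sigma}(g, p) = \sigma(\rho(g, \sigma(p)))$ are intertwined domain by domain, which is weaker than the honest equality $\rho = \rho^{\sigma}$ required for descent, so one must build the action equivariantly from the start rather than merely invoke uniqueness. Once this is secured, everything else is formal transport along the covering. Alternatively one may skip the equivariant construction and feed the projected hypersurfaces, domains and the same fan directly into the converse part of the classification; the cost is then to check by hand, using that $\sigma$ acts on each $A_k \cong \closeddisk \times \sphere{1}$ as a half-turn of the $\sphere{1}$ factor, that $\sigma$ carries each domain to a disjoint non-adjacent one, so that the quotient pieces are again curved polytopes and the $\pi(H^i_\lambda)$ are embedded copies of $\projspace{2}$.
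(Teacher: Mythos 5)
Your proposal takes essentially the same route as the paper: the paper's proof also constructs the action of Proposition~\ref{p:existence_action_S3} to be $\sigma$-invariant from the start (defining it on one domain and extending by the reflection principle, which is your ``build equivariantly on orbit representatives and transport'' step) and then passes to the quotient by the free involution $\sigma$. Your additional verifications (descent of nondegeneracy, vanishing of the toric degree downstairs, identification of the quotient domains) are correct elaborations of what the paper leaves implicit, and you correctly identify that equivariance must be built in rather than deduced from the uniqueness part of the classification.
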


\begin{proof}
	It suffices to remark that, in Proposition~\ref{p:existence_action_S3}, the totally hyperbolic action $\rho$ defined on $\sphere{3}$ can be constructed in such a way that it is symmetric with respect to $\sigma$, by defining it first on any of the domains, and then extending it via the reflection principle. By taking the quotient with respect to $\sigma$, one obtains a totally hyperbolic action on $\projspace{3}$.
\end{proof}

In~\cite{zung2014geometry}, Minh and Zung raise the question of the existence of totally hyperbolic actions on lens spaces. Given two co-prime integers $p$ and $q$, the lens space $\lensspace{p}{q}$ is obtained as the quotient manifold of the 3-dimensional sphere $\sphere{3}$ by the proper and free action of $\modintegerset{p}$ defined by: 
\[ \modclass{1} \cdot (z_1, z_2) = (z_1 \xi, z_2 \xi^q), \text{ where } \xi = \exp({2i\pi}/{p}). \]
In particular, when $(p,q) = (2,1)$, one obtains the 3-dimensional projective space $\projspace{3}$ \cite{hempel1976three}. So Proposition~\ref{p:existence_action_RP3} answers the question in this very specific case. Unfortunately, its constructive proof might not be used for a wider class of lens spaces, since the hypersurfaces $H^i_\lambda$ introduced above are not invariant by the action of $\modintegerset{p}$ on $\sphere{3}$ when $p \geq 3$. Moreover, any $\modintegerset{p}$-invariant hypersurface containing $H^i_\lambda$ contains also $\modclass{1} \cdot H^i_\lambda = H^i_\mu$ with $\mu \neq \lambda$. Therefore it self-intersects and cannot define the boundaries of hyperbolic domains of some hyperbolic action of $\realset^3$ on $\lensspace{p}{q}$.

\bibliography{biblio}

\end{document}